\title{Univalent polymorphism}
\date{\today}
\author{Benno van den Berg}
\address{Institute for Logic, Language and Computation (ILLC), University of Amsterdam, P.O. Box 94242, 1090 GE Amsterdam, the Netherlands. E-mail: B.vandenBerg3@uva.nl.}
\newcommand{\pullback}[2]{{}_{#1}\kern-\scriptspace{\times}_{#2}}
\begin{document}

\begin{abstract}
We show that Martin Hyland's effective topos can be exhibited as the homotopy category of a path category $\mathbb{EFF}$. Path categories are categories of fibrant objects in the sense of Brown satisfying two additional properties and as such provide a context in which one can interpret many notions from homotopy theory and Homotopy Type Theory. Within the path category $\mathbb{EFF}$ one can identify a class of discrete fibrations which is closed under push forward along arbitrary fibrations (in other words, this class is polymorphic or closed under impredicative quantification) and satisfies propositional resizing. This class does not have a univalent representation, but if one restricts to those discrete fibrations whose fibres are propositions in the sense of Homotopy Type Theory, then it does. This means that, modulo the usual coherence problems, it can be seen as a model of the Calculus of Constructions with a univalent type of propositions. We will also build a more complicated path category in which the class of discrete fibrations whose fibres are sets in the sense of Homotopy Type Theory has a univalent representation, which means that this will be a model of the Calculus of Constructions with a univalent type of sets.
\end{abstract}

\maketitle

\section{Introduction}

There has been a continuing interest in obtaining models of impredicative type theories, like the Calculus of Constructions (CoC). One reason is the use of CoC as the basis of proof assistants such as Coq and Lean; on a more theoretical level, these models shed light on issues surrounding impredicativity. Recently, the quest for models has received a new impetus due to the influence of Homotopy Type Theory (HoTT). Some natural questions which have arisen are whether it is possible to have universes which are both impredicative and univalent, how the type Prop of propositions in CoC relates to the notion of proposition in HoTT and whether one can obtain models of Voevodsky's resizing axioms.

So what is needed to model CoC? Roughly, what one needs is a category equipped with two classes of maps, one (``the small fibrations'') contained in the other (``the fibrations''), and where both classes are pullback stable, closed under composition and contain all isomorphisms. One needs a particular small fibration (a ``representation'') such that any other small fibration can be obtained as a pullback of that one; in addition, one needs to be able to push forward fibrations along other fibrations (where push forward is the right adjoint to pullback), and, crucially, one needs that small fibrations are closed under being pushed forward along arbitrary fibrations (this is what is meant by impredicativity in this context).

There is an old idea (see \cite{hylandetal90}) that models of CoC can be obtained by looking at Hyland's effective topos $\Eff$ \cite{hyland82}: the idea is that in this case every map is fibration and the small fibrations are the discrete maps, with a map in the effective topos being discrete if all its fibres are subcountable. It turns out that this does not quite work, due to a certain ambiguity in the notion of a discrete map, as explained in \cite{hylandetal90}. There is a global notion of a discrete map, where one says that a map $f: Y \to X$ in $\Eff$ is discrete if $f$ can be covered by a subobject of the natural numbers object $\pi_X: \mathbb{N} \times X \to X$ in $\Eff/X$ (this is how discrete maps are defined in \cite[Definition 3.4.2(i)]{vanoosten08}). In this case the discrete maps have a representation, but are not stable under push forward along arbitrary maps (see \cite[Proposition 7.5]{hylandetal90}). Alternatively, there is the local notion of a discrete map, where one says that a map $f: Y \to X$ is discrete if there is an epi $q: X' \to X$ such that $q^*f$ is covered by a subobject of the natural numbers object $\pi_{X'}: \mathbb{N} \times X' \to X'$ in $\Eff/X'$ (this is equivalent to saying that, internally, $f$ has the right lifting property with respect to $\Omega \to 1$ or $\nabla(2) \to 1$). In this case the discrete maps are stable under push forward along arbitrary maps, but it is still an open question whether the discrete maps defined in this local manner have a representation in $\Eff$. 

The standard way of solving this problem is to restrict to the $\lnot\lnot$-separated objects in the effective topos (the assemblies; see also \cite{carbonietal88,hyland88}). Within these assemblies the distinction between the two notions of a discrete map disappears (see \cite[Proposition 6.7]{hylandetal90}) and the discrete maps have a representation and are closed under push forward along arbitrary maps. This means that within the assemblies one can obtain a model of CoC, and to this day this remains the ``standard'' model for CoC.

This paper suggests an alternative solution. The first step in this direction is the observation that the the effective topos arises as the homotopy category of a certain path category, which we will denote $\mathbb{EFF}$. 

The notion of a path category, short for a category with path objects, was first introduced in \cite{bergmoerdijk18} and provides an abstract categorical setting for doing homotopy theory, in a manner similar to Quillen's model categories. Path categories are categories equipped with two classes of maps, fibrations and equivalences, satisfying a number of axioms which strengthen Ken Brown's axioms for a category of fibrant objects \cite{brown73} (in that sense the notion of a path category is similar to Joyal's notion of a tribe \cite{joyal17}, with the notion of a tribe being the stronger of the two). From a type-theoretic perspective, path categories provide models of ``propositional identity types'' (models of Martin-L\"of's identity types but with the computation rule formulated as a propositional equality; see \cite{vandenberg18}). The key notions from Homotopy Type Theory such as transport, univalence and hlevel, make sense in any path category, and therefore these notions make sense in $\mathbb{EFF}$ as well. 

It turns out that in $\mathbb{EFF}$ one can define a notion of discrete fibration which is stable under push forward along arbitrary fibrations. It is still unclear if the class of discrete fibrations has a representation in $\mathbb{EFF}$; however, if we restrict the hlevel of its fibres to propositions in the sense of HoTT, the discrete fibrations will have a representation and will still be be closed under push forward along arbitrary fibrations. In fact, its representation is essentially the subobject classifier in the effective topos and this representation will be univalent in $\mathbb{EFF}$. That means that with the class of propositional discrete fibrations as the small fibrations, $\mathbb{EFF}$ is a model of CoC with a univalent Prop. Given that the homotopy category of $\mathbb{EFF}$ is the effective topos, this may not be so surprising. However, the author of this paper thinks it is quite satisfying to see the notion of proposition from CoC, HoTT and the effective topos match up nicely like this. In addition, it is interesting to note that $\mathbb{EFF}$ is a model of propositional resizing. (This should be compared with the negative result due to Taichi Uemura \cite{uemura18}, showing that it fails in cubical assemblies.)

One way in which the model in $\mathbb{EFF}$ is somewhat poor is that its universe only contains propositions (in the sense of HoTT) and therefore excludes many data types like $\mathbb{N}$ or $\mathbb{N} \to \mathbb{N}$ (in that it differs from the standard model in assemblies). The difficulty is that these objects are not propositions, but actually live one hlevel higher: that is, they are sets in the sense of HoTT. For that reason I will also construct a more complicated path category  $\mathbb{EFF}_1$ in which the class of fibrations of discrete sets is an impredicative class of small fibrations with a univalent representation. This univalent universe will also contain such objects as $\mathbb{N}$ or $\mathbb{N} \to \mathbb{N}$.

There is a clear sense in which the construction of $\mathbb{EFF}_1$ is the natural next step after the construction of $\mathbb{EFF}$. The natural next step after that would be the construction of a path category $\mathbb{EFF}_2$ in which the class of discrete fibrations of groupoids would be closed under push forward along arbitrary fibrations and would have a univalent representation. In the limit one would expect to be able to construct a path category $\mathbb{EFF}_\infty$ in which the class of discrete fibrations (without any restriction on its hlevel) would be closed under push forward along arbitrary fibrations and have a univalent representation. This would provide an alternative way of salvaging the original idea of obtaining models of CoC using discrete fibrations.

At this point I should make two important disclaimers. Firstly, when I say that $\mathbb{EFF}$ and $\mathbb{EFF}_1$ are models of CoC, I am ignoring the usual coherence problems related to substitution. I suspect it would be relatively straightforward to solve this by some careful rephrasing in terms of categories with families \cite{dybjer96,hofmann97b}, but I am leaving this to future work. Secondly, and this may be the more essential point, $\mathbb{EFF}$ and $\mathbb{EFF}_1$ will be models of a version of CoC in which many definitional equalities have been replaced by propositional ones: this applies not just to the computation rule for the identity type, but to many computation rules. In fact, I am imagining a version of CoC in which the notion of definitional equality has been completely eliminated in favour of the notion of propositional equality. I am currently working on writing down a precise syntax for such a system, but this is still work in progress.  I suspect that a reader familiar with the categorical semantics of type theory can guess what it would look like on the basis of the categorical definitions in Section 2 below.

Throughout this paper we will rely on categorical language: familiarity with the language of type theory will not be assumed.

Finally, unless explicitly noted otherwise, the results obtained here are all constructive. However, since we will be interested in building models of impredicative type theories, our metatheory will be impredicative as well, which means that our metatheory will be something like intuitionistic Zermelo-Fraenkel set theory ({\bf IZF}) or the internal logic of an elementary topos with a natural numbers object (both can be seen as subsystems of {\bf ZF}). Results which rely on the axiom of choice will be marked with the label {\bf (ZFC)}.

\section{Path categories}

We will start this paper by discussing the notion of a path category, as introduced in \cite{bergmoerdijk18}, and by showing how many of the concepts in Homotopy Type Theory make sense in the context of a path category. The reader who wishes to see more details should consult Section 2 of \cite{bergmoerdijk18}.

\subsection{Definition} Let me first give the definition.

\begin{defi}{pathcat}
A \emph{path category} (short for a category with path objects) is a category \ct{C} equipped with two classes of maps, called \emph{fibrations} and \emph{equivalences}, respectively. A fibration which is also an equivalence will be called \emph{trivial}. If $X \to PX \to X \times X$ is a factorisation of the diagonal on $X$ as an equivalence followed by a fibration, then $PX$ is a \emph{path object} for $X$. In this case, we will often denote the equivalence $X \to PX$ by $r$ and the fibration $PX \to X \times X$ by $(s, t)$. In a path category the following axioms should be satisfied:
\begin{enumerate}
\item Isomorphisms are fibrations and fibrations are closed under composition.
\item \ct{C} has a terminal object 1 and $X \to 1$ is always a fibration.
\item The pullback of a fibration along any other map exists and is again a fibration.
\item Isomorphisms are equivalences and equivalences satisfy 6-for-2: that is, if $f, g, h$ are maps for which the composition $hgf$ is defined and for which $hg$ and $gf$ are equivalences, then $f,g,h$ and $hgf$ are equivalences as well.
\item Every object $X$ has at least one path object.
\item Trivial fibrations are stable under pullback along arbitrary maps.
\item Trivial fibrations have sections.
\end{enumerate}
\end{defi}

\begin{rema}{onequivalences} In \cite{vandenberg18,bergmoerdijk18} the equivalences were called weak equivalences. Since every object in a path category is both fibrant and cofibrant (to use the language of model categories) and we have \reftheo{eqishomeq} below, there does not really seem to be much need for the adjective ``weak'', so we have decided to drop it. Indeed, the HoTT book \cite{univalent13} simply calls this class of maps equivalences, without the adjective.
\end{rema}

A basic fact about path categories is that they are stable under slicing.

\begin{defi}{Slicing for path categories}
Let \ct{C} be a path category and $X$ be an object in \ct{C}. Write $\ct{C}(X)$ for the full subcategory of $\ct{C}/X$ whose objects are fibrations. With the equivalences and fibrations as in \ct{C}, this is again a path category.
\end{defi}

\begin{prop}{pullingback}
If $f: Y \to X$ is a morphism in a path category, then the functor \[ f^*: \ct{C}(X) \to \ct{C}(Y), \] obtained by pulling back fibrations along $f$, preserves all the structure of a path category: that is, it preserves fibrations, equivalences, the terminal object and pullbacks of fibrations along arbitrary maps.
\end{prop}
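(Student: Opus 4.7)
The plan is to handle the four pieces of structure one at a time, noting that three of them are essentially formal consequences of the universal property of pullbacks together with the axioms for path categories, and only the preservation of equivalences requires actual work.

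First, preservation of fibrations is immediate from axiom (3): if $p : A \to X$ is a fibration, then $f^*p : f^*A \to Y$ is defined precisely as the pullback of $p$ along $f$, and axiom (3) tells us this is a fibration, hence an object of $\ct{C}(Y)$. Preservation of the terminal object is equally trivial: the terminal object of $\ct{C}(X)$ is $1_X : X \to X$, and pulling back the identity along $f$ yields the identity on $Y$, which is terminal in $\ct{C}(Y)$. Preservation of pullbacks of fibrations along arbitrary maps in the slice reduces to the pullback pasting lemma applied twice: a pullback square in $\ct{C}(X)$ is the same thing as a pullback square in $\ct{C}$ whose four objects are fibrant over $X$, and composing with $f^*$ corresponds to tacking on an additional pullback square along $f$, so the composite rectangle is again a pullback.

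The one genuine task is the preservation of equivalences. Given an equivalence $g : (A \to X) \to (B \to X)$ between two fibrations over $X$, I would invoke Brown's factorisation lemma (available in any category of fibrant objects, and in particular in any path category) to factor $g$ as $g = q \circ s$ where $q$ is a trivial fibration and $s$ is a section of some trivial fibration $p$. Since $q$ is a trivial fibration, $f^*q$ is a trivial fibration by axiom (6), and in particular an equivalence. Similarly, $f^*p$ is a trivial fibration by axiom (6), and the pullback functor preserves the identity $ps = 1$, so $f^*s$ is a section of the trivial fibration $f^*p$. Then $(f^*p)(f^*s) = 1$ is an equivalence and $f^*p$ is an equivalence, so by the 6-for-2 (or ordinary 2-out-of-3) property from axiom (4), $f^*s$ is an equivalence. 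Finally, $f^*g = (f^*q)(f^*s)$ is a composite of two equivalences, hence an equivalence again by 2-out-of-3.

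The main obstacle is this last step, since equivalences are not in general stable under pullback along arbitrary maps; the argument works only because Brown's factorisation lets us rewrite an arbitrary equivalence in terms of ingredients (trivial fibrations and their sections) whose behaviour under pullback is directly controlled by axioms (6) and (7). So the proof really rests on having Brown's lemma available in path categories, which is part of the basic theory developed in \cite{bergmoerdijk18}.
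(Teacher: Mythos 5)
Your proof is correct: the reductions for fibrations, the terminal object and pullbacks are exactly the routine checks, and your use of Brown's factorisation lemma (applied in the slice path category $\ct{C}(X)$, so that the mapping path object is fibrant over $X$) together with axiom (6), functoriality of $f^*$ and 2-out-of-3 is precisely the standard argument for preservation of equivalences. The paper itself only cites \cite{bergmoerdijk18} for this proposition, and the proof given there proceeds along essentially the same lines, so your approach matches it; the only tiny blemish is your closing mention of axiom (7), which is not actually needed (pullback stability of trivial fibrations and functoriality suffice).
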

\begin{proof} See \cite[Proposition 2.6]{bergmoerdijk18}.
\end{proof}

\subsection{Homotopy in path categories} On parallel morphisms in a path category one can define a homotopy relation, as follows.

\begin{defi}{homotopy}
If $f, g: Y \to X$ are two parallel maps in a path category, then we say that $f$ and $g$ are \emph{homotopic} if there is a map $H: Y \to PX$ to a path object on $X$ making
\begin{center}
\begin{tikzcd}
     & PX \arrow[d, "{(s,t)}"] \\
Y \arrow[r, "{(f,g)}"'] \arrow[ur, dotted, "H"] & X \times X
\end{tikzcd}
\end{center}
commute. (One can show that this is independent of the choice of path object $PX$.) In this case we write $f \simeq g$, or $H: f \simeq g$ if we wish to stress the \emph{homotopy} $H$. More generally, suppose $p: X \to I$ is a fibration and $P_I(X)$ is the path object of $X$ in the slice category $\mathcal{C}(I)$. If $f, g: Y \to X$ are two parallel maps with $pf = pg$, then we say that $f$ and $g$ are \emph{fibrewise homotopic} if there is a map $H: Y \to P_I(X)$ making
\begin{center}
\begin{tikzcd}
 & P_IX \arrow[d,"{(s, t)}"] \\
Y \ar[r, "{(f, g)}"'] \ar[ur, dotted, "H"] & X \times_I X
\end{tikzcd}
\end{center}
commute. (This is again independent of the choice of path object $P_I(X)$.) In this case we write $f \simeq_I g$, or $H: f \simeq_I g$ if we wish to stress the \emph{fibrewise homotopy} $H$.
\end{defi}

\begin{theo}{homcongr}
The homotopy relation $\simeq$ is a congruence on \ct{C}.
\end{theo}
\begin{proof}
This is Theorem 2.14 in \cite{bergmoerdijk18}.
\end{proof}

The quotient is the \emph{homotopy category} of \ct{C}. A map which becomes an isomorphism in the homotopy category is called a \emph{homotopy equivalence}.

\begin{theo}{eqishomeq}
The equivalences and homotopy equivalences coincide in a path category.
\end{theo}
\begin{proof}
This is Theorem 2.16 in \cite{bergmoerdijk18}.
\end{proof}

Using the notion of homotopy we can strengthen \reftheo{homcongr}.

\begin{prop}{everyobjgroupoiduptohom}
 Every object in a path category carries a groupoid structure up to homotopy. More precisely, if $A$ is an object in a path category and $PA$ is a path object for $A$ with equivalence $r: A \to PA$ and fibration $(s,t): PA \to A \times A$, while $PA \times_A PA$ is the pullback
\begin{center}
\begin{tikzcd}
 PA \times_A PA \arrow[d, "p_1"'] \arrow[r,"p_2"] & PA \arrow[d,"t"] \\
PA \ar[r, "s"'] & A,
\end{tikzcd}
\end{center}
then there are maps $\mu: PA \times_A PA \to PA$ and $\sigma: PA \to PA$ with $(s,t)\mu = (sp_2,tp_1)$ and $(s,t)\sigma = (t,s)$ such that:
\begin{enumerate}
\item $\mu(p_1,\mu(p_2,p_3)) \simeq_{A \times A} \mu(\mu(p_1,p_2),p_3): PA \times_A PA \times_A PA \to PA$.
\item $\mu(1, rs) \simeq_{A \times A} 1: PA \to PA$.
\item $\mu(rt,1) \simeq_{A \times A} 1: PA \to PA$.
\item $\mu(1,\sigma) \simeq_{A \times A} rt: PA \to PA$.
\item $\mu(\sigma, 1) \simeq_{A \times A} rs: PA \to PA$.
\end{enumerate}
\end{prop}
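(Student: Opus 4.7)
The plan is to reduce the entire proposition to a single path induction (J) principle for path categories: for any fibration $q: E \to PA$ equipped with a section $e: A \to E$ over $r$ (so $qe = r$), there exists a global section $j: PA \to E$ of $q$ with $jr \simeq_A e$ in the slice $\ct{C}(A)$. This is the categorical counterpart of Martin-L\"of's J-rule and is a consequence of Brown's factorisation lemma combined with the axiom that trivial fibrations admit sections.

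To construct $\sigma$, I would form the pullback $Q = PA \,\pullback{(t,s)}{(s,t)}\, PA$, whose points are pairs $(p, \bar p)$ satisfying $(s\bar p, t\bar p) = (tp, sp)$, equipped with its first projection $q = \pi_1: Q \to PA$. The constant-path assignment $a \mapsto (ra, ra)$ defines a section $e: A \to Q$ with $qe = r$; applying the J-rule and setting $\sigma := \pi_2 j$ yields $\sigma: PA \to PA$ with $(s,t)\sigma = (t,s)$ on the nose. The construction of $\mu$ is analogous but fibred over one endpoint: I would induct on $p_1$ while keeping the matching source $a = sp_1$ fixed, setting up the appropriate fibration over $PA$ in the slice $\ct{C}(A)$ whose fibre at $p_1$ records a candidate composite $p_2 \mapsto \mu(p_1, p_2)$. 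The reflexivity-case section sends $p_2$ (with $tp_2 = a$) to itself, and path induction extends this coherently to all of $PA \times_A PA$.

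For the five coherences, I would once more apply path induction, successively contracting each $PA$-factor in the domain to the image of $r$. Each law thereby reduces to an equation among maps built only from $r$, $s$, $t$ and projections, where it holds on the nose or by an immediate instance of reflexivity. For example, $\mu(1, rs) \simeq_{A\times A} 1$ reduces, after contracting the single $PA$-factor via J, to the equation $\mu(r,r) = r$ which is built into the reflexivity case of the $\mu$-construction; associativity and the inverse laws likewise reduce to comparisons at degenerate constant-path inputs, which are trivial.

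The main obstacle will be formulating the J-rule in the fibrewise strength the five laws demand. The coherences are stated up to $\simeq_{A \times A}$ rather than mere homotopy, so the transport principle must be packaged over $A \times A$ (or, for certain induction steps, over the appropriate single copy of $A$), and care is required to ensure that the endpoint maps $s$ and $t$ are preserved strictly rather than up to a further homotopy that would have to be tracked. Once this fibrewise J-rule is in place, each of the constructions and coherence checks is mechanical; the bulk of the work lies in choosing the correct slice category, the correct fibration and the correct reflexivity case for each step.
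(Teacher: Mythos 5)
Your overall strategy --- deriving a propositional J-rule from Brown's factorisation lemma plus the axiom that trivial fibrations have sections, and then constructing $\mu$, $\sigma$ and the five laws by contracting along degenerate-path inclusions --- is the right one, and it is essentially how the proof the paper cites ([vandenberg18, Prop.\ A.6]) proceeds. But one step fails as written: the construction of $\mu$. You ask for a fibration over $PA$ ``whose fibre at $p_1$ records a candidate composite $p_2 \mapsto \mu(p_1,p_2)$''; the fibres of such an object are mapping spaces, and a bare path category has no homotopy exponentials or $\Pi$-types (the paper introduces these as extra structure and verifies them only for $\mathbb{EFF}$ and $\mathbb{EFF}_1$), whereas the proposition is asserted for an arbitrary path category. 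The repair stays inside your framework but changes where the induction happens: note that $t: PA \to A$ is a trivial fibration (it is a fibration and $tr = 1_A$), so its pullback $p_1: PA \times_A PA \to PA$ is a trivial fibration and its section $(1, rs)$ is an equivalence; now lift in the square whose left edge is $(1,rs)$, whose right edge is the fibration $(s,t): PA \to A \times A$, whose top is the identity and whose bottom is $(sp_2, tp_1)$. The filler is $\mu$, with $(s,t)\mu = (sp_2,tp_1)$ strict and $\mu(1,rs) \simeq_{A \times A} 1$ coming from the upper triangle, so law (2) is free. This also shows that your J-rule must be available for lifting against equivalences other than $r: A \to PA$ itself (at least against sections of trivial fibrations such as $(1,rs)$); your Brown-factorisation derivation does give this, but your stated formulation (base $PA$, section over $r$) does not literally cover it.

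The second overclaim is that after contraction each law ``holds on the nose or by an immediate instance of reflexivity.'' It does not, because the computation rule of the J-rule here is only propositional: after contracting a factor, the two sides of, say, associativity become equal only up to the unit homotopies already constructed, and substituting those homotopies inside $\mu$ requires knowing that $\mu$ and $\sigma$ preserve fibrewise homotopy (congruence lemmas of the kind recorded in \refprop{basicpropoftransport}). Concretely, each law must be recast as finding a section of the fibration $P_{A\times A}(PA) \to PA \times_{(A \times A)} PA$ pulled back along the pair of maps being compared; the ``reflexivity case'' of that section-finding problem is supplied by the previously established laws rather than by an identity, and the resulting datum must then be transported along the merely-up-to-homotopy computation rule. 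All of this goes through, and keeping the homotopies fibrewise over $A \times A$ rather than over $A$ is exactly the bookkeeping you flag at the end, but it is where the real content of the proposition lies; as it stands your sketch treats it as automatic.
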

\begin{proof}
See \cite[Proposition A.6]{vandenberg18}.
\end{proof}

\begin{rema}{inftygroupoid}
 In fact, the arguments in \cite{vandenberggarner11,bourke16} can be modified to show that every object in a path category carries the structure of a Grothendieck $\infty$-groupoid.
\end{rema}

\subsection{Transport and univalence} A crucial concept in homotopy type theory is transport.

\begin{defi}{transport}
Let $f: Y \to X$ be a fibration, and $Y \times_X PX$ be the following pullback
\begin{center}
\begin{tikzcd}
Y \times_X PX \ar[d, "p_1"'] \ar[r, "p_2"] & PX \ar[d, "s"] \\
Y \ar[r, "f"'] & X
\end{tikzcd}
\end{center}
A \emph{transport structure} $\Gamma$ on $f$ is a map 
\[ \Gamma: Y \times_X PX \to Y \] such that $f \Gamma = tp_2$ and $\Gamma(1_Y, rf) \simeq_X 1_Y$.
\end{defi}

The intuition behind transport is the following: given a fibration $f: Y \to X$, an element $y_0 \in Y$ and a path $\alpha: x_0 \to x_1$ in $X$ with $f(y_0) = x_0$, transport finds an element $y_1$ in $Y$ with $f(y_1) = x_1$. In addition, we require that if $\alpha$ would the reflexivity path at $x_0$, then $y_1$ should be connected to $y_0$ by a path lying in the fibre over $x_0$.

\begin{theo}{existenceanduniquenessoftransport}
Every fibration $f: Y \to X$ carries a transport structure and transport structures are unique up to fibrewise homotopy over $X$.
\end{theo}
\begin{proof}
This is Theorem 2.26 in \cite{bergmoerdijk18}.
\end{proof}

In the sequel we will frequently rely on the following properties of transport.

\begin{prop}{basicpropoftransport}
 Let $f: Y \to X$ be a fibration with transport structure \[ \Gamma: Y \times_X PX \to Y. \] Then:
\begin{enumerate}
 \item $\Gamma$ preserves fibrewise homotopy. More precisely, the maps \[ \Gamma(sp_1,p_2),\Gamma(tp_1,p_2):P_X(Y) \times_X PX \to Y \] are fibrewise homotopic.
 \item If you have two paths in $X$ which have the same end points and which are homotopic relative those end points, then the transports of $y \in Y$ along both paths are fibrewise homotopic. More precisely, the maps
\[ \Gamma(p_1,sp_2), \Gamma(p_1,tp_2):Y \times_X P_{X \times X}(PX) \to Y \]
are fibrewise homotopic.
\item The transport of an element in $Y$ along the composition of two paths and the result of first transporting it along one path and then along the other are fibrewise homotopic. More precisely, the maps
\[ \Gamma(p_1,\mu(p_3,p_2)), \Gamma(\Gamma(p_1,p_2),p_3): Y \times_X PX \times_X PX \to Y \]
are fibrewise homotopic.
\end{enumerate}
\end{prop}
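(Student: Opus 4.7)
All three parts follow from the uniqueness (up to fibrewise homotopy) clause of \reftheo{existenceanduniquenessoftransport}, by exhibiting the two sides of each fibrewise homotopy either as alternative transport structures on a common fibration, or as maps derived from such.

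For part (1), my plan is to lift $\Gamma$ to a transport structure $\tilde\Gamma$ on the composite fibration $fs = ft : P_X(Y) \to X$; existence of such a $\tilde\Gamma$ is guaranteed by \reftheo{existenceanduniquenessoftransport} applied to this fibration. Composing with $s$ and $t$ gives two maps $s\tilde\Gamma, t\tilde\Gamma : P_X(Y) \times_X PX \to Y$ which are automatically fibrewise homotopic over $X$, the homotopy being $\tilde\Gamma$ itself, viewed as a map into the slice path object $P_X(Y)$. A separate application of uniqueness of transport on $f$ then identifies $s\tilde\Gamma \simeq_X \Gamma(sp_1, p_2)$ and $t\tilde\Gamma \simeq_X \Gamma(tp_1, p_2)$: both members of each pair compose with $f$ to give $tp_2$, and both reduce (up to fibrewise homotopy) to $sp_1$, resp.\ $tp_1$, after precomposition with the reflexivity section $(1, r\cdot fs)$. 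Transitivity finishes the argument.

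For part (2), the same strategy works once $P_{X \times X}(PX)$ is read as packaging paths in $X$ with common endpoints. I would apply the argument of (1) with the fibration $(s,t) : PX \to X \times X$ playing the role of $f$, so that $sp_2$ and $tp_2$ become two paths in $X$ sharing source and target. Both $\Gamma(p_1, sp_2)$ and $\Gamma(p_1, tp_2)$ then factor through a transport on this auxiliary fibration, and the reflexivity case reduces once more to uniqueness of transport on $f$.

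For part (3), both $A := \Gamma(p_1, \mu(p_3,p_2))$ and $B := \Gamma(\Gamma(p_1,p_2), p_3)$ should be viewed as candidates for a composite transport. I would treat the extra path $p_3$ as an auxiliary parameter and regard each of $A$ and $B$ as transport-like maps on the pullback of $f$ along the projection $Y \times_X PX \times_X PX \to Y \times_X PX$. Specialising $p_3$ to the reflexivity section collapses both $A$ and $B$, via the groupoid identity $\mu(rt, 1) \simeq 1$ from \reftheo{everyobjgroupoiduptohom}, to the single transport $\Gamma(p_1, p_2)$; by uniqueness of transport in the remaining path variable this forces the fibrewise homotopy $A \simeq_X B$. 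The main obstacle in all three parts is bookkeeping: identifying the correct ambient slice category in which to invoke uniqueness of transport, and checking that the reflexivity-case collapse really matches the reflexivity clause of a genuine transport structure up to the homotopies provided by \reftheo{everyobjgroupoiduptohom}. The conceptual input is always the same, namely that \reftheo{existenceanduniquenessoftransport} together with the groupoid structure on path objects is enough to force any two natural-looking candidates to coincide.
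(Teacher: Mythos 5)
Your overall strategy --- restrict both sides along a reflexivity section and then invoke an essential-uniqueness principle --- is the right one, but the tool you repeatedly cite does not literally apply, and this is the one genuine gap. The uniqueness clause of \reftheo{existenceanduniquenessoftransport} compares two transport structures on $f$, i.e.\ two maps $Y\times_X PX\to Y$ satisfying the transport conditions; in all three parts you are comparing maps with other domains ($P_X(Y)\times_X PX$, $Y\times_X P_{X\times X}(PX)$, $Y\times_X PX\times_X PX$), which are not transport structures on any fibration, so the theorem as stated says nothing about them. What you actually need is the engine behind its proof: the relevant reflexivity sections --- $(1,rf)\colon Y\to Y\times_X PX$, $r_Y\times_X 1\colon Y\times_X PX\to P_X(Y)\times_X PX$, $1\times_X r\colon Y\times_X PX\to Y\times_X P_{X\times X}(PX)$ (with $r$ here the reflexivity of the fibrewise path object), and $(p_1,p_2,rtp_2)$ --- are sections of trivial fibrations (pullbacks of $s\colon PX\to X$, or images of reflexivity maps under pullback functors, cf.\ \refprop{pullingback}), hence equivalences; since everything in sight is a map over $X$ between fibrations over $X$, such an equivalence becomes invertible in ${\rm Ho}(\ct{C}(X))$, so two maps over $X$ that agree up to fibrewise homotopy after precomposition with it are themselves fibrewise homotopic. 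With this cancellation principle made explicit, (1) and (2) are immediate and your auxiliary transport $\tilde\Gamma$ is not even needed: precomposing the two maps of (1) with $r_Y\times_X 1$, and the two maps of (2) with $1\times_X r$, yields $\Gamma$ on the nose in all four cases. (Your detour through $\tilde\Gamma$ does work, but each identification $s\tilde\Gamma\simeq_X\Gamma(sp_1,p_2)$ is again an instance of the cancellation principle, not of uniqueness of transport; also, in (2), a ``transport structure on $(s,t)\colon PX\to X\times X$'' would involve $P(X\times X)$ rather than $P_{X\times X}(PX)$, so that phrasing does not quite parse.)

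For (3) the skeleton is right, but note that after specialising $p_3$ to $rtp_2$ the collapse of the two sides is asymmetric: $\Gamma(\Gamma(p_1,p_2),rtp_2)\simeq_X\Gamma(p_1,p_2)$ is just the reflexivity clause of $\Gamma$, whereas $\Gamma(p_1,\mu(rtp_2,p_2))\simeq_X\Gamma(p_1,p_2)$ needs the groupoid law $\mu(rt,1)\simeq_{X\times X}1$ from \refprop{everyobjgroupoiduptohom} \emph{together with part (2)}: that law provides a fibrewise homotopy of paths rel endpoints, and (2) is what licenses transporting along it. Your text invokes the groupoid law but not this use of (2). After both collapses, the same cancellation along the equivalence $(p_1,p_2,rtp_2)$ finishes the argument. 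As for comparison with the paper: it does not prove the proposition here but cites \cite[Lemma A.10--12]{vandenberg18}, where the arguments are exactly of this precompose-with-an-equivalence-and-cancel form, so once the above repair is made your proof is essentially the intended one.
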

\begin{proof}
See \cite[Lemma A.10-12]{vandenberg18}.
\end{proof}

\begin{prop}{transportisanequivalence}
Suppose $p: Y \to X$ is a fibration and $f, g: Z \to X$ are two parallel maps which are homotopic via a homotopy $H: Z \to PX$. The morphism $f^* p \to g^* p$ over $Z$ induced by $H$ and the transport structure on $p$ is an equivalence. 
\end{prop}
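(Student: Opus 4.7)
The plan is to construct an explicit candidate inverse $\phi_{\sigma H}: g^*p \to f^*p$ using the path-inversion operation $\sigma$ from \refprop{everyobjgroupoiduptohom}, verify via repeated applications of \refprop{basicpropoftransport} that the two composites are fibrewise homotopic over $Z$ to the respective identities, and then conclude by \reftheo{eqishomeq}, applied inside the slice $\ct{C}(Z)$, that $\phi_H$ is an equivalence in $\ct{C}$.

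Write $q_Z, q_Y$ for the two projections out of $f^*p = Z \times_X Y$. Using the transport structure $\Gamma$ on $p$ (provided by \reftheo{existenceanduniquenessoftransport}), the induced map is $\phi_H = (q_Z, \Gamma(q_Y, Hq_Z))$; it lands in $g^*p$ because $p\,\Gamma(q_Y, Hq_Z) = tHq_Z = gq_Z$. Since $(s,t)\sigma = (t,s)$, the composite $\sigma H$ is a homotopy from $g$ to $f$, and the same recipe yields $\phi_{\sigma H}: g^*p \to f^*p$ over $Z$.

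Now examine $\phi_{\sigma H}\phi_H$: its $Z$-coordinate is $q_Z$ and its $Y$-coordinate is $\Gamma(\Gamma(q_Y, Hq_Z), \sigma Hq_Z)$. Clause (3) of \refprop{basicpropoftransport} produces a fibrewise homotopy over $X$ to $\Gamma(q_Y, \mu(\sigma Hq_Z, Hq_Z))$; clause (5) of \refprop{everyobjgroupoiduptohom} gives $\mu(\sigma H, H) \simeq_{X\times X} rf$, so clause (2) of \refprop{basicpropoftransport} yields a further fibrewise homotopy to $\Gamma(q_Y, rfq_Z)$; finally the normalisation condition from \refdefi{transport} gives a last fibrewise homotopy to $q_Y$. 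Because $Z \times_X P_X(Y)$ serves as a path object for $f^*p$ in $\ct{C}(Z)$ (by applying \refprop{pullingback} to $P_X(Y)$ viewed as an object of $\ct{C}(X)$), pairing each intermediate fibrewise homotopy over $X$ with the constant $Z$-coordinate $q_Z$ upgrades it to a fibrewise homotopy over $Z$ inside $f^*p$. The symmetric argument, using clause (4) of \refprop{everyobjgroupoiduptohom} in place of (5), gives $\phi_H\phi_{\sigma H} \simeq_Z \mathrm{id}_{g^*p}$.

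The main obstacle is the book-keeping in the previous paragraph: one must be meticulous about the source/target orientation conventions for $\mu$ and $\sigma$ (a slip here exchanges the roles of $rf$ and $rg$ and renders the chain circular), and one must verify that fibrewise homotopies over $X$ produced by \refprop{basicpropoftransport} really do lift to fibrewise homotopies over $Z$ of the pair-maps into $f^*p$; the latter reduces to the slice path-object observation just mentioned, which itself is a routine consequence of \refprop{pullingback}.
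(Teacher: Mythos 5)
Your proposal is correct and follows essentially the same route as the paper: the paper's proof also takes the map induced by the inverse homotopy $\sigma H$ as the candidate homotopy inverse and appeals to the transport properties of \refprop{basicpropoftransport} to see that the two composites are fibrewise homotopic to the identities. You have merely filled in the bookkeeping (the groupoid laws of \refprop{everyobjgroupoiduptohom} and the lifting of homotopies over $X$ to homotopies over $Z$ via the pulled-back path object) that the paper leaves implicit.
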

\begin{proof} To be clear, if $H: Z \to PX$ is the homotopy and $\Gamma: Y \times_X PX \to Y$ is a transport structure, the induced map $f^*p \to g^* p$ is:
\begin{center}
 \begin{tikzcd}
  Z \pullback{f}{p} Y \ar[rr, "1 \times_X H\pi_Z"] & & Z \times_X Y \times_X PX \ar[rr, "Z \times_X \Gamma"] & & Z \pullback{g}{p} Y.
 \end{tikzcd}
\end{center}
Since every path in $PX$ has an inverse up to homotopy, there is also an operation $g^* p \to f^*p$ induced by the inverse of the homotopy $H$. Using the properties of transport from the previous proposition one can show that this operation is homotopy inverse to the one induced by $H$.
\end{proof}

A fibration is univalent if we can also go in the other direction, as follows.

\begin{defi}{univalence} The fibration $p: Y \to X$ is \emph{univalent} if the following holds: whenever $f, g: Z \to X$ are two morphisms and $w: f^* p \to g^* p$ is an equivalence in the slice over $Z$, then there is a homotopy $H: Z \to PX$ between $f$ and $g$ such that $w$ is, up to fibrewise homotopy over $Z$, the equivalence induced by $H$, as in the previous proposition.
\end{defi}

\begin{rema}{onunivalence} In other words, if $p: Y \to X$ is a fibration, then any path $\pi: x \to x'$ in $X$ induces an equivalence between the fibres $Y_x \simeq Y_{x'}$; a fibration is univalent if, up to fibrewise homotopy, every equivalence between the fibres is obtained in this way. This may look a bit different from  the way univalence is usually stated, but it is equivalent to the standard formulation, as discussed on the HoTT mailing list. See also \cite[Theorem 3.5]{ortonpitts18} for more details.
\end{rema}

\subsection{Function spaces} No type theory is complete without function types. We will work with function types in the following form, that is, they will be exponentials in the homotopy category.

\begin{defi}{hexpo}
If $X$ and $Y$ are objects in a path category, then we will call an object $X^Y$ the \emph{homotopy exponential} of $X$ and $Y$ if it comes equipped with a map ${\rm ev}: X^Y \times Y \to X$ such that for any map $h: A \times Y \to X$ there is a map $H: A \to X^Y$ such that ${\rm ev}(H \times 1_Y) \simeq h$, with $H$ being unique up to homotopy with this property.
\end{defi}

\begin{rema}{ondefihomexp} Perhaps two remarks are in order about this definition of a homotopy exponential from a type-theoretic perspective. First of all, we only demand that ${\rm ev}(H \times 1_Y)$ and $h$ are homotopic, rather than equal. This basically states $\beta$-equality in a propositional, rather than a definitional form: this reflects the fact that we are thinking of a version of type theory in which all the computational rules have been stated propositionally. Secondly, the fact that $H$ is unique up to homotopy amounts to the validity of function extensionality. (See also \cite[Remark 5.8]{bergmoerdijk18}.) Similar remarks apply to the following definition of homotopy $\Pi$-types.
\end{rema}

\begin{defi}{hPitypes} If $g: Z \to Y$ and $f: Y \to X$ are fibration, then the \emph{homotopy $\Pi$-type} of $f$ along $g$ is a fibration $\Pi_f(g): E \to X$ equipped with a map $\varepsilon: f^*\Pi_f(g) \to g$ over $Y$ such that for any $h: A \to X$ and $m: f^*h \to g$ over $Y$ there exists a map $M: h \to \Pi_f(g)$ such that $\varepsilon \circ f^*M \simeq_Y m$, with $M$ being unique up to fibrewise homotopy over $X$ with this property. We will say that a path category \ct{C} has homotopy $\Pi$-types if all such homotopy $\Pi$-types exist for fibrations $f$ and $g$.
\end{defi}

One can phrase this in terms of right adjoints but one has to be a bit careful. If \ct{C} is a path category with homotopy $\Pi$-types and $f: Y \to X$ is a fibration in \ct{C}, then the pullback functor $f^*: \ct{C}(X) \to \ct{C}(Y)$ has a right adjoint if we pass to the homotopy category. That is, there is an operation $\Pi_f: \ct{C}(Y) \to \ct{C}(X)$ which goes in the other direction, but it is only a functor up to fibrewise homotopy over $X$. 

Also, we cannot really say that the operation $\Pi_f: \ct{C}(Y) \to \ct{C}(X)$ preserves fibrations. We do have the following statement, however, which is a good substitute.

\begin{prop}{goodchoiceofpie}
 Let \ct{C} be a path category with homotopy $\Pi$-types. Assume $f: Y \to X$ is a fibration in $\ct{C}$, while $p: B \to A$ is a fibration in $\ct{C}(Y)$ and $\Pi_f(A)$ is a homotopy $\Pi$-type in $\ct{C}(X)$. Then there exists a homotopy $\Pi$-type $\Pi_f(B)$ in $\ct{C}(X)$ together with a fibration $\Pi_f(p): \Pi_f(B) \to \Pi_f(A)$ in $\ct{C}(X)$ such that:
\begin{enumerate}
 \item The diagram
  \begin{center}
 \begin{tikzcd}
  f^*\Pi_f(B) \ar[d,"f^*\Pi_f(p)"'] \ar[r, "\varepsilon"] & B \ar[d, "p"] \\
  f^*\Pi_f(A) \ar[r, "\varepsilon"'] & A \\
 \end{tikzcd}
\end{center}
  in $\ct{C}(Y)$ commutes.
 \item For each object $C$ in $\ct{C}(X)$ the diagram
 \begin{center}
 \begin{tikzcd}
  {\rm Ho}(\ct{C}(X))(C, \Pi_f(B)) \ar[d] \ar[r] & {\rm Ho}(\ct{C}(Y))(f^*C, B) \ar[d] \\
  {\rm Ho}(\ct{C}(X))(C, \Pi_f(A)) \ar[r] & {\rm Ho}(\ct{C}(Y))(f^*C, A) \\
 \end{tikzcd}
\end{center}
  is a pullback in $\Sets$.
\end{enumerate}
\end{prop}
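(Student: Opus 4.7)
The plan is to start from an arbitrary homotopy $\Pi$-type and rigidify it using the factorisation of a certain comparison map into an equivalence followed by a fibration. First, use the hypothesis on $\ct{C}$ to pick some homotopy $\Pi$-type $E'$ of $f$ along $B$ with its evaluation $\varepsilon'_B: f^*E' \to B$. Applying the universal property of $\Pi_f(A)$ to the map $p\varepsilon'_B: f^*E' \to A$ over $Y$ produces a map $q: E' \to \Pi_f(A)$ in $\ct{C}(X)$ with $\varepsilon_A \circ f^*q \simeq_Y p\varepsilon'_B$. By the standard factorisation available in any path category, write $q = \Pi_f(p) \circ w$ with $w: E' \to E$ an equivalence and $\Pi_f(p): E \to \Pi_f(A)$ a fibration, and set $\Pi_f(B) := E$.

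Next, construct the desired evaluation $\varepsilon_B: f^*E \to B$. Since pullback preserves equivalences by \refprop{pullingback}, the map $f^*w$ is an equivalence, hence a homotopy equivalence by \reftheo{eqishomeq}; choose a homotopy inverse $v: f^*E \to f^*E'$. Then $p\varepsilon'_B v \simeq_Y \varepsilon_A f^*\Pi_f(p) f^*w v \simeq_Y \varepsilon_A f^*\Pi_f(p)$. Since $p$ is a fibration, the homotopy lifting property of fibrations in a path category allows one to modify $\varepsilon'_B v$ within its homotopy class to obtain $\varepsilon_B: f^*E \to B$ with $p\varepsilon_B = \varepsilon_A \circ f^*\Pi_f(p)$ holding strictly. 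This establishes (1), and by construction $\varepsilon_B \circ f^*w \simeq_Y \varepsilon'_B$.

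To verify that $(\Pi_f(B), \varepsilon_B)$ satisfies the universal property of a homotopy $\Pi$-type, I would transport the property along $w$. For existence, given $m: f^*C \to B$ over $Y$, find $h': C \to E'$ with $\varepsilon'_B f^*h' \simeq_Y m$ using the universal property of $E'$, and set $h := wh'$; then $\varepsilon_B f^*h = \varepsilon_B f^*w f^*h' \simeq_Y \varepsilon'_B f^*h' \simeq_Y m$. Uniqueness is dual: given two candidates $h_1, h_2: C \to E$ with evaluations homotopic over $Y$, compose with $v$ and apply uniqueness for $E'$. Condition (2) then drops out: commutativity of the hom-set square is immediate from (1), and since both horizontal arrows are bijections by the universal properties of $\Pi_f(B)$ and $\Pi_f(A)$, the square is automatically cartesian in $\Sets$.

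The main obstacle is achieving the strict commutativity in (1): the universal property of $\Pi$-types only produces homotopy-commutative diagrams, so one must genuinely invoke that $p$ is a fibration and use homotopy lifting to promote a homotopy into an equality. Once $\Pi_f(p)$ has been pinned down by the equivalence/fibration factorisation of $q$, everything else is a diagram chase in the slice path categories.
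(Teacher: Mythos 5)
Your proposal is correct in substance but follows a genuinely different route from the paper. The paper's proof is a direct construction: it pulls the fibration $p: B \to A$ back along the evaluation $\varepsilon: f^*\Pi_f(A) \to A$ to obtain a fibration $q: P \to f^*\Pi_f(A)$, and then sets $\Pi_f(p) := \Pi_\pi(q)$, where $\pi: f^*\Pi_f(A) \to \Pi_f(A)$ is the projection (a pullback of $f$, hence a fibration). There $\Pi_f(B)$ is a dependent product taken fibrewise over $\Pi_f(A)$, so the fibration structure of $\Pi_f(p)$ and the commutation in (1) are built in, at the cost of having to verify the universal property of the composite. You instead take an arbitrary homotopy $\Pi$-type $E'$ of $B$ along $f$, compare it with $\Pi_f(A)$ via the universal property, rigidify by factoring the comparison map $q$ as an equivalence $w$ followed by a fibration, and use rectification of homotopy-commutative triangles over the fibration $p$ (\cite{bergmoerdijk18}, Proposition 2.31, applied in $\ct{C}(Y)$) to make (1) strict; the universal property is transported along $w$, and (2) is then formal because both horizontal maps in the hom-set square are bijections. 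Your version isolates exactly which path-category facts are needed and makes (2) trivial; the paper's version has the advantage that its explicit shape (a $\Pi$ along $\pi$ of a pullback of $p$) is precisely what is reused in the induction proving \reftheo{hlevelsandPi}, where one needs to know that $\Pi_f(p)$ inherits the hlevel of $p$ --- with your abstract characterisation that inheritance would need a separate argument.

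One small repair is needed in your uniqueness step: $v$ is a map $f^*E \to f^*E'$ over $Y$, so the candidates $h_1, h_2: C \to E$, which live over $X$, cannot be composed with it, and the uniqueness clause for $E'$ concerns maps $C \to E'$ in $\ct{C}(X)$, not maps $f^*C \to f^*E'$ over $Y$. Instead choose a homotopy inverse $u: E \to E'$ of $w$ in $\ct{C}(X)$; from $\varepsilon_B \circ f^*w \simeq_Y \varepsilon'_B$ and $wu \simeq_X 1$ one gets $\varepsilon'_B \circ f^*u \simeq_Y \varepsilon_B$, so $uh_1$ and $uh_2$ both solve the lifting problem for $E'$, whence $uh_1 \simeq_X uh_2$ and therefore $h_1 \simeq_X h_2$. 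With that adjustment your argument is complete.
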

\begin{proof} (Compare \cite[Proposition 5.5]{bergmoerdijk18}.) We construct $\Pi_f(p)$ as follows. First, we take the pullback
\begin{center}
 \begin{tikzcd}
  P \ar[d, "q"'] \ar[r] & B \ar[d, "p"] \\
  f^*\Pi_f(A) \ar[r, "\varepsilon"'] & A \\
 \end{tikzcd}
\end{center}
and then we define $\Pi_f(p)$ to be $\Pi_{\pi}(q)$, where $\pi: f^*\Pi_f(A) \to \Pi_f(A)$ is the projection.
\end{proof}

\subsection{Homotopy $n$-types} One can also makes sense of Voevodsky's homotopy levels in the context of path categories.

\begin{defi}{hlevels}
The \emph{fibrations of $n$-types} ($n \geq -2$) are defined inductively as follows:
\begin{itemize}
\item A fibration $f: Y \to X$ is a fibration of $(-2)$-types if $f$ is trivial. 
\item A fibration $f: Y \to X$ is a fibration of $(n+1)$-types if $P_X(Y) \to Y \times_X Y$ is a fibration of $n$-types for some path object $P_X(Y)$ for $f$ in $\ct{C}(X)$.
\end{itemize}
An object $X$ in a path category is called an \emph{$n$-type} if the map $X \to 1$ is a fibration of $n$-types.
\end{defi}

\begin{defi}{namesforhlevels} For small $n$ we will often use the following terminology for $n$-types, as is customary in the context of HoTT.
\begin{center}
\begin{tabular}{|c|c|} \hline
(-2)-types & contractible types \\
(-1)-types  & propositions \\
0-types & sets \\
1-types & groupoids \\ \hline
\end{tabular}
\end{center}
\end{defi}

In the definition of a fibration of $(n+1)$-types the dependence on the choice of path object $P_X(Y)$ is only apparent. The reason for this is that path objects are unique up to equivalence (see \cite[Corollary 2.39]{bergmoerdijk18}) and we have the following proposition.

\begin{prop}{hlevelsstableunderpbk} Let $f$ be a fibration of $n$-types.
\begin{enumerate}
\item If $g$ is a fibration which can be obtained by pulling back $f$ along some map, then $g$ is also a fibration of $n$-types.
\item If $w$ is an equivalence and $g$ is a fibration such that $f \simeq gw$ or $fw \simeq g$, then $g$ is also a fibration of $n$-types.
\item If $w$ is an equivalence and $g$ is a fibration such that $f \simeq wg$ or $wf \simeq g$, then $g$ is also a fibration of $n$-types.
\end{enumerate}
\end{prop}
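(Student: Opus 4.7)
The plan is to proceed by induction on $n \geq -2$, establishing (1), (2), and (3) simultaneously. For the base case $n = -2$, trivial fibrations are stable under pullback by axiom (6), yielding (1). For (2) and (3), a trivial fibration is in particular an equivalence; each of the relations $f \simeq gw$, $fw \simeq g$, $f \simeq wg$, or $wf \simeq g$ with $w$ an equivalence then displays a composition involving $g$ and $w$ as homotopic to an equivalence, and the 2-for-6 axiom (4) forces $g$ itself to be an equivalence. Since $g$ is a fibration by hypothesis, it is trivial.

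For the inductive step, assume the proposition at level $n$ and fix a path object $P_X(Y) \to Y \times_X Y$ of $f$ whose associated fibration is $n$-type. In each case I would exhibit a corresponding path object for $g$ (in the appropriate slice) whose associated fibration is obtained from $(s,t): P_X(Y) \to Y \times_X Y$ by one of the operations (1), (2), (3) at level $n$, and then invoke the IH. For (1), with $g = h^*f$, the pullback functor $h^*$ preserves path category structure, so $h^*(P_X(Y))$ is a path object for $g$ whose associated fibration is the pullback of $(s,t)$; IH(1) closes the case. For (2)(a), with $f \simeq gw$ and $w: Y \to Z$ an equivalence, I would first check that $w \times_X w: Y \times_X Y \to Z \times_X Z$ is an equivalence (using pullback stability of equivalences along fibrations together with 2-out-of-3), then form the pullback of any path object $P_X(Z) \to Z \times_X Z$ along $w \times_X w$. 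This pullback is again a path object for $f$: the factorisation of the diagonal through it is an equivalence by 2-out-of-3 applied to the composite with the equivalence $P_X(Z) \to Z$. The $n$-type property can then be transferred between $(s,t)_f$ and $(s,t)_g$ by combining IH(1) with IH(2) or IH(3). The remaining sub-cases of (2) and (3) are treated analogously, using in each case that composition with an equivalence preserves path objects up to equivalence in the correct slice.

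The principal subtlety is a \emph{some-versus-all} issue: the definition of fibrations of $n$-types requires only the existence of \emph{some} path object with an $n$-type associated fibration, but the constructions above manipulate specific, possibly distinct, choices of path objects. The way to avoid circularity is to establish, as part of the same induction, that this property is independent of the chosen path object: any two path objects of $f$ in $\ct{C}(X)$ are linked by an equivalence over $Y \times_X Y$ (by the uniqueness of path objects up to equivalence), so given IH(2) and IH(3) at level $n$, the $n$-type property transports between them. The main obstacle is thus bookkeeping: organising the case split into (1), (2)(a)-(b), (3)(a)-(b); choosing in each case the correct instance of the IH; and verifying, in the cases of (2) and (3), that the pullback one forms really is a path object rather than a mere fibration sitting in an analogous diagram—this last step amounts to repeated use of 2-out-of-3 for equivalences together with the uniqueness of path objects.
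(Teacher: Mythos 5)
Your overall strategy for (1) and for the inductive step of (2) — compare path objects across the equivalence and invoke pullback stability — is essentially the paper's, and you correctly flag the some-versus-all issue. But two steps in your treatment of (2) do not go through as written. First, when only $f \simeq gw$ holds, the map $w \times_X w\colon Y \times_X Y \to Z \times_X Z$ you use is not even defined: for $(y,y')$ with $f(y)=f(y')$ one only gets $g(wy) \simeq g(wy')$, not equality. One must first strictify, replacing $w$ by a homotopic $w'$ with $f = gw'$ (possible because $g$ is a fibration; this is Proposition 2.31 of \cite{bergmoerdijk18}, which the paper invokes), and reduce the case $fw \simeq g$ to this one via a homotopy inverse. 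Second, and more seriously, your transfer of the $n$-type property runs in the wrong direction: you exhibit a path object of the \emph{known} side $f$ as the pullback $(w\times_X w)^* P_X(Z)$ of a path object of the \emph{unknown} side, so you know the pullback is of $n$-types and must conclude the same for $(s,t)\colon P_X(Z) \to Z \times_X Z$. That is not an instance of IH(1), and it is not directly an instance of IH(2) or IH(3) either: the comparison only gives an identity $(s,t)\circ v = (w\times_X w)\circ q$ in which $v$ and $w \times_X w$ are equivalences but the two composites are not fibrations. The step can be repaired (pull $q$ back along a homotopy inverse of $w\times_X w$, use that pullbacks along homotopic maps are equivalent over the base as in \refprop{transportisanequivalence}, then apply IH(2)), but some such additional argument is required. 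The paper avoids the problem by orienting the comparison the other way: Theorem 2.38 of \cite{bergmoerdijk18} provides a map $P_X(Z) \to P_X(Y)$ lying strictly over $w\times_X w$, hence an equivalence $P_X(Z) \to (w\times_X w)^* P_X(Y)$ strictly over $Z \times_X Z$, to which IH(2) applies at once; this also removes the need to prove path-object independence inside the induction.

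For (3), the plan to proceed ``analogously'' is where the real obstruction lies. If $f \simeq wg$, the equivalence $w$ changes the codomain, so the relevant fibrewise path objects live over different bases; since $w$ is only an equivalence and not a fibration, you cannot pull back along it, and comparing $Y \times_C Y$ with $Y \times_X Y$ (equivalently, comparing path objects of $g$ and of $f$ in their respective slices) is essentially the content of the statement rather than something given by ``composition with an equivalence preserves path objects in the correct slice''. The paper's proof of (3) avoids path objects entirely: pull the fibration $f$ back along $w$ to obtain a fibration $h$, of $n$-types by (1), together with an equivalence $v$ satisfying $fv = wh$; then $wgv \simeq fv = wh$, cancel the equivalence $w$ in the homotopy category to get $gv \simeq h$, and conclude by (2). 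Some such reduction of (3) to (1) and (2) should replace your inductive path-object argument for that case.
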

\begin{proof}
(1) is easily proved by induction on $n \geq -2$ using \refprop{pullingback}.

(2) It suffices to prove that if $g = fw$ and $w$ is an equivalence, then $g$ is a fibration of $n$-types. To see this, note, first of all, that if $g \simeq fw$, then there is a map $w'$ homotopic to $w$ such that $g = fw'$ (see \cite[Proposition 2.31]{bergmoerdijk18}); such a map $w'$ is an equivalence as well, because it is homotopic to $w$. Furthermore, if $gw \simeq f$, then $g \simeq fw^{-1}$ for a homotopy inverse $w^{-1}$ of $w$; and such a homotopy inverse is an equivalence as well.

So we prove that if $g = fw$ and $w: Z \to Y$ is an equivalence and $f: Y \to X$ and $g: Z \to X$ are fibrations, with $f$ being a fibration of $n$-types, then $g$ is a fibration of $n$-types as well. This statement is proved by induction on $n$, with the case $n = 0$ following from 6-for-2 for equivalences.

So assume the statement is true for $n$ and $f$ is a fibration of $(n+1)$-types. We have a commuting diagram of the form
\begin{center}
 \begin{tikzcd}
  Z \ar[r] \ar[d, "w"'] & P_X(Z) \ar[r] \ar[d, dotted] & Z \times_X Z \ar[d, "w \times_X w"] \\
Y \ar[r] & P_X(Y) \ar[r] & Y \times_X Y.
 \end{tikzcd}
\end{center}
In this diagram we have a dotted arrow down the middle which make the diagram on the left commute up to homotopy, while the diagram of the right commutes on the nose (see \cite[Theorem 2.38]{bergmoerdijk18}). The dotted arrow is an equivalence because all the other arrows in the left hand square are. In addition, because equivalences are stable under pullback along fibrations (see \cite[Proposition 2.7]{bergmoerdijk18}), we have that $w \times_X w$ and the inscribed map $P_X(Z) \to (w \times_X w)^* P_X(Y)$ are equivalences as well. By assumption, the map $P_X(Y) \to Y \times_X Y$ is a fibration of $n$-types, so by (1) the same is true for its pullback along $w \times_X w$, and hence the same is true for $P_X(Z) \to Z \times_X Z$ by induction hypothesis. Hence $g$ is a fibration of $(n+1)$-types.

(3) Again, it suffices to prove only one of the two statements and we will show that $f \simeq wg$ implies that $g$ is a fibration of $n$-types. So assume $f \simeq wg$ and construct the following pullback
\begin{center}
\begin{tikzcd}
 D \ar[r, "v"] \ar[d, "h"'] & B \ar[d, "f"] \\
C \ar[r, "w"'] & A.
\end{tikzcd}
\end{center}
We have $wgv \simeq fv = wh$ and since $w$ is an equivalence, it follows that $h \simeq gv$. Since $v$ is an equivalence by \cite[Proposition 2.7]{bergmoerdijk18} and $h$ is a fibration of $n$-types by (1), this implies that $g$ is a fibration of $n$-types by (2).
\end{proof}

\begin{theo}{hlevelsandPi}
If \ct{C} is a path category with homotopy $\Pi$-types, then $\Pi_f:\ct{C}(Y) \to \ct{C}(X)$ preserves fibrations of $n$-types for any fibration $f: Y \to X$.
\end{theo}
\begin{proof}
This is Theorem 7.1.9 in the HoTT book \cite{univalent13}. The proof is by induction on $n \geq -2$. 

We have seen that $\Pi_f$ is not really a functor, but it is one up to homotopy. For that reason it preserves equivalences and therefore the statement is true for $n = -2$.

For the induction step, suppose that $p: Z \to Y$ is a fibration on $(n+1)$-types in $\ct{C}(Y)$; so $P_Y(Z) \to Z \times_Y Z$ is a fibration of $n$-types. Since $\Pi_f(Z) \times_X \Pi_f(Z)$ is a homotopy $\Pi$-type of $Z \times_Y Z$ along $f$, we can compute $\Pi_f(P_Y(Z))$ as in the proof of \refprop{goodchoiceofpie}, which, by induction hypothesis and the previous proposition, ensures that 
\[ \Pi_f(P_Y(Z))) \to \Pi_f(Z) \times_X \Pi_f(Z) \]
will be a fibration of $n$-types. Since $\Pi_f$ preserves equivalences, $\Pi_f(P_Y(Z))$ is a path object for $\Pi_f(Z)$ in $\ct{C}(X)$ and therefore $\Pi_f(Z) \to X$ is a fibration of $(n+1)$-types, as desired.
\end{proof}

\begin{defi}{ntruncation}
The \emph{$n$-truncation} of a fibration $f: B \to A$ is a factorisation of $f$ as a map $g: B \to C$ followed by a fibration on $n$-types $h: C \to A$, in such a way that if $g': B \to C'$ and $h': C' \to A$ is another such factorisation, then there exists a map $d: C \to C'$ over $A$ with $dh \simeq_A g'$; if, moreover, $d': C \to C'$ would be another such map then $d \simeq_A d'$. 
\end{defi}

\subsection{Small fibrations} Let us suppose $\ct{S}$ is a subclass of the class of fibrations which is closed under composition, contains all isomorphisms and is stable under homotopy pullback. (Recall that a commutative square
\begin{center}
\begin{tikzcd}
D \ar[r] \ar[d, "g"'] & B \ar[d, "f"] \\
C \ar[r] & A 
\end{tikzcd}
\end{center}
in which both $f$ and $g$ are fibrations is a \emph{homotopy pullback} if the induced map $D \to C \times_A B$ is an equivalence. By saying that $\ct{S}$ is stable under homotopy pullback, we mean that whenever there is such a square and $f$ belongs to $\ct{S}$, then $g$ belongs to $\ct{S}$ as well.) Let us refer to the elements of $\ct{S}$ as the ``small fibrations''.

\begin{defi}{representation} Let $\ct{S}$ be a class of small fibrations, as above. A \emph{representation} for such a class $\ct{S}$ is a small fibration $\pi: E \to U$ such that any other small fibration $f: B \to A$ can be obtained as a homotopy pullback of that one via some map $A \to U$.
\end{defi}
 
We will be especially interested in classes of small fibrations with a univalent representation. Note that for a univalent representation $\pi: E \to U$ the classifying map $A \to U$ is unique up to homotopy.

For obtaining models of CoC, we need to assume in addition that the class of small fibrations is impredicative.

\begin{defi}{impredicative}
Let $\ct{S}$ be a class of small fibrations in a path category with homotopy $\Pi$-types. Then $\ct{S}$ is called \emph{impredicative} (or \emph{polymorphic}) if it is closed under $\Pi_f$ for \emph{any} fibration $f$.
\end{defi}

Finally, we can also formulate Voevodsky's resizing axiom.

\begin{defi}{resizing} A class of small fibrations $\ct{S}$ satisfies \emph{(propositional) resizing} if every propositional fibration belongs to $\ct{S}$.
\end{defi}

\section{The path category $\mathbb{EFF}$}

In this section we will see that Martin Hyland's effective topos \cite{hyland82} is the homotopy category of an interesting path category. This path category will be called $\mathbb{EFF}$ and is constructed as follows.

Objects of the category $\mathbb{EFF}$ consist of:
\begin{enumerate}
\item A set $A$.
\item A function $\alpha: A \to \mathbb{N}$ (sending an element $a \in A$ to its \emph{realizer}).
\item For each pair of elements $a,a' \in A$ a subset $\mathcal{A}(a,a')$ of $\mathbb{N}$.
\item A function which computes for a realizer of $a \in A$ an element in $1_a \in \mathcal{A}(a,a)$.
\item A function which given realizers for $a,a'$ and $\pi \in \mathcal{A}(a,a')$ computes an element $\pi^{-1} \in \mathcal{A}(a', a)$.
\item A function which given realizers for $a,a',a''$ and $\pi \in \mathcal{A}(a,a'), \pi' \in \mathcal{A}(a',a'')$ computes an element $\pi' \circ \pi \in \mathcal{A}(a,a'')$.
\end{enumerate}

Objects in this category will typically be denoted by $(A, \alpha, \mathcal{A})$, suppressing the data in  (4-6).

\begin{rema}{onobjinEFF1}
Just to be clear, condition (4) has to be read in the following way: there is a partial computable function, which given any element $a \in A$ computes an element in $\mathcal{A}(a,a)$ from the realizer of $a$.  Conditions (5) and (6) have to be read in the same way. In fact, from a certain point onwards we will simply state a condition like (4) as: an element in $\mathcal{A}(a,a)$ can be computed from $a \in A$, where it has to be understood that the partial computable function gets the realizer of $a$ as input.
\end{rema}

A morphism $f: (B, \beta, \mathcal{B}) \to (A, \alpha, \mathcal{A})$ in $\mathbb{EFF}$ consists of:
\begin{enumerate}
\item A function $f: B \to A$ such that the realizer of $f(b)$ can be computed from a realizer of $b$.
\item For each $b, b' \in B$ a function $f_{(b, b')}: \mathcal{B}(b,b') \to \mathcal{A}(fb, fb')$ such that $f_{(b,b')}(\pi)$ can be computed from realizers for $b,b'$ and $\pi$.
\end{enumerate}
If $f: (B, \beta, \mathcal{B}) \to (A, \alpha, \mathcal{A})$ is such a morphism, we will refer to G\"odel numbers for the partial computable functions whose existence is demanded in (1) and (2) as \emph{trackings} for the 0- and 1-cells, respectively. 

\begin{rema}{thinkingofEFF} The right way to think about the objects and morphisms in $\mathbb{EFF}$ is as locally codiscrete bigroupoids and (non-strict) homomorphisms between them, with elements in $A$ as the 0-cells and the elements in $\mathcal{A}(a,a')$ as the 1-cells. The definitions do not mention 2-cells, but one should really think of any two elements in $\mathcal{A}(a,a')$ as being connected by a unique 2-cell. So one could add certain (weak) groupoid laws and demand that the morphisms preserve them, but with this intuition in mind it becomes clear that such conditions would be superfluous.
\end{rema}

\begin{defi}{homotopyinEFF}
Two parallel maps $f, g: (B, \beta, \mathcal{B}) \to (A, \alpha, \mathcal{A})$ are \emph{homotopic} if there is a function computing for every realizer for $b \in B$ an element in $\mathcal{A}(fb,gb)$ (such a function is called a \emph{homotopy}). A G\"odel number for such a function will be called a \emph{coded homotopy}. A map $f: (B, \beta, \mathcal{B}) \to (A, \alpha, \mathcal{A})$ is a \emph{(homotopy) equivalence} if there is a morphism $g$ in the other direction (the homotopy inverse) such that both composites $fg$ and $gf$ are homotopic to the identity.
\end{defi}

Note that the homotopy relation does not take into account the action of morphisms on 1-cells.

\begin{defi}{fibrinEFF}
A map $f:  (B, \beta, \mathcal{B}) \to (A, \alpha, \mathcal{A})$ in $\mathbb{EFF}$ is a \emph{fibration} if:
\begin{enumerate}
\item for any $b, a$ and $\pi: f(b) \to a$ one can effectively find $b', \rho: b \to b'$ such that $f(b') = a$ and $f(\rho) = \pi$ (meaning: there are partial computable functions $\varphi$ and $\psi$ such that for any $b, a$ and $\pi: f(b) \to a$ there are $b', \rho: b \to b'$ with $f(b') = a$ and $f(\rho) = \pi$ and with $\varphi$ computing the realizer of $b'$ from $\pi$ and realizers for $a$ and $b$ and with $\psi$ computing $\rho$ from $\pi$ and realizers for $a$ and $b$).
\item for any $b, b' \in B$, $\rho \in \mathcal{B}(b,b')$ and $\pi \in \mathcal{A}(fb,fb')$ one can compute $\rho' \in \mathcal{B}(b,b')$ with $f(\rho') = \pi$.
\end{enumerate}
\end{defi}

\begin{rema}{secondcondition} The second (somewhat remarkable) condition here is easy to miss, but with a bigroupoidal intuition it is clear that it should be there: it states a lifting condition for the invisible 2-cells.
\end{rema}

\begin{theo}{EFFisapathcat}
With the equivalences and fibrations as defined above, $\mathbb{EFF}$ is a path category.
\end{theo}
\begin{proof} We verify the axioms. It is easy to see that axioms 1 and 2 for a path category are satisfied, and because the  homotopy relation is a congruence,  axiom 4 follows as well.

Let us first construct pullbacks of fibrations. If $f: (B, \beta, \mathcal{B}) \to (A, \alpha, \mathcal{A})$ is a fibration and $g: (C, \gamma, \mathcal{C}) \to (A, \alpha, \mathcal{A})$ is any map, then its pullback is:
\begin{eqnarray*}
D & = & \{ \, (c,b) \, : \, g(c) = f(b) \, \} \\
\delta(c,b) & = & (\gamma(c), \beta(b)) \\
\mathcal{D}((c,b),(c',b')) & = & \{ (\pi \in \mathcal{C}(c,c'), \rho \in \mathcal{B}(b,b')) \, : \, g(\pi) = f(\rho) \, \}
\end{eqnarray*}
Note that one has to use the second condition in the definition of a fibration to define the bigroupoid structure on $(D, \delta, \mathcal{D})$: for instance, if $d = (c,b) \in D$, then there are 1-cells $1_b \in \mathcal{B}(b,b)$ and $1_c \in \mathcal{C}(c,c)$. We have $g(1_c) \in \mathcal{A}(fb,fb)$, so by the second condition there is a 1-cell $\pi \in \mathcal{B}(b,b)$ with $f(\pi) = g(1_c)$ and hence $1_d := (1_c,\pi) \in \mathcal{D}(d,d)$. For the inverses and composition there are similar arguments. In addition, the obvious projection $(D, \delta, \mathcal{D}) \to (C, \gamma, \mathcal{C})$ is again a fibration (this takes care of axiom 3).

Let us now construct path objects (axiom 5). If $(A, \alpha, \mathcal{A})$ is any object, then its path object $(P, \pi, \mathcal{P})$ is:
\begin{eqnarray*}
P & = & \{ \, (a,a', \rho) \, : \, a,a' \in A, \rho \in \mathcal{A}(a,a') \, \} \\
\pi(a,a',\rho) & = & ( \alpha(a), \alpha(a'), \rho ) \\
\mathcal{P}((a,a',\rho),(b,b',\sigma)) & = & \{ \langle m, n \rangle \, : \, m \in \mathcal{A}(a,a'), n \in \mathcal{A}(b,b') \}
\end{eqnarray*}
Note that the notion of homotopy determined by these path objects is precisely the homotopy relation we defined earlier.

Finally, we make the following \emph{claim}: a fibration $f: (B, \beta, \mathcal{B}) \to (A, \alpha, \mathcal{A})$ is trivial precisely when it has a section $s$ such that $sf$ is homotopic to the identity. Then it is easy to see that trivial fibrations have sections and are stable under pullback (axioms 6 and 7). So suppose that $g: A \to B$ is a morphism such that $H: fg \simeq 1$. Then for every $a \in A$ there is a morphism $H_a: fga \to a$, and the first property of a fibration allows us to find an element $sa \in B$ and a morphism $K_a: ga \to sa$ such that $f(K_a) = H_a$. If $\pi: a \to a'$ is a 1-cell in $A$, then we can use the fact that $g$ is a morphism to find a 1-cell $sa \to sa'$. Using the second condition for a fibration we can choose $s(\pi): sa \to sa'$ such that $fs(\pi) = \pi$. This constructs a section $s$ of $f$, which is homotopic to $g$. This shows that the \emph{claim} about trivial fibrations is correct.
\end{proof}

\begin{prop}{hoisEff}  The homotopy category of $\mathbb{EFF}$ is equivalent to the effective topos.
\end{prop}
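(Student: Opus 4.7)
My plan is to exploit the classical characterisation of the effective topos as the ex/lex completion of the category $\mathrm{pAsm}$ of partitioned assemblies. The strategy is to define a functor $F\colon \mathbb{EFF} \to \Eff$, show that it descends to $\tilde F\colon \mathrm{Ho}(\mathbb{EFF}) \to \Eff$, and then verify that $\tilde F$ is fully faithful and essentially surjective.

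The underlying data $(A, \alpha)$ of any object of $\mathbb{EFF}$ is exactly a partitioned assembly, and the 1-cell datum $\mathcal{A}$ together with its reflexivity, inversion and composition trackings makes the span $\mathcal{A} \rightrightarrows (A,\alpha)$ into a pseudo-equivalence relation on $(A,\alpha)$ in $\mathrm{pAsm}$. Set $F(A, \alpha, \mathcal{A})$ to be the coequaliser $A/\mathcal{A}$ in $\Eff$, and send a morphism $f$ of $\mathbb{EFF}$ to the induced map between coequalisers. A homotopy $H\colon f \simeq g$ is by definition a tracked assignment $b \mapsto \mathcal{A}(fb, gb)$, which is precisely the witness that $(f, g)\colon (B, \beta) \to (A, \alpha) \times (A, \alpha)$ factors through $\mathcal{A}$ in $\mathrm{pAsm}$; hence $F(f) = F(g)$ in $\Eff$ and $F$ descends to $\tilde F$.

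Essential surjectivity of $\tilde F$ is then immediate from the ex/lex characterisation: every object of $\Eff$ is a quotient of some $(A, \alpha) \in \mathrm{pAsm}$ by a pseudo-equivalence relation $R$, and this data assembles into an object $(A, \alpha, R)$ of $\mathbb{EFF}$ which $F$ sends to the given quotient. Faithfulness is the converse of the homotopy argument above: if $F(f) = F(g)$, then, using that the partitioned assembly $(B, \beta)$ is a regular projective in $\Eff$, one recovers a tracked family $b \mapsto \mathcal{A}(fb, gb)$, that is, a homotopy $f \simeq g$. For fullness, again exploit that partitioned assemblies are regular projective: a morphism $A/\mathcal{A} \to B/\mathcal{B}$ in $\Eff$ lifts along the quotient map $(B, \beta) \twoheadrightarrow B/\mathcal{B}$ to a morphism $(A, \alpha) \to (B, \beta)$ in $\mathrm{pAsm}$, and the compatibility that this lift needs in order to induce the original morphism between quotients is exactly the tracked action on 1-cells required by the definition of a morphism in $\mathbb{EFF}$.

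The main obstacle is the bookkeeping needed to identify the algebraic structure on an object of $\mathbb{EFF}$ with that of a pseudo-equivalence relation in $\mathrm{pAsm}$ (and, in the other direction, to package such a relation as the 1-cell data of an $\mathbb{EFF}$-object) so that $F$ becomes a well-defined functor; similarly one has to check carefully that the projectivity lift used for fullness furnishes an honest morphism of $\mathbb{EFF}$, not just a compatible morphism of partitioned assemblies. Both verifications are tedious but routine, since the two sides of the purported equivalence are ultimately presented by the same realizability data.
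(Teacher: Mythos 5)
Your argument is essentially correct, but it verifies the statement by a different route than the paper. The paper constructs the comparison functor $P: \mathbb{EFF} \to \Eff$ completely explicitly: $(A,\alpha,\mathcal{A})$ is sent to $(A,=_A)$ with $a =_A a'$ coded from $\alpha(a),\alpha(a')$ and $\mathcal{A}(a,a')$, fullness and the equivalence ``$Pf = Pg$ iff $f \simeq g$'' are checked by hand, and essential surjectivity is shown by exhibiting, for each $(X,=_X)$, the object with underlying set $\{\langle x,n\rangle : n \in x =_X x\}$ and $\mathcal{A}(\langle x,n\rangle,\langle x',n'\rangle) = x =_X x'$ --- which is exactly the canonical partitioned-assembly cover you would use. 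Your functor is the same one up to presentation (the quotient of the partitioned assembly $(A,\alpha)$ by the pseudo-equivalence relation packaged from $\mathcal{A}$), but you outsource the verifications to the Robinson--Rosolini ex/lex characterisation of $\Eff$ and to regular projectivity of partitioned assemblies; the paper in fact remarks that this exact-completion picture gives a closely related (but distinct) presentation of $\Eff$ as a homotopy category via ${\rm Ex}(\mathbf{PAsm})$. What your approach buys is conceptual economy; what the paper's buys is self-containedness and constructivity: its only delicate point is flagged explicitly, namely that fullness requires a formulation of morphisms in $\Eff$ in which functionality comes with a choice of values, and your projectivity arguments hide the same issue (projectivity of partitioned assemblies is a choice-flavoured fact unless $\Eff$ is set up that way). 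Two small points to tighten: (i) in the fullness step, lifting $(A,\alpha) \to B/\mathcal{B}$ through $(B,\beta) \twoheadrightarrow B/\mathcal{B}$ only gives the action on 0-cells; the tracked action on 1-cells required by the definition of a morphism in $\mathbb{EFF}$ is not automatic from compatibility with the quotients (which is only an internal existence statement) and needs a second projectivity lift, applied to the partitioned assembly of 1-cells of $\mathcal{A}$ mapping into the kernel pair of the quotient of $(B,\beta)$; (ii) you should note that the homotopy relation you use to descend $F$ is the one induced by the path objects of $\mathbb{EFF}$, which the paper checks coincides with the explicitly defined tracked-family notion. Neither point is a gap in substance, only in the bookkeeping you already flag as routine.
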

\begin{proof}
We will assume familiarity with the effective topos $\Eff$ (see, for instance, \cite{vanoosten08}). Consider the functor \[ P: \mathbb{EFF} \to \Eff \]
sending $(A, \alpha, \mathcal{A})$ to $(A, =_A)$ with
\[ a =_A a' = \{ <n,\pi,n'> \, : \, n = \alpha(a), n' = \alpha(a'), \pi \in \mathcal{A}(a,a') \} \]
and $f: (B, \beta, \mathcal{B}) \to (A, \alpha, \mathcal{A})$ to $F: A \times B \to {\rm Pow}(\mathbb{N})$ with
\[ F(b,a) = \{ <m, \pi, n> \, : \, m = \beta(b), n = \alpha(a), \pi \in \mathcal{A}(fb,a). \} \]
This functor $P$ is full and for two parallel arrows $f, g$ in $\mathbb{EFF}$ we have $Pf = Pg$ precisely when $f$ and $g$ are homotopic. So it remains to verify that $P$ is essentially surjective.

So let $(X, =_X)$ be an object in the effective topos. If $(A, \alpha, \mathcal{A})$ is the object in $\mathbb{EFF}$ with
\begin{eqnarray*}
A & = & \{ <x,n> \, : \, x \in X, n \in x =_X x \} \\
\alpha(<x,n>) & = & n \\
\mathcal{A}(<x,n>,<x',n'>) & = & x =_X x'
\end{eqnarray*}
then $P(A, \alpha, \mathcal{A}) \cong (X, =_X)$ in $\Eff$.
\end{proof}

\begin{rema}{onchoiceandEFF} In a metatheory without the axiom of choice, the effective topos can be defined in different ways. To prove that the functor $P$ from the previous proof is full, we need a version where functionality of a morphism $F: (B, =_B) \to (A, =_A)$ in $\Eff$ means that we have a function $f$ picking for every $b \in B$ and $n \in b=_B b$ an element $a = f(b, n) \in A$ such that from $n$ one can compute an element in $F(b,a)$.
\end{rema}

\begin{rema}{onrosolini} The contents from this section are very close to an earlier argument by Rosolini in \cite{rosolini16}, where he also constructs the effective topos as a homotopy quotient. The main difference is that here we obtain the effective topos as the homotopy category of a path category, which also involves finding a suitable class of fibrations; in addition, we think of the objects in the path category as degenerate bigroupoids, rather than as 2-groupoids.
\end{rema}

\begin{rema}{alsofromearlierpaper}
It also follows from \cite{bergmoerdijk18} that the effective topos arises as a homotopy category: since the effective topos is the exact completion of the category of partitioned assemblies (see \cite{robinsonrosolini90}), the effective topos is the homotopy category of ${\rm Ex}({\bf PAsm})$, where ${\rm Ex}$ is the homotopy exact completion from \cite{bergmoerdijk18} and ${\bf PAsm}$ is the category of partitioned assemblies regarded as a path category by declaring every map to be a fibration and the isomorphisms to be the equivalences. The construction in this section is similar, but different: indeed, in ${\rm Ex}({\bf PAsm})$ not every fibration would be a fibration of sets, as happens for $\mathbb{EFF}$ (see \refprop{inEFFeverythingset} below).
\end{rema}

\section{Function spaces in $\mathbb{EFF}$}

The purpose of this section is to show that $\mathbb{EFF}$ has homotopy $\Pi$-types. We will only give the constructions here, as the verifications are both straightforward and tedious.

\begin{prop}{EFFhasexp}
The path category $\mathbb{EFF}$ has homotopy exponentials.
\end{prop}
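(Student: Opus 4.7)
The plan is to build $X^Y$ by currying at the level of 0-cells and to use \emph{coded homotopies} as 1-cells. Writing $X = (X, \chi, \mathcal{X})$ and $Y = (Y, \eta, \mathcal{Y})$, I would let the underlying set of $X^Y$ consist of pairs $(f, \langle e_0, e_1 \rangle)$ where $f : Y \to X$ is a morphism in $\mathbb{EFF}$ and $\langle e_0, e_1 \rangle$ is an explicit choice of G\"odel numbers tracking the 0- and 1-cell actions of $f$; the realizer of such an element is simply $\langle e_0, e_1 \rangle$. The set of 1-cells from $(f, \phi)$ to $(g, \psi)$ in $X^Y$ would then be the set of G\"odel numbers of partial computable functions that, given a realizer of $y \in Y$, compute an element of $\mathcal{X}(fy, gy)$; in other words, the coded homotopies $f \simeq g$. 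The bigroupoid operations (identity, inverse, composition) on these 1-cells come from those of $\mathcal{X}$ applied pointwise and are tracked uniformly in the ambient realizers by s-m-n.

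Next I would define $\mathrm{ev} : X^Y \times Y \to X$ on 0-cells by $((f, \phi), y) \mapsto f(y)$, tracked by extracting $e_0$ from $\phi$ and applying it to the realizer of $y$. On 1-cells a pair $(\alpha, \rho)$, with $\alpha$ a coded homotopy $f \simeq g$ and $\rho \in \mathcal{Y}(y, y')$, would be sent to $g(\rho) \circ \alpha_y \in \mathcal{X}(fy, gy')$, where $\alpha_y$ is the coded homotopy evaluated at $y$ and $g(\rho)$ uses the 1-cell action of $g$ tracked by $e_1$.

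For the universal property, a morphism $h : A \times Y \to X$ yields a currified $H : A \to X^Y$ with $H(a) = (h_a, \phi_a)$, where $h_a(y) = h(a, y)$ and $\phi_a$ is obtained uniformly from the realizer of $a$ and a tracking of $h$; on 1-cells, $\sigma \in \mathcal{A}(a, a')$ goes to the coded homotopy $y \mapsto h(\sigma, 1_y)$. By construction $\mathrm{ev}(H \times 1_Y) = h$ on 0-cells, and the constant-$1$ coded homotopy exhibits $\mathrm{ev}(H \times 1_Y) \simeq h$. Uniqueness up to homotopy follows by decurrying: any competitor $H'$ with $\mathrm{ev}(H' \times 1_Y) \simeq h$ supplies, via concatenation with the homotopy $\mathrm{ev}(H \times 1_Y) \simeq h$, a coded family of 1-cells $h(a, y) \to h'(a, y)$, and currifying this family in the $a$-variable yields a homotopy $H \simeq H'$ in $X^Y$.

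The bulk of the work is pure bookkeeping: one must keep track of the G\"odel numbers at each step (identity, inverse and composition of coded homotopies, the tracking of the curried 1-cell action, the trackings inherited by the bigroupoid laws) and verify that the second, ``bigroupoidal'' lifting clause in the definition of a morphism is preserved throughout. Because the 2-cells of $\mathbb{EFF}$ are codiscrete, no further coherences need to be checked, which is presumably why the author relegates these verifications to the reader.
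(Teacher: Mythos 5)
Your construction of the exponential is exactly the one the paper gives: underlying set consisting of morphisms together with chosen trackings for their 0- and 1-cells, realized by the coded pair of trackings, with 1-cells being coded homotopies. The additional details you supply (evaluation, currying via s-m-n, uniqueness up to homotopy) are just the "straightforward and tedious" verifications the paper leaves to the reader, so this is the same approach and it is correct.
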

\begin{proof}
If $B = (B, \beta, \mathcal{B})$ and $A = (A, \alpha, \mathcal{A})$ are two objects in $\mathbb{EFF}$, then the homotopy exponential $A^B$ can be constructed as follows:
\begin{itemize}
\item The elements of the underlying set are triples consisting of a morphism $f: B \to A$ and trackings for both its 0- and 1-cells.
\item Realized by a code for the pair consisting of the tracking for the 0- and 1-cells.
\item And the 1-cells between $f$ and $g$ are coded homotopies between $f$ and $g$.
\end{itemize}
This completes the construction.
\end{proof}

\begin{rema}{onhexpinEFF} The construction in the previous proposition does not yield a genuine exponential in $\mathbb{EFF}$, for more than one reason. First of all, there is some arbitrariness in how the evaluation morphism $A^B \times B \to A$ is defined on 1-cells. In addition, since trackings are not unique, there is no canonical choice for the exponential transpose.
\end{rema}

\begin{prop}{EFFhasPi}
The path category $\mathbb{EFF}$ has homotopy $\Pi$-types.
\end{prop}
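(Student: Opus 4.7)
The plan is to parameterise the construction of the homotopy exponential from \refprop{EFFhasexp} by the base, using the path lifting property of $f$ to manufacture dependent sections over the fibres.

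Given fibrations $f: (Y, \eta, \mathcal{Y}) \to (X, \xi, \mathcal{X})$ and $g: (Z, \zeta, \mathcal{Z}) \to (Y, \eta, \mathcal{Y})$, I would build $\Pi_f(g)$ as follows. Its underlying set consists of pairs $(x, \phi)$ where $x \in X$ and $\phi$ is a \emph{tracked dependent section} of $g$ along the fibre of $f$ at $x$: concretely, $\phi$ comprises a function assigning to every pair $(y, \pi)$ with $y \in Y$ and $\pi \in \mathcal{X}(x, f(y))$ an element $\phi(y, \pi) \in Z$ with $g(\phi(y, \pi)) = y$, and an analogous function on 1-cells, both admitting G\"odel-number trackings in the sense used throughout the paper. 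The realizer of $(x, \phi)$ is a code pairing $\xi(x)$ with these trackings. A 1-cell from $(x, \phi)$ to $(x', \phi')$ consists of a 1-cell $\pi \in \mathcal{X}(x, x')$ together with a tracked family providing, for each $(y, \sigma)$ with $\sigma \in \mathcal{X}(x', f(y))$, a 1-cell in $\mathcal{Z}(\phi(y, \sigma \circ \pi), \phi'(y, \sigma))$. The projection $(x, \phi) \mapsto x$ is a fibration: the first lifting condition is witnessed by precomposition, namely $\phi'(y, \sigma) := \phi(y, \sigma \circ \pi)$, and the second lifting condition is inherited from $f$ and $g$ together with the bigroupoidal structure.

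For the counit $\varepsilon: f^* \Pi_f(g) \to Z$, a point of the pullback is a triple $(y, x, \phi)$ with $f(y) = x$, and I set $\varepsilon(y, x, \phi) := \phi(y, 1_x)$, which lies over $y$ by construction. For the universal property, given $h: A \to X$ and $m: f^* h \to g$ over $Y$, I define $M: A \to \Pi_f(g)$ by $M(a) = (h(a), \phi_a)$, where $\phi_a(y, \pi)$ is constructed in three steps: first, use path lifting for the fibration $f$ applied to $\pi \in \mathcal{X}(h(a), f(y))$ (or its inverse) to produce $y' \in Y$ with $f(y') = h(a)$ and a 1-cell in $\mathcal{Y}$ connecting $y$ and $y'$ lying over $\pi$; second, apply $m$ to the element $(a, y') \in f^* h$ to obtain $m(a, y') \in Z$ lying over $y'$; third, apply path lifting for $g$ along the chosen 1-cell to transport $m(a, y')$ to an element $z \in Z$ lying over $y$, and set $\phi_a(y, \pi) := z$. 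The action of $\phi_a$ on 1-cells is handled analogously.

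The homotopy $\varepsilon \circ f^* M \simeq_Y m$ amounts to observing that $\phi_a(y, 1_{f(y)})$ is connected to $m(a, y)$ by a 1-cell in the fibre of $g$ over $y$: indeed, both arise from $m(a, y')$ by 1-cells lying over the same 1-cell of $\mathcal{Y}$, so composing one with the inverse of the other and applying the second fibration condition on $g$ yields a 1-cell over $1_y$. Uniqueness of $M$ up to fibrewise homotopy is a canonicity argument of the same flavour. The main obstacle I expect is not conceptual but purely the bookkeeping of realizers and trackings: every partial computable function in sight must be named and composed correctly, and the handling of invisible 2-cells in $\mathcal{X}$ and $\mathcal{Y}$ has to be compatible with the hom-set structure on the new object. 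This is precisely the ``straightforward and tedious'' character the paper warns about at the opening of the section.
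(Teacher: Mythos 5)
Your construction is correct in outline, but it is a genuinely different construction from the paper's, so let me compare. The paper takes the elements of $\Pi_f(g)$ over $a$ to be tracked sections $s$ of $g$ over the \emph{strict} fibre $B_a$, and encodes dependence on paths in the base through the transport machinery: a 1-cell from $(a,s)$ to $(a',s')$ is a path $\pi\colon a \to a'$ together with a coded homotopy between $s'\circ\Gamma^B_\pi$ and $\Gamma^C_\pi\circ s$, the transports coming from \refprop{transportisanequivalence}. With that choice the counit and the map $M$ of the universal property are immediate (one just restricts $m$ to strict fibres, $s_a(b)=m(a,b)$), and the lifting conditions are spent on fibrancy of the projection and on the coherence data. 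You instead index the section data by the \emph{homotopy} fibre --- pairs $(y,\pi)$ with $\pi\in\mathcal{X}(x,f(y))$ --- and compare across a base path by precomposition; this makes fibrancy of the projection essentially automatic and lets you define the object without fixing any transport structures, at the price of a fatter underlying set and of having to shuttle between strict and homotopy fibres (using both lifting conditions of $f$ and of $g$) when you manufacture $\phi_a$ from $m$ and verify $\varepsilon\circ f^*M\simeq_Y m$. Note in your favour that a fibrewise homotopy over $Y$ in $\mathbb{EFF}$ is just a tracked family of 1-cells of $\mathcal{Z}$ between values lying strictly over the same point, so the verticality you try to arrange via the second lifting condition of $g$ is not even needed. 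Two points you should pin down: first, the ``analogous function on 1-cells'' of $\phi$ must be indexed by 1-cells of the homotopy fibre in the locally codiscrete sense, i.e.\ by arbitrary $\rho\colon y\to y_1$ from $(y,\sigma)$ to $(y_1,\sigma_1)$ with no strict compatibility required between $f(\rho)\circ\sigma$ and $\sigma_1$; under a strict reading, the identity, inverse and composition 1-cells of your object (which need comparisons such as $\phi(y,\sigma\circ 1_x)\to\phi(y,\sigma)$, where the indices agree only up to an invisible 2-cell) and your precomposition lifts would not typecheck. Second, \refprop{discrfibrimprclassofsmallmaps} later cites the explicit construction from this proof to conclude that $\Pi_f(g)$ is standard discrete whenever $g$ is; your version also has this property (with $g$ standard discrete, the trackings packaged into the realizer of $(x,\phi)$ determine $\phi$ pointwise), so nothing downstream breaks, but anyone substituting your construction must check this as well.
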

\begin{proof}
If $(A, \alpha, \mathcal{A})$ is an object in $\mathbb{EFF}$ and $a \in A$, then this induces an arrow $a: 1 \to A$ sending the unique 0-cell in 1 to $a$ and the unique 1-cell in 1 to $1_a$. In addition, if $\pi \in \mathcal{A}(a,a')$, then this induces a homotopy between the parallel arrows $a, a': 1 \to A$, which we will also denote by $\pi$. So it follows from \refprop{transportisanequivalence} that if $f: (B, \beta, \mathcal{B}) \to (A, \alpha, \mathcal{A})$ is a fibration and $\pi \in \mathcal{A}(a,a')$, then $\pi$ induces an equivalence $\Gamma^B_\pi: B_a \to B_{a'}$, where $B_a$ and $B_{a'}$ are the pullbacks of $f$ along $a: 1 \to A$ and $a': 1 \to A$, respectively.

If $g: (C, \gamma, \mathcal{C}) \to (B, \beta, \mathcal{B})$ is another fibration, then the fibration
\[ \Pi_f(g): (D, \delta, \mathcal{D}) \to (A, \alpha, \mathcal{A}) \]
is constructed as follows. We define $(D, \delta, \mathcal{D})$ by:
\begin{itemize}
\item Elements of $D$ consist of an element $a \in A$, a section $s: B_a \to C$ of $g$ and trackings for the 0- and 1-cells of $s$.
\item A realizer of such an element is a coded triple consisting of the realizer of $a \in A$ and the trackings for the 0- and 1-cells of $s$.
\item A 1-cell between $(a, s)$ and $(a', s')$ consists of a code for a pair containing an element $\pi: a \to a'$ and a coded homotopy between $s' \circ \Gamma^B_\pi$ and $\Gamma^C_\pi \circ s$.
\end{itemize}
In addition, the map $\Pi_f(g)$ is obtained by projecting on $a$ and $\pi$. Lengthy verifications are now in order.
\end{proof}

\section{Discrete fibrations in $\mathbb{EFF}$}

Within the path category $\mathbb{EFF}$ one can define an impredicative class of small fibrations given by the discrete fibrations.

\begin{defi}{discretefibrationsinEFF} A fibration $f: (B, \beta, \mathcal{B}) \to (A, \alpha, \mathcal{A})$ is a \emph{standard discrete fibration} if for elements $b, b' \in B$ we have:
\begin{quote}
if $f(b) = f(b')$ and $\beta(b) = \beta(b')$, then $b = b'$.
\end{quote}
A fibration $f$ is \emph{discrete} if $f$ can be written as $gw$ with $w$ being an equivalence and $g$ a standard discrete fibration.
\end{defi}

\begin{rema}{onalternativedeffordiscrfibr}
We could also have said that a fibration $f$ is discrete if there a standard discrete fibration $g$ and an equivalence $w$ such that $f \simeq gw$, or $fw \simeq g$, or $fw = g$. By arguments as in \refprop{hlevelsstableunderpbk} one can show that all these definitions are equivalent.
\end{rema}

\begin{prop}{discrfibrimprclassofsmallmaps} The discrete fibrations form an impredicative class of small fibrations in that:
\begin{enumerate}
\item every isomorphism is a discrete fibration, 
\item discrete fibrations are stable under homotopy pullback,
\item discrete fibrations are closed under composition, and
\item discrete fibrations are closed under $\Pi_f$ with $f$ a (not necessarily discrete) fibration.
\end{enumerate}
\end{prop}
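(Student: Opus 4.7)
My plan is to handle the four parts separately, with (1) and (2) being essentially routine and the genuine work happening in (3). For (1), I would observe that an isomorphism in $\mathbb{EFF}$ is a bijection on underlying $0$-cells, so the implication in the definition of a standard discrete fibration is vacuous, and isomorphisms are fibrations by axiom~1. For (2), I would first check directly from the explicit pullback formula in the proof of \reftheo{EFFisapathcat} that strict pullbacks of standard discrete fibrations are standard discrete. Then, given a homotopy pullback square with $f$ discrete, I would choose $w: B' \to B$ equivalence and $g: B' \to A$ standard discrete with $g = fw$ (using \refrema{onalternativedeffordiscrfibr}), view $w$ as an equivalence in the slice $\ct{C}(A)$ between the fibrations $g$ and $f$, and apply the pullback functor $\phi^*: \ct{C}(A) \to \ct{C}(C)$, which preserves equivalences by \refprop{pullingback}. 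Composing the resulting equivalence between the strict pullbacks of $g$ and $f$ with the canonical equivalence supplied by the homotopy pullback hypothesis exhibits $h$ as a composite of an equivalence with a standard discrete fibration.

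The main obstacle is (3). The naive attempt to factor $f = gw$ and $f' = g'w'$ in the original form of the definition and then rewrite $f'f = g'w'gw$ as a composite of equivalence and standard discrete fibration fails, because the middle piece $w' \circ g$ need not even be a fibration, let alone of the required form. The trick is to use the ``$fw = g$'' variant of the definition instead: choose equivalences $w: B' \to B$, $w': A' \to A$ and standard discrete fibrations $g: B' \to A$, $g': A' \to X$ with $g = fw$ and $g' = f'w'$. I would then form the strict pullback
\begin{center}
\begin{tikzcd}
B'' \ar[r, "\pi_2"] \ar[d, "\pi_1"'] & B' \ar[d, "g"] \\
A' \ar[r, "w'"'] & A
\end{tikzcd}
\end{center}
in $\mathbb{EFF}$. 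Since $g$ is a standard discrete fibration, its pullback $\pi_1$ is also standard discrete; since $w'$ is an equivalence and $g$ is a fibration, $\pi_2$ is an equivalence by \cite[Proposition~2.7]{bergmoerdijk18}. Setting $\tilde{w} := w\pi_2$ (a composite of equivalences) and $\tilde{g} := g'\pi_1$ (a composite of standard discrete fibrations, which is itself standard discrete since the tracking for a morphism in $\mathbb{EFF}$ sends equal input realizers to equal output realizers), the pullback equation $g\pi_2 = w'\pi_1$ yields $f'f\tilde{w} = f'g\pi_2 = f'w'\pi_1 = g'\pi_1 = \tilde{g}$, exhibiting $f'f$ as a discrete fibration.

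For (4), I would first reduce to the standard discrete case by writing $g = g_0 v$ with $g_0$ standard discrete and $v$ an equivalence over $Y$; since $\Pi_f$ preserves equivalences in its argument (as recalled in the proof of \refprop{hlevelsandPi}), this yields an equivalence $\Pi_f(g) \simeq_X \Pi_f(g_0)$, so it suffices to show $\Pi_f(g_0)$ is standard discrete. For this I would inspect the construction in \refprop{EFFhasPi}: elements of the underlying set are triples $(a, s, \tau)$ with $\tau$ coding trackings for the $0$- and $1$-cell actions of the section $s: B_a \to C_a$ of $g_0$, and the realizer of such a triple is coded from $\alpha(a)$ and $\tau$. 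Two such triples that agree on $a$ and on realizer therefore share $\tau$; for each $b \in B_a$ the elements $s(b), s'(b) \in C_a$ both satisfy $g_0(-) = b$ and have equal realizers (computed via the shared $0$-cell tracking from the realizer of $b$), so standard discreteness of $g_0$ forces $s(b) = s'(b)$, and the $1$-cell actions then agree because they are computed by the same $1$-cell tracking on the same inputs.
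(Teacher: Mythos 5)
Your proposal is correct and follows essentially the same route as the paper: verify the four closure properties for \emph{standard} discrete fibrations (isomorphisms, strict pullbacks, composition, and the explicit $\Pi_f$-construction of \refprop{EFFhasPi}) and then transfer them to general discrete fibrations via the equivalent formulations of discreteness and equivalence-stability arguments in the style of \refprop{hlevelsstableunderpbk}. The only difference is that you spell out the reductions (notably the auxiliary pullback trick for composition) which the paper compresses into ``From this the statement follows'', pointing to \refrema{onalternativedeffordiscrfibr}.
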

\begin{proof}
It is easy to see that every isomorphism is a standard discrete fibration, that standard discrete fibrations are stable under pullback, closed under composition and that $\Pi_f(g)$ as constructed in \refprop{EFFhasPi} is a standard discrete fibration whenever $f$ and $g$ are fibrations and $g$ is standard discrete. From this the statement follows.
\end{proof}

\begin{rema}{otherdiscretethings}
\begin{enumerate}
\item It also follows from the construction of path objects in $\mathbb{EFF}$ as in \reftheo{EFFisapathcat} that $(s, t): PX \to X \times X$ is discrete for any object $X$. (In fact, we have that $P_I(X) \to X \times_I X$ is discrete for any fibration $X \to I$.)
\item One also easily verifies that $\mathbb{EFF}$ has homotopy sums (as in \cite{bergmoerdijk18}) and that for any two discrete fibrations $f: B \to A$ and $g: D \to C$, the map $f + g: B + D \to A + C$ is again discrete, and that for any two discrete fibrations $f: B \to A$ and $g: C \to A$ the map $[f,g]: B + C \to A$ is again discrete.
\item We can call an object $A$ discrete if the map $A \to 1$ is a discrete fibration. So, basically, an object $(A, \alpha, \mathcal{A})$ is discrete if $\alpha$ is injective, or if it is homotopic to such an object (see also the definition of the category $\mathbb{DISC}$ in Section 11 below). Examples of such discrete objects are the terminal object, its finite coproducts (0,1,2), as well as the homotopy natural numbers object $(N, \nu, \mathcal{N})$ with:
\begin{eqnarray*}
N & = & \mathbb{N} \\
\nu(n) & = & \{ n \} \\
\mathcal{N}(n,n) & = &  \{ 0 \} \\
\mathcal{N}(n, m) & = & \emptyset \mbox{ whenever } n \not= m.
\end{eqnarray*}
\end{enumerate}
\end{rema}

According to \cite{hylandetal90}, Peter Freyd suggested defining discrete maps in $\Eff$ by an internal orthogonality condition. It turns out that the same is possible in $\mathbb{EFF}$. First of all, let $\mathbb{J} = (J, j, \mathcal{J})$ be the object in $\mathbb{EFF}$ with
\begin{eqnarray*}
J & = & \{ 0, 1 \} \\
j(i) & = & 0 \\
\mathcal{J}(i,i) & = &  \{ 0 \} \\
\mathcal{J}(i, k) & = & \emptyset \mbox{ whenever } i \not= k.
\end{eqnarray*}
If $(A, \alpha, \mathcal{A})$ is any object in $\mathbb{EFF}$, then the homotopy exponential $A^\mathbb{J}$ can be computed as follows:
\begin{enumerate}
 \item The underlying set of $A^\mathbb{J}$ consists of pairs of elements $(a_0,a_1)$ in $A$ with $\alpha(a_0) = \alpha(a_1)$.
 \item The realizer of such a pair $(a_0,a_1)$ is $\alpha(a_0) = \alpha(a_1)$.
 \item The set $\mathcal{A}((a_0,a_1),(b_0,b_1))$ is the intersection of $\mathcal{A}(a_0,b_0)$ and $\mathcal{A}(a_1,b_1)$.
\end{enumerate}
Moreover, there is an obvious diagonal map $A \to \mathcal{A}^\mathbb{J}$ which copies the 0- and 1-cells. If $f: B \to A$ is any map, then this induces a (strictly) commuting square
\begin{center}
\begin{tikzcd}
B \ar[r] \ar[d, "f"'] & B^\mathbb{J} \ar[d, "f^\mathbb{J}"] \\
A \ar[r] & A^\mathbb{J}, 
\end{tikzcd}
\end{center}
and if $f$ is a fibration, then so is $f^\mathbb{J}$.

\begin{prop}{discretealafreydinEFF} {\bf (ZFC)} The following are equivalent for a fibration $f: B \to A$:
\begin{enumerate}
 \item $f$ is discrete.
 \item The square above is a homotopy pullback.
 \item There is a computable function which given two elements $b_0,b_1 \in B$ with $f(b_0) = f(b_1)$ and $n = \beta(b_0) = \beta(b_1)$ computes an element in $\mathcal{B}(b_0,b_1)$ from $n$.
\end{enumerate}
\end{prop}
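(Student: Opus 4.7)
The plan is to close the cycle $(1) \Rightarrow (3) \Rightarrow (2) \Rightarrow (3) \Rightarrow (1)$, with the axiom of choice entering only in the last step; the other three implications hold constructively. The key technical observation driving $(3) \Leftrightarrow (2)$ is that (3) provides a \emph{single} partial computable function $\phi$, independent of the chosen witnesses $b_0, b_1$, producing an element of $\mathcal{B}(b_0, b_1)$ from $n = \beta(b_0) = \beta(b_1)$; consequently, for any fixed pair $b_0, b_1$ with $f(b_0) = f(b_1)$ and common realizer $n$, the same natural number $\phi(n)$ lies both in $\mathcal{B}(b_0, b_0)$ (apply $\phi$ to the pair $(b_0, b_0)$) and in $\mathcal{B}(b_0, b_1)$, so in the intersection $\mathcal{B}^{\mathbb{J}}((b_0, b_0), (b_0, b_1))$.

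For $(1) \Rightarrow (3)$, I would write $f = g w$ with $g \colon B' \to A$ standard discrete and $w \colon B \to B'$ an equivalence. Given $b_0, b_1 \in B$ with $f(b_0) = f(b_1)$ and $\beta(b_0) = \beta(b_1) = n$, the trackings of $w$ force $\beta'(w(b_0)) = \beta'(w(b_1))$, and with $g(w(b_0)) = g(w(b_1))$ standard discreteness of $g$ gives $w(b_0) = w(b_1)$. Choosing a homotopy inverse $w^{-1}$ and a tracked homotopy $H \colon w^{-1} w \simeq 1_B$, the composite $H_{b_1} \circ H_{b_0}^{-1}$, formed using the inverse and composition on 1-cells in $\mathcal{B}$, yields a 1-cell $b_0 \to b_1$ computable from $n$ alone.

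For $(3) \Rightarrow (2)$, set $P = A \times_{A^\mathbb{J}} B^\mathbb{J}$ and let $d \colon B \to P$ be the canonical map $b \mapsto (f(b), (b, b))$ and $q \colon P \to B$ the projection $(a, (b_0, b_1)) \mapsto b_0$. Then $q d = 1_B$ strictly, while the intersection property of $\phi(n)$ described above supplies at each $p \in P$ a 1-cell $dq(p) \to p$ in $P$, assembling to a tracked homotopy $d q \simeq 1_P$; hence $d$ is a homotopy equivalence and thus an equivalence by \reftheo{eqishomeq}. Conversely, for $(2) \Rightarrow (3)$, from any tracked homotopy $d q \simeq 1_P$ one reads off, at $p = (a, (b_0, b_1))$, the $B^\mathbb{J}$-component $\rho$ of the 1-cell $dq(p) \to p$; this $\rho$ is a single number in $\mathcal{B}(q(p), b_0) \cap \mathcal{B}(q(p), b_1)$, and its two readings can be combined by inversion and composition of 1-cells to produce an element of $\mathcal{B}(b_0, b_1)$ computable from $n$.

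The main obstacle, and the only place AC enters, is $(3) \Rightarrow (1)$. I would form the quotient $B' = B/\!\sim$ with $b \sim b'$ iff $f(b) = f(b')$ and $\beta(b) = \beta(b')$, and use AC to fix a representative function $r \colon B' \to B$. Setting $\beta'([b]) = \beta(b)$, $g([b]) = f(b)$, and $\mathcal{B}'([b], [b']) = \mathcal{B}(r[b], r[b'])$ makes $g \colon B' \to A$ standard discrete with $g w = f$, where $w \colon B \to B'$ is the quotient map. Then $r$ is a homotopy inverse to $w$: $w r$ is strictly the identity, while $r w \simeq 1_B$ is supplied by (3) applied to the pair $(b, r[b])$, which agree under $f$ and have equal realizers. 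The routine verifications that $g$ satisfies both fibration conditions of \refdefi{fibrinEFF} and that $B'$ inherits computable groupoid operations follow from the corresponding data for $f$ and $B$.
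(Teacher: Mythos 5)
Your proposal is correct and takes essentially the same route as the paper: condition (2) is unwound as the diagonal $B \to \{(b_0,b_1) : f(b_0)=f(b_1),\ \beta(b_0)=\beta(b_1)\}$ being an equivalence (reduced, via the strict retraction, to a single homotopy $dq \simeq 1$, whose components are exactly the data in (3)), and AC enters only in (3) $\Rightarrow$ (1) through a choice of representatives for that same equivalence relation. The only cosmetic differences are that you prove (1) $\Rightarrow$ (3) directly from a homotopy inverse of $w$ where the paper instead notes that for a standard discrete fibration the square is a genuine pullback, and that you package the AC step as a quotient map with a chosen section rather than an AC-chosen subset of representatives with its inclusion.
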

\begin{proof}
(1) $\Rightarrow$ (2): If $f$ is a standard discrete fibration, then the square above is actually a genuine pullback. Since the statement in (2) is homotopy invariant, the implication follows.

(2) $\Rightarrow$ (3): The statement in (2) means that the diagonal map
\[ d: B \to \{ (b_0,b_1) \, : \, f(b_0) = f(b_1), \beta(b_0) = \beta(b_1) \} \]
is an equivalence. Since we have $\pi d = 1$ where $\pi$ is the projection on the first coordinate, $d$ is an equivalence precisely when $d \pi \simeq 1$. Unwinding this gives the statement in (3).

(3) $\Rightarrow$ (1): Consider the equivalence relation $\sim$ on $B$ defined by
\[ b_0 \sim b_1 \mbox{ if and only if } f(b_0) = f(b_1) \mbox{ and } \beta(b_0) = \beta(b_1), \]
and choose $B'$ to be a subset of $B$ which contains precisely one element from each equivalence class. The inclusion of $B'$ in $B$ determines a map $(B',\beta,\mathcal{B}) \to (B, \beta, \mathcal{B})$ with $\beta$ and $\mathcal{B}$ on $B'$ just being the restrictions to $B'$. The statement in (3) implies that this map is an equivalence, while its composition with $f$ is a standard discrete fibration by construction.
\end{proof}

\begin{rema}{notinterval} Superficially, $\mathbb{J}$ may look a bit like an interval, but it is really rather different from the object which should be considered as the interval object in $\mathbb{EFF}$, which is $\mathbb{I} = (I, \iota, \mathcal{I})$ with
\begin{eqnarray*}
I & = & \{ 0, 1 \} \\
\iota(i) & = & i \\
\mathcal{I}(i,j) & = &  \{ 0 \}.
\end{eqnarray*}
Indeed, for this object we have that $\mathbb{I} \to 1$ is a trivial fibration and that $PX \simeq X^\mathbb{I}$, while $\mathbb{J} \to 1$ is far from a trivial fibration and we have $X \simeq PX \simeq X^\mathbb{J}$ only if $X$ is discrete. Since $\mathbb{J}$ is a version of $\nabla(2)$, this also shows that our work here is rather different in spirit from that in \cite{vanoosten15} and \cite{fruminvandenberg17}. Another difference is that these papers seek to define homotopy-theoretic structures on the effective topos and do not regard it as a homotopy-theoretic quotient of some other category.
\end{rema}

\section{Propositions in $\mathbb{EFF}$}

It is not clear to me if the class of discrete fibrations has a representation: indeed, this is the same problem as the one whether the class of discrete maps (defined locally) has a representation in $\Eff$, which is still open. It will be possible to show, using a standard argument, that no representation of the class of discrete fibrations in $\mathbb{EFF}$ can be univalent  (see \refcoro{nounivreprfordiscrfibr} below). 

But first we will show that one can obtain a univalent representation if we restrict to the discrete propositional fibrations. As a first step in this direction, we will explicitly construct propositional truncations in $\mathbb{EFF}$.

\begin{prop}{existofproptrinEFF}
(Existence of propositional truncation) Every fibration $f: B \to A$ factors as a map $g: B \to C$ followed by a fibration of propositions $h: C \to A$, in a ``universal'' way: if $g': B \to C'$ and $h': C' \to A$ is another such factorisation, there is a map $d: C \to C'$ over $A$ with $dg \simeq_A g'$, with the map $d$ itself being unique up to fibrewise homotopy over $A$ with this property.
\end{prop}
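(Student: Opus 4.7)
The plan is to realize the propositional truncation by keeping the underlying set of $B$ but ``killing'' all the higher information coming from $\mathcal{B}$, replacing it with information pulled back from $A$. Concretely, I propose to define $(C, \gamma, \mathcal{C})$ by setting $C = B$, $\gamma = \beta$, and
\[ \mathcal{C}(b, b') \; = \; \mathcal{A}(f(b), f(b')), \]
with identity, inverse and composition 1-cells inherited from those in $\mathcal{A}$ (which is possible since the realizer of $f(b)$ is computable from $\beta(b)$). Define $g \colon B \to C$ to be the identity on underlying sets with $g(\rho) = f(\rho)$ on 1-cells, and define $h \colon C \to A$ by $h(b) = f(b)$ on 0-cells and by the identity on 1-cells (since $\mathcal{C}(b,b') = \mathcal{A}(hb,hb')$). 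Then $hg = f$ on the nose.

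The next step is to verify that $h$ is a fibration of propositions. For the first condition of a fibration, a lift of a 1-cell $\pi \colon f(b) \to a'$ in $A$ is produced by applying the corresponding lifting property of $f$ to obtain $b', \rho' \colon b \to b'$ in $B$ with $f(b') = a'$, and then taking the 1-cell $\pi \in \mathcal{A}(f(b), f(b')) = \mathcal{C}(b, b')$. The second (``invisible 2-cell'') condition is trivial by the very definition of $\mathcal{C}$. To see that $h$ is propositional, note that whenever $f(b) = f(b')$ the element $1_{f(b)} \in \mathcal{A}(f(b), f(b')) = \mathcal{C}(b,b')$ provides a canonical 1-cell between $b$ and $b'$, computable from $\beta(b)$; from this one reads off a section of the fibration $P_A(C) \to C \times_A C$ which is, moreover, fibrewise homotopic to the identity, so this fibration is trivial.

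For the universal property, given a second factorisation $g' \colon B \to C'$ through a fibration of propositions $h' \colon C' \to A$, define $d \colon C \to C'$ by $d(b) = g'(b)$ on 0-cells (so that $h'd = h$ automatically on objects). The question is how to define $d$ on a 1-cell $\rho \in \mathcal{C}(b,b') = \mathcal{A}(fb, fb')$ in such a way that $h'(d\rho) = \rho$. The idea is to first apply the lifting property of $h'$ to $g'(b)$ and $\rho$, obtaining some $c \in C'$ and $\tau \in \mathcal{C}'(g'(b), c)$ with $h'(c) = f(b')$ and $h'(\tau) = \rho$; then use the propositionality of $h'$ (i.e.\ the section of $P_A(C') \to C' \times_A C'$) to produce a 1-cell $\eta \in \mathcal{C}'(c, g'(b'))$ over $1_{fb'}$; and finally compose to set $d(\rho) := \eta \circ \tau$, which satisfies $h'(d\rho) = \rho$. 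All ingredients are effective in the given realizers. The fibrewise homotopy $dg \simeq_A g'$ is witnessed by the identity 1-cells $1_{g'(b)}$, and uniqueness of $d$ up to fibrewise homotopy over $A$ follows once more from propositionality of $h'$, applied to the pair $(d_1(b), d_2(b))$ in the fibre of $h'$ over $f(b)$.

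The main obstacle, and the only step requiring real care, is the definition of $d$ on 1-cells: one has to combine the two distinct uses of $h'$ being a propositional fibration (lifting of 1-cells via the fibration property, and existence of a canonical 1-cell between fibrewise equal elements via propositionality) in an effective way. Everything else amounts to routine bookkeeping with realizers and the bigroupoidal identities, so I would only sketch these checks and refer to the analogous arguments used in the verification of \reftheo{EFFisapathcat}.
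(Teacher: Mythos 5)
Your construction of the truncation is exactly the one the paper uses ($C = B$, $\gamma = \beta$, $\mathcal{C}(b,b') = \mathcal{A}(fb,fb')$), and your verifications that $h$ is a propositional fibration and your uniqueness argument are fine. The one step that does not work as written is the definition of $d$ on 1-cells. You set $d(\rho) := \eta \circ \tau$ and claim $h'(d\rho) = \rho$, but nothing forces this: morphisms in $\mathbb{EFF}$ carry no functoriality data whatsoever (they are just functions on 0- and 1-cells, as the remark on the bigroupoidal reading of $\mathbb{EFF}$ emphasises), so $h'(\eta \circ \tau)$ need not be $h'(\eta) \circ h'(\tau)$, and even $1_{fb'} \circ \rho$ need not equal $\rho$, since the composition operation on $\mathcal{A}$ satisfies no strict laws. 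The same issue already affects your intermediate claim that the section of $P_A(C') \to C' \times_A C'$ produces $\eta$ lying ``over $1_{fb'}$''. This matters because $d$ must satisfy $h'd = h$ on the nose (on 1-cells as well as 0-cells): being a map ``over $A$'' is a strict condition, and without it the fibrewise homotopies $dg \simeq_A g'$ and the uniqueness statement are not even well-formed.

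The repair is short: your lift-then-connect step is the right way to produce \emph{some} element of $\mathcal{C}'(g'(b), g'(b'))$, computably in the realizers; then apply the second condition in the definition of a fibration in $\mathbb{EFF}$ (the lifting condition for the ``invisible 2-cells'') to $h'$, with the 1-cell $\eta \circ \tau \in \mathcal{C}'(g'(b), g'(b'))$ and the target 1-cell $\rho \in \mathcal{A}(fb, fb')$, to obtain $d(\rho) \in \mathcal{C}'(g'(b), g'(b'))$ with $h'(d\rho) = \rho$ exactly. With that correction your argument goes through and supplies precisely the verifications the paper leaves to the reader.
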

\begin{proof}
Let me just give the construction: if $f: (B, \beta, \mathcal{B}) \to (A, \alpha, \mathcal{A})$ is a fibration, then we define $C := (B, \beta, \mathcal{C})$ with $\mathcal{C}(b,b') = \mathcal{A}(fb,fb')$. It is now easy to see that $C \to C \times_A C$ is an equivalence, turning $C \to A$ into a propositional fibration; the remaining verifications are also easy and left to the reader. 
\end{proof}

Note that this means that, up to equivalence, any propositional fibration \[ f: (B, \beta, \mathcal{B}) \to (A, \alpha, \mathcal{A}) \] can be assumed to satisfy $\mathcal{B}(b,b') = \mathcal{A}(fb,fb')$; if $f$ is also discrete, we may even assume that $B \subseteq A \times \mathbb{N}$ with $f$ being the projection on the first and $\beta$ the projection on the second coordinate (the reason for this is that propositional truncation as in the proof of the previous proposition preserves standard discrete fibrations). This is quite helpful when trying to prove:

\begin{theo}{discrpropsclassified}
The class of discrete propositional fibrations in $\mathbb{EFF}$ is an impredicative class of small fibrations with a univalent representation.
\end{theo}
\begin{proof}
It follows from \reftheo{hlevelsandPi} and \refprop{discrfibrimprclassofsmallmaps} that the discrete propositional fibrations form an impredicative class of small fibrations, so it remains to construct a univalent representation $\pi: E \to U$. We do this as follows:
\begin{itemize}
\item $U$ consists of subsets $X$ of $\mathbb{N}$
\item any such $X$ is realized by 0, 
\item a 1-cell between $X$ and $Y$ in $U$ consists of realizers $r: X \to Y$ and $s: Y \to X$ (that is, $r$ is a G\"odel number of a partial recursive function which is defined on every element in $X$ and which on these elements in $X$ outputs an element in $Y$; similarly for $s$),
\end{itemize}
while:
\begin{itemize}
\item $E$ consists of pairs $(X,x)$ with $X$ a subset of $\mathbb{N}$ and $x \in X$,
\item the pair $(X,x)$ is realized by $x$,
\item a 1-cell between $(X,x)$ and $(Y,y)$ is the same as a 1-cell between $X$ and $Y$.
\end{itemize}
This is clearly a discrete propositional fibration, and to see that it is a univalent representation, consider a map $f: (B, \beta, \mathcal{B}) \to (A, \alpha, \mathcal{A})$ and assume that $\mathcal{B}(b,b') = \mathcal{A}(fb,fb')$ and $B \subseteq A \times \mathbb{N}$ with $f$ being the projection on the first and $\beta$ the projection on the second coordinate. This gives rise to map $k: A \to U$ which is obtained by sending $a \in A$ to $B_a = \{ n \in \mathbb{N} \, : \, (a,n) \in B \}$ and by sending $\pi \in \mathcal{A}(a,a')$ to a tracking of the equivalence $\Gamma^B_\pi: B_a \to B_{a'}$. The remaining verifications are left to the reader.
\end{proof}

\begin{prop}{inEFFeverythingset}
In $\mathbb{EFF}$ every fibration is a fibration of sets.
\end{prop}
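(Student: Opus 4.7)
The strategy is to unfold what it means to be a fibration of sets and reduce the claim to a concrete computation in $\mathbb{EFF}$. By \refdefi{hlevels}, a fibration $f: B \to A$ is a fibration of sets iff the fibrewise path object $P_A(B) \to B \times_A B$ is a fibration of propositions, which in turn means the projection $P_{B \times_A B}(P_A(B)) \to P_A(B) \times_{B \times_A B} P_A(B)$ is a trivial fibration. By the characterisation of trivial fibrations at the end of the proof of \reftheo{EFFisapathcat}, it suffices to produce a section $s$ of this projection such that $s$ composed with the projection is homotopic to the identity.

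First I would unfold $P_A(B)$ explicitly. Following the path object construction of \reftheo{EFFisapathcat}, applied in the slice $\mathbb{EFF}(A)$, its elements are triples $(b, b', \rho)$ with $f(b) = f(b')$ and $\rho \in \mathcal{B}(b, b')$, and its 1-cells from $(b, b', \rho)$ to $(c, c', \sigma)$ are pairs $(m, n)$ with $m \in \mathcal{B}(b, c)$, $n \in \mathcal{B}(b', c')$, and $f(m) = f(n)$. The decisive observation is that this 1-cell data is completely independent of $\rho$ and $\sigma$: this is the concrete manifestation of the ``locally codiscrete bigroupoid'' intuition of \refrema{thinkingofEFF}. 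One level further, a 1-cell in $P_{B \times_A B}(P_A(B))$ between two triples with common first two coordinates $((b, b', \rho_0), (b, b', \rho_1))$ is similarly built from pairs $(m, n)$ with $m \in \mathcal{B}(b, b)$, $n \in \mathcal{B}(b', b')$, and $f(m) = f(n)$, again depending only on the base data.

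The construction of $s$ therefore reduces to this computational task: given $b, b' \in B$ with $f(b) = f(b')$, manufacture, computably from the realizers $\beta(b)$ and $\beta(b')$, a pair $(m, n)$ with $m \in \mathcal{B}(b, b)$, $n \in \mathcal{B}(b', b')$, and $f(m) = f(n)$. The naive choice $m = 1_b$, $n = 1_{b'}$ satisfies the first two conditions but fails the third, since a morphism in $\mathbb{EFF}$ need not preserve identity 1-cells strictly. I expect this to be the only real obstacle, and it is resolved precisely by the second clause of \refdefi{fibrinEFF}: applied to $1_{b'} \in \mathcal{B}(b', b')$ and $\pi := f(1_b) \in \mathcal{A}(f(b'), f(b')) = \mathcal{A}(f(b), f(b))$, it yields computably an $n \in \mathcal{B}(b', b')$ with $f(n) = \pi = f(1_b)$. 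Setting $m := 1_b$ together with this $n$ produces the required section. The remaining verification that $s$ composed with the projection is homotopic to the identity unwinds, by the same codiscreteness phenomenon, to another instance of the same type of computational task, solved by the same trick. All bookkeeping about realizers and trackings is routine.
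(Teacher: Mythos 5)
Your proof is correct and follows essentially the same route as the paper: both write down the explicit fibrewise path object $P_A(B)$ (triples $(b,b',\rho)$ with $f(b)=f(b')$) and exploit that its higher cells do not depend on the path component $\rho$, so that any two elements of a fibre over $B \times_A B$ are computably connected, with clause (2) of the definition of a fibration used to reconcile the realizer bookkeeping. The paper simply records that $P_A(B) \to B \times_A B$ is a propositional fibration, whereas you unfold that definition one level further into a section-plus-homotopy for the next fibrewise path object; this is the same argument carried out in more detail.
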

\begin{proof}
If $f: (B, \beta, \mathcal{B}) \to (A, \alpha, \mathcal{A})$ is a fibration, then we can factor $B \to B \times_A B$ as an equivalence $B \to P$ followed by a fibration $P \to B \times_A B$ with $P = (P, \pi, \mathcal{P})$ given by:
\begin{eqnarray*}
P & = & \{ (b \in B, b' \in B, \pi \in \mathcal{B}(b,b') \, : \, f(b) = f(b') \} \\
\pi(b,b',\pi) & = & \langle \beta(b),\beta(b'),\pi \rangle \\
\mathcal{P}((b_0,b'_0,\pi_0), (b_1,b'_1,\pi_1)) & = & \{ \langle n, n' \rangle \, : \, n \in \mathcal{B}(b_0,b_1), n' \in \mathcal{B}(b_0',b_1') \}
\end{eqnarray*}
Since $P \to B \times_A B$ is a propositional fibration, $f$ is a fibration of sets.
\end{proof}

\begin{coro}{nounivreprfordiscrfibr}
The class of discrete fibrations in $\mathbb{EFF}$ does not have a univalent representation.
\end{coro}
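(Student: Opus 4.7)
The plan is to argue by contradiction, and the key observation is that \refprop{inEFFeverythingset} forces $U$ to be a set, so a univalent moduli space of discrete fibrations would have trivial loop spaces, whereas a sufficiently non-rigid discrete object should classify non-trivial loops.

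Suppose that $\pi : E \to U$ were a univalent representation of the class of discrete fibrations. Since $U \to 1$ is a fibration, \refprop{inEFFeverythingset} says that $U$ is a set, so for any path object $PU$ the fibration $(s,t) : PU \to U \times U$ is a fibration of propositions. Now consider the discrete object $\mathbf{2}$ with underlying set $\{0,1\}$, $\iota(i) = i$, $\mathcal{A}(i,i) = \{0\}$ and $\mathcal{A}(i,j) = \emptyset$ for $i \neq j$. The map $\mathbf{2} \to 1$ is a discrete fibration, so there is a classifying map $a : 1 \to U$ with $a^{*}\pi \simeq \mathbf{2}$. The crucial point is that $\mathbf{2}$ admits two non-homotopic autoequivalences, namely the identity and the swap $i \mapsto 1-i$: any homotopy between them would have to supply a $1$-cell in the empty set $\mathcal{A}(0,1)$. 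Transferring along the equivalence $a^{*}\pi \simeq \mathbf{2}$ produces two autoequivalences $w_0, w_1$ of $a^{*}\pi$ that are not fibrewise homotopic over $1$.

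Now apply univalence of $\pi$ with $f = g = a$ and $Z = 1$: each $w_k$ is homotopic to $\Gamma_{H_k}$ for some loop $H_k : 1 \to PU$ at $a$. By \refprop{hlevelsstableunderpbk}(1) the pullback $(a,a)^{*}PU \to 1$ is again a fibration of propositions, so $(a,a)^{*}PU$ is a propositional object, and $H_0, H_1$, viewed as sections of this pullback, are homotopic. Using \refprop{pullingback}, which guarantees that path objects in $\mathbb{EFF}(U \times U)$ pull back to path objects in $\mathbb{EFF}(1)$, this homotopy lifts to a fibrewise homotopy between $H_0$ and $H_1$ in $PU$ over $U \times U$. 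Then \refprop{basicpropoftransport}(2) shows that $\Gamma_{H_0}$ and $\Gamma_{H_1}$ are fibrewise homotopic over $1$, giving $w_0 \simeq w_1$, a contradiction. The one step that needs care is the passage from propositionalness of the fibre $(a,a)^{*}PU$ to a genuine fibrewise homotopy in $PU$ over $U \times U$ between $H_0$ and $H_1$; this is essentially the standard slice-stability of path objects, but it is easy to confuse homotopy in the fibre with fibrewise homotopy in the total space.
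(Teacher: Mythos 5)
Your proof is correct and takes essentially the same route as the paper: invoke \refprop{inEFFeverythingset} to conclude that $U$ is a set, exhibit the discrete object $2$ with its two non-homotopic self-equivalences (identity and swap), and use univalence to derive a contradiction. The paper's argument is just a compressed version of your unwinding — that a set-valued $U$ forces any two loops at the classifying point to be homotopic rel endpoints, hence to induce fibrewise homotopic equivalences — including the step you flag about transferring the homotopy of sections of $(a,a)^{*}PU$ to a fibrewise homotopy over $U \times U$.
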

\begin{proof} If a class of small fibrations has a univalent representation $\pi: E \to U$ with $U$ a set, then every small object $A$ has, up to homotopy, precisely one self-equivalence. Indeed, an object with more than one self-equivalence would give rise to a map $1 \to U$ which is homotopic to itself in more than one way, which is impossible if $U$ is a set. 

So because $2$ is a discrete object which is equivalent to itself in two different ways, the class of discrete fibrations cannot have a univalent representation in $\mathbb{EFF}$. (This is, in fact, a well-known argument: see \cite{kraussattler15}.)
\end{proof}

Finally, let us point out that, assuming the axiom of choice in the metatheory, we can prove that the discrete (propositional) fibrations satisfy resizing.

\begin{prop}{resizinginEFF}
{\bf (ZFC)} In $\mathbb{EFF}$ propositional fibrations are discrete. Therefore the the discrete (propositional) fibrations in $\mathbb{EFF}$ are a class of small fibrations satisfying resizing.
\end{prop}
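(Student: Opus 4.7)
The plan is to reduce the first assertion to part (3) of \refprop{discretealafreydinEFF}; the second assertion then follows immediately by unwinding the definition of resizing. So the real work is to show that any propositional fibration $f: (B,\beta,\mathcal{B}) \to (A,\alpha,\mathcal{A})$ admits a partial computable function which, given $b_0, b_1 \in B$ with $f(b_0) = f(b_1)$ and $n = \beta(b_0) = \beta(b_1)$, computes an element of $\mathcal{B}(b_0,b_1)$ from $n$ alone.

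The key input is the defining property of a propositional fibration: $(s,t): P_A(B) \to B \times_A B$ is a trivial fibration, so by axiom 7 of a path category it has a section $\sigma: B \times_A B \to P_A(B)$. As a morphism in $\mathbb{EFF}$, $\sigma$ carries a tracking for its action on 0-cells. By the explicit construction of pullbacks in \reftheo{EFFisapathcat}, a realizer of $(b_0,b_1) \in B \times_A B$ is a code $\langle \beta(b_0),\beta(b_1)\rangle$; and by the analogous explicit construction of the path object in the slice $\mathbb{EFF}(A)$ (obtained by the obvious relativisation of the construction in \reftheo{EFFisapathcat}), a realizer of $\sigma(b_0,b_1) = (b_0,b_1,\rho) \in P_A(B)$ computably encodes the 1-cell $\rho \in \mathcal{B}(b_0, b_1)$.

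Putting these together: when $\beta(b_0) = \beta(b_1) = n$, the partial computable function that first forms $\langle n, n\rangle$, then applies the tracking of $\sigma$, and finally projects out the 1-cell component, produces an element of $\mathcal{B}(b_0, b_1)$ from $n$. This establishes condition (3) of \refprop{discretealafreydinEFF}, so $f$ is discrete. Resizing then follows by inspection: the class of discrete propositional fibrations coincides with the class of all propositional fibrations, which is exactly what resizing asserts.

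The only real obstacle I anticipate is bookkeeping — one must make the slice path object $P_A(B)$ concrete enough to see that its realizer of $\sigma(b_0,b_1)$ really does contain the 1-cell $\rho$ in a computably extractable way. This is routine but does require a little care about how 1-cells are coded in the slice construction. The use of \textbf{(ZFC)} in the statement is inherited from \refprop{discretealafreydinEFF}, whose direction $(3) \Rightarrow (1)$ selects representatives from equivalence classes by the axiom of choice.
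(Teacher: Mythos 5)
Your proof is correct, but it takes a genuinely different route from the paper. The paper argues directly: given a propositional fibration $f: (B,\beta,\mathcal{B}) \to (A,\alpha,\mathcal{A})$, it builds the standard discrete propositional fibration $\pi: C \to A$ with $C = \{(a,n) : \exists b\, (f(b)=a,\ \beta(b)=n)\}$ and $\mathcal{C}((a,n),(a',n')) = \mathcal{A}(a,a')$, uses the axiom of choice to produce a morphism $C \to B$ over $A$, and concludes that $B$ and $C$ are equivalent over $A$ (both being propositional), so $f$ is discrete by invariance of discreteness under equivalence. You instead reduce to condition (3) of \refprop{discretealafreydinEFF} and verify it by taking a section $\sigma$ of the trivial fibration $(s,t): P_A(B) \to B \times_A B$ and reading the 1-cell off the realizer of $\sigma(b_0,b_1)$; this works, because the concrete fibrewise path object you need is exactly the one written out in \refprop{inEFFeverythingset}, whose realizer $\langle \beta(b),\beta(b'),\rho\rangle$ does expose the 1-cell computably, and the realizer of $(b_0,b_1)$ in the pullback construction of \reftheo{EFFisapathcat} is indeed $\langle n,n\rangle$, so the tracking of $\sigma$ gives the required uniform function of $n$. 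What your route buys is that the appeal to choice is entirely encapsulated in the $(3)\Rightarrow(1)$ direction of \refprop{discretealafreydinEFF}, and no auxiliary object or back-and-forth morphisms need to be constructed; what the paper's route buys is independence from the orthogonality characterisation (which is itself a {\bf (ZFC)} result proved by choosing representatives), giving a short self-contained construction that exhibits an explicit standard discrete fibration equivalent to $f$. Both arguments use choice only where the {\bf (ZFC)} label permits, and in both cases the resizing claim follows immediately from the definition, as you say.
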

\begin{proof}
Suppose $f: (B, \beta, \mathcal{B}) \to (A, \alpha, \mathcal{A})$ is propositional, and let \[ C = \{ (a, n) : (\exists b) \, f(b) = a, \beta(b) = n \}, \] with $\gamma(a, n) = n$ and $\mathcal{C}((a,n),(a', n')) = \mathcal{A}(a, a')$. Clearly, the projection $\pi: C \to A$ is a discrete propositional fibration. There is an obvious morphism $B \to C$ over $A$, and, using AC, there is also a morphism $C \to B$ over $A$. Since $f$ and $\pi$ are propositional, the maps going back and forth between $B$ and $C$ are each other's homotopy inverses, so $f$ is discrete as well.
\end{proof}

\section{A remark on Church's Thesis in $\mathbb{EFF}$}

A classical result in the metamathematics of constructivism says that extensional Heyting arithmetic in finite types with the axiom of choice for finite types proves the negation of Church's Thesis (saying that any function $f: \mathbb{N} \to \mathbb{N}$ is computable) (see, for instance, \cite[Theorem 9.6.8.(i)]{troelstravandalen88}). This means in particular that if we work in Martin-L\"of Type Theory with function extensionality, then this theory will refute Church's Thesis in the following form:
\[ \Pi f: \mathbb{N} \to \mathbb{N}. \Sigma n \in \mathbb{N}. \mbox{$n$ is a G\"odel number for a Turing machine computing $f$}. \]
However, if we use propositional truncation $\Vert \ldots \Vert$ to turn the $\Sigma$ into $\exists$ (to use the terminology from Homotopy Type Theory) and we read Church's Thesis as:
\[ \Pi f: \mathbb{N} \to \mathbb{N}. \Vert \Sigma n \in \mathbb{N}. \mbox{$n$ is a G\"odel number for a Turing machine computing $f$} \Vert, \]
then this statement holds in $\mathbb{EFF}$. One way of seeing this starts by observing that if $P$ is the functor from the proof of \refprop{hoisEff} and $f: B \to A$ is a fibration in $\mathbb{EFF}$, then $Pf$ is a monomorphism precisely when $f$ is propositional (this should be clear from the description of the functor $P$, the discussion on page 67 of \cite{vanoosten08} and \refprop{existofproptrinEFF}). Therefore for any object $A$ in $\mathbb{EFF}$, the functor $P$ sets up an order isomorphism between the subobject lattice on $PA$ in $\Eff$ and the poset reflection of the full subcategory of the slice category $\mathbb{EFF}/A$ on the propositional fibrations. This means that what holds in the logic of $\mathbb{EFF}$, when we interpret statements using propositions in the sense of Homotopy Type Theory (as explained in Sections 3.6 and 3.7 in \cite{univalent13}), is precisely what holds in the internal logic of $\Eff$. Since Church's Thesis holds there, the same is true for $\mathbb{EFF}$. (See also \cite{escardoxu15}, especially Section 1.3.)

\section{The path category $\mathbb{EFF}_1$}

A univalent universe containing sets like $2, \mathbb{N}$ and $\mathbb{N} \to \mathbb{N}$ must have an hlevel at least one: that is, it must at least be a groupoid. This means that if we want a category like $\mathbb{EFF}$ in which there is a univalent representation classifying discrete sets, we have to consider objects with an additional level of complexity. Indeed, for the next step we need to consider tricategories which are locally codiscrete in the sense that any two parallel 2-cells are connected by a unique 3-cell. This sounds quite complicated, but it is not too bad: we have seen that locally codiscrete bigroupoids are in a sense easier than groupoids, and in the same way locally codiscrete tricategories are simpler objects than bicategories.

We define the following category $\mathbb{EFF}_1$:
\begin{itemize}
\item Objects consist of:
\begin{enumerate}
\item A set $A$.
\item A function $\alpha: A \to \mathbb{N}$.
\item For each pair of elements $a, a' \in A$ a set of natural numbers (1-cells) $\mathcal{A}(a,a')$.  Instead of $\pi \in \mathcal{A}(a,a')$ we will often write $\pi: a \to a'$.
\item For each pair of elements $\pi, \pi' \in \mathcal{A}(a,a')$ a set of natural numbers (2-cells) $\mathcal{A}(\pi,\pi')$. Instead of $n \in \mathcal{A}(\pi,\pi')$ we will often write $n: \pi \Rightarrow \pi'$. 
\item An effective function picking for each $a \in A$ an element $1_a \in \mathcal{A}(a,a)$.
\item An effective function picking for each $a, a' \in A$ and $\pi \in \mathcal{A}(a,a')$ an element $\pi^{-1} \in \mathcal{A}(a',a)$.
\item An effective function picking for each $a,a',a'' \in A$ and $\pi \in \mathcal{A}(a,a')$ and $\rho \in \mathcal{A}(a',a'')$ an element $\rho \circ \pi \in \mathcal{A}(a,a'')$.
\item Together with 2-cells $1_b \circ \pi  \Rightarrow \pi$ and  $\pi \circ 1_a \Rightarrow \pi$ (which can be computed from $a,b,\pi$).
\item Together with 2-cells  $\pi^{-1} \circ \pi \Rightarrow 1$ and $\pi \circ \pi^{-1} \Rightarrow 1$ (which can be computed from the same data).
\item Together with 2-cells $(\sigma \circ \rho) \circ \pi \Rightarrow \sigma \circ (\rho \circ \pi)$ (which can be computed from $\pi,\rho,\sigma$ and realizers for the domains and codomains).
\item For each $\pi \in \mathcal{A}(a,a')$ a 2-cell $1_{\pi} \in \mathcal{A}(\pi,\pi)$ (which can be computed from $\pi$ and realizers for $a$ and $a'$).
\item For each pair of 2-cells $n \in \mathcal{A}(\pi,\rho)$ and $m \in \mathcal{A}(\rho,\sigma)$ a 2-cell $m \circ_1 n \in \mathcal{A}(\pi,\sigma)$ (which can be computed from $n, m, \pi, \rho, \sigma$ and their domains and codomains).
\item For each 2-cell $n \in \mathcal{A}(\pi,\rho)$ a 2-cell $n^{-1} \in \mathcal{A}(\rho,\pi)$ (which can again be computed from the relevant data).
\item For each pair of 2-cells $n \in \mathcal{A}(\pi,\rho)$ and $m \in \mathcal{A}(\pi',\rho')$ a 2-cell $m \circ_0 n \in \mathcal{A}(\pi' \circ \pi,\rho' \circ \rho)$ (which can also be computed from the relevant data).
\end{enumerate}
\item Morphisms $f: B \to A$ consist of: 
\begin{enumerate}
\item A function $f: B \to A$.
\item For each $b_1,b_2 \in B$ a function $f_{b_1,b_2}: \mathcal{B}(b_1,b_2) \to \mathcal{A}(fb_1,fb_2)$, as well as
\item For each $b_1,b_2 \in B$ and $\pi,\rho \in \mathcal{B}(b_1,b_2)$ a function $f_{\pi,\rho}:\mathcal{B}(\pi,\rho) \to \mathcal{A}(f\pi,f\rho)$,
\end{enumerate}
such that:
\begin{itemize}
\item A realizer for $f(b)$ can be computed from a realizer for $b$.
\item $f_{b_1,b_2}(\pi)$ can be computed from realizers for $b_1,b_2$ and $\pi$.
\item $f_{\pi,\rho}(n)$ can be computed from $n, \pi$ and $\rho$ and realizers for the domain and codomain of both $\pi$ and $\rho$.
\item An element in $\mathcal{A}(f(1_b),1_{fb})$ can be computed from a realizer for $b$.
\item An element in $\mathcal{A}(f(\rho \circ \pi),f\rho \circ f\pi)$ can be computed from realizers for $\rho,\pi$ and domains and codomains for $\rho$ and $\pi$.
\end{itemize}
\end{itemize}

\begin{rema}{lasttwoconditons}
It may look odd to not make the last two items (the ``proof terms for functoriality'') part of the identity of a morphism. However, since we lack unit and associativity laws for 2-cells in the definition of an object in $\mathbb{EFF}_1$ adding such would no longer result in a category.
\end{rema}

I will continue to denote objects by $(A, \alpha, \mathcal{A})$.

\begin{defi}{fibrationinEFF1}
A morphism $f: (B, \beta, \mathcal{B}) \to (A, \alpha, \mathcal{A})$ in $\mathbb{EFF}_1$ is called a \emph{fibration} if
\begin{itemize}
\item for every $b \in B$, $a \in A$ and $\pi \in \mathcal{A}(fb, a)$ one can effectively find elements $b' \in B$ with $f(b') = a$ and $\pi' \in \mathcal{B}(b, b')$ with $f(\pi') = \pi$.
\item for any $\pi, \pi' \in \mathcal{A}(fb,fb')$, $n \in \mathcal{A}(\pi,\pi')$ and $\rho \in \mathcal{B}(b,b')$ such that $f(\rho) = \pi$, one can compute $\rho' \in \mathcal{B}(b,b')$ and $m \in \mathcal{B}(\rho,\rho')$ with $f(\rho') = \pi'$ and $f(m) = n$.
\item given a 2-cell $m \in \mathcal{B}(\pi,\pi')$ and a 2-cell $n \in \mathcal{A}(f\pi,f\pi')$ one can compute a 2-cell $m' \in \mathcal{B}(\pi,\pi')$ with $f(m') = n$.   
\end{itemize}
\end{defi}

\begin{lemm}{easyfibr}
Isomorphisms are fibrations, fibrations are closed under composition and the unique map to the terminal object is always a fibration.
\end{lemm}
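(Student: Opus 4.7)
The approach is straightforward: each of the three claims reduces to chasing the lifting data through the definitions, with the three-level structure (0-cells, 1-cells, 2-cells) requiring that we verify each of the three lifting conditions in \refdefi{fibrationinEFF1} separately. The key observation is that condition (i) lifts a 1-cell over a given 0-cell, condition (ii) simultaneously lifts a 2-cell and adjusts the 1-cell it lies over, and condition (iii) lifts a 2-cell over a fixed parallel pair of 1-cells; so any manipulation of fibrations will typically have a three-layer structure mirroring this.

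For isomorphisms, I would use the fact that if $f: B \to A$ has a strict two-sided inverse $g$ in $\mathbb{EFF}_1$, then the underlying functions on 0-, 1- and 2-cells all satisfy $gf = \mathrm{id}$ and $fg = \mathrm{id}$ on the nose. Thus, given a datum to be lifted, one applies $g$ to it; e.g.\ for (i) take $b' := g(a)$ and $\pi' := g(\pi) \in \mathcal{B}(gfb, ga) = \mathcal{B}(b,b')$, and note $f(\pi') = fg(\pi) = \pi$. Conditions (ii) and (iii) are handled identically, using that $gf(\rho) = \rho$ strictly for a 1-cell $\rho$ with $f(\rho) = \pi$, so that $g(\pi) = \rho$ and $g(n)$ already has the required source and target.

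For the unique map $!: X \to 1$, the terminal object $1$ has a single 0-cell, a single 1-cell, and a single 2-cell, so in each lifting condition the target data is literally the unique cell at the appropriate level. One can then always take the lift to be (the relevant identity on) the given source datum in $X$: for (i) take $b' := b$ and $\pi' := 1_b$; for (ii) take $\rho' := \rho$ and $m := 1_\rho$; for (iii) take $m' := m$. The required equations in $1$ hold trivially, since any two cells at the same level of $1$ are equal.

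For closure under composition, given fibrations $f: C \to B$ and $g: B \to A$, I would lift the given datum first through $g$ and then through $f$. For condition (i), given $c \in C$, $a \in A$ and $\pi: gfc \to a$, apply (i) for $g$ to obtain $b \in B$ and $\pi_1: fc \to b$ with $g(b) = a$, $g(\pi_1) = \pi$; then apply (i) for $f$ to obtain $c' \in C$ and $\pi_2: c \to c'$ with $f(c') = b$, $f(\pi_2) = \pi_1$; the pair $(c',\pi_2)$ is the desired lift. Condition (ii) is analogous: apply (ii) for $g$ starting from $f(\rho)$ and $n$ to get $\sigma'$ and $m_1$, then apply (ii) for $f$ to $\rho$ and $m_1$ to produce $\rho'$ and $m_2$. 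Condition (iii) is the simplest, lifting $n$ first through $g$ against $f(m)$ and then through $f$ against $m$. The only subtle point — really the only thing to check at all — is that in the composed liftings the domain and codomain data match up so that the input of each lifting step is well-typed, which is immediate from the output of the previous step. All partial computable functions witnessing effectivity are obtained by composition of the corresponding functions for $f$ and $g$, so the effectivity clauses are automatic.
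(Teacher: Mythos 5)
Your proposal is correct: the paper states this lemma without proof, regarding it as a routine verification, and your argument is exactly the intended one — check each of the three lifting conditions of \refdefi{fibrationinEFF1} directly, using the strict inverse for isomorphisms, the triviality of the terminal object, and two-stage lifting (first through $g$, then through $f$) for composites, with effectivity following by composing the tracking functions.
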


\begin{lemm}{onpbks}
Fibrations are stable under pullback.
\end{lemm}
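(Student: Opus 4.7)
The plan is to mimic, with one extra level of structure, the pullback construction used for $\mathbb{EFF}$ in the proof of \reftheo{EFFisapathcat}. Given a fibration $f: (B, \beta, \mathcal{B}) \to (A, \alpha, \mathcal{A})$ and an arbitrary map $g: (C, \gamma, \mathcal{C}) \to (A, \alpha, \mathcal{A})$, I would set
\[
D = \{(c, b) \in C \times B : g(c) = f(b)\}, \qquad \delta(c,b) = \langle \gamma(c), \beta(b)\rangle,
\]
and define the 1- and 2-cell sets as ``matching pairs''
\[
\mathcal{D}((c,b),(c',b')) = \{(\pi,\rho) : \pi \in \mathcal{C}(c,c'),\ \rho \in \mathcal{B}(b,b'),\ g(\pi)=f(\rho)\}
\]
and
\[
\mathcal{D}((\pi,\rho),(\pi',\rho')) = \{(n,m) : n \in \mathcal{C}(\pi,\pi'),\ m \in \mathcal{B}(\rho,\rho'),\ g(n)=f(m)\},
\]
with the projection $\pi_C \colon (c,b) \mapsto c$ as the candidate pullback of $f$ along $g$. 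The universal property as a strict pullback in the 1-categorical sense is then straightforward to check.

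The substantive step is verifying that $(D,\delta,\mathcal{D})$ really is an object of $\mathbb{EFF}_1$, and this is where the fibration hypothesis on $f$ is used, exactly as in the proof of \reftheo{EFFisapathcat}. For instance, the naive choice $(1_c, 1_b)$ for an identity 1-cell on $(c,b)$ fails, because one only has a 2-cell $f(1_b) \Rightarrow g(1_c)$ in $A$ (built from the coherence 2-cells $f(1_b) \Rightarrow 1_{fb}$ and $g(1_c) \Rightarrow 1_{gc}$ that come with $f$ and $g$, using $fb = gc$), not strict equality. Applying the second condition of \refdefi{fibrationinEFF1} to $\rho = 1_b$ and this 2-cell yields a $\rho' \in \mathcal{B}(b,b)$ with $f(\rho') = g(1_c)$, so $(1_c, \rho')$ is a legitimate identity. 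Exactly the same strategy, using condition (2) to ``rectify'' 1-cells in $B$ along 2-cells supplied by the functoriality coherences of $f$ and $g$, produces composition, inverses, and all the higher coherence 2-cells (items (8)--(10) in the definition of an object) at the 1-cell level. The analogous rectifications at the 2-cell level---vertical and horizontal composition, identities, inverses, etc.---are handled by condition (3) of \refdefi{fibrationinEFF1}. All of these operations are partial computable in their inputs because the fibration conditions give computable functions witnessing the lifts.

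Finally, that $\pi_C$ is a fibration reduces conditionwise to the fibration structure of $f$: a 1-cell lift $\pi \in \mathcal{C}(c,c')$ with domain $(c,b)$ is obtained by feeding $g(\pi) \in \mathcal{A}(fb,gc')$ into condition (1) of $f$ to produce $b'$ and $\rho$ with $f(b') = g(c')$ and $f(\rho) = g(\pi)$, so $(\pi,\rho) \in \mathcal{D}$ is the required lift; a 2-cell lift of $n \in \mathcal{C}(\pi,\pi'')$ with a given $(\pi,\rho)$ mapping to $\pi$ is obtained by applying condition (2) of $f$ to $g(n)$ and $\rho$; and the third condition for $\pi_C$ follows by applying condition (3) of $f$ to $g(n)$. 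I expect the main obstacle to be purely bookkeeping: there is an intimidating amount of structure to assemble on $(D,\delta,\mathcal{D})$, and one must be careful that every gadget is produced uniformly by a partial computable function in the indicated inputs. The work is tedious but, in the spirit of the verifications already admitted as ``straightforward but tedious'' in Section 4, not conceptually hard.
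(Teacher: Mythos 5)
Your proposal is correct and follows essentially the same route as the paper: the same matching-pairs construction of $(D,\delta,\mathcal{D})$, with the real work being to equip it with the structure of an object of $\mathbb{EFF}_1$ by keeping the $C$-components as in $C$ and using the fibration conditions on $f$ (lifting along the coherence 2-cells) to rectify the $B$-components, and with each fibration condition for the projection reduced to the corresponding condition on $f$. Your write-up is merely more explicit than the paper's about which clause of the fibration definition handles which piece of structure.
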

\begin{proof}
Let $f: B \to A$ be a fibration and $g: C \to A$ be arbitrary. The pullback is computed as follows:
\begin{itemize}
\item The underlying set is $D := C \times_A B$.
\item A realizer of $(c, b)$ is the pair consisting of $\gamma(c)$ and $\beta(b)$.
\item 1-cells $(c,b) \to (c',b')$ are pairs of 1-cells $\pi: c \to c'$ and $\rho: b \to b'$ with $g(\pi) = f(\rho)$.
\item 2-cells $(\pi,\rho) \Rightarrow (\pi',\rho')$ are pairs of 2-cells $n: \pi \Rightarrow \pi'$ and $m: \rho \Rightarrow \rho'$ with $g(n) = f(m)$.
\end{itemize}
The main difficulty here is not to show the universal property, but to prove that it is an object of $\mathbb{EFF}_1$. As in \reftheo{EFFisapathcat}, one defines identities or composition on the $C$-component of $D$ just as in $C$, whereas for the $B$-component of $D$ one uses the fact that $f$ is a fibration to choose something in $B$ which is both homotopic to the identity or composition of $B$ and which has the same $A$-image as the object chosen in the $C$-component.
\end{proof}

\begin{defi}{homotopyinEFF1}
A \emph{homotopy} between two morphisms $f, g: (B, \beta, \mathcal{B}) \to (A, \alpha, \mathcal{A})$ in $\mathbb{EFF}_1$ consists of:
\begin{enumerate}
\item a function $H_1$ picking for every $b \in B$ an element in $H_1(b) \in \mathcal{A}(fb, gb)$.
\item a function $H_2$ picking for each $\pi \in \mathcal{B}(b,b')$ a filler $H_2(\pi)$ for the square
\begin{center}
\begin{tikzcd}
 fb \ar[r, "f(\pi)"] \ar[d, "H_1(b)"'] & fb' \ar[d, "H_1(b')"] \\
gb \ar[r, "g(\pi)"'] & gb' 
\end{tikzcd}
\end{center}
\end{enumerate}
such that:
\begin{itemize}
\item[--] $H_1(b)$ can be computed from $b$.
\item[--] $H_2(\pi)$ can be computed from $b,b'$ and $\pi$.
\end{itemize}
If $H$ is such a homotopy, I will write $H: f \simeq g$. If such a homotopy $H$ exists, I will call $f$ and $g$ homotopic and write $f \simeq g$. A \emph{coded homotopy} is a pair consisting of G\"odel numbers for the trackings of $H_1$ and $H_2$. If a map has a homotopy inverse, it will be called a \emph{(homotopy) equivalence}.
\end{defi}

Note that the homotopy relation does not take into account the action of morphisms on 2-cells.

\begin{lemm}{2outof6}
The homotopy relation is a congruence and hence isomorphisms are equivalences and equivalences satisfy 6-for-2.
\end{lemm}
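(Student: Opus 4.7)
The plan is to verify in turn that $\simeq$ is reflexive, symmetric, transitive and compatible with pre- and post-composition; the two corollaries then follow formally. All constructions amount to pasting together the canonical 1- and 2-cells supplied by clauses (5)--(14) in the definition of an object of $\mathbb{EFF}_1$ together with the functoriality 2-cells of morphisms, and the effectivity of every witness is automatic from the corresponding effectivity clause.

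For reflexivity, take $H_1(b) := 1_{fb}$ and let $H_2(\pi) : 1_{fb'} \circ f\pi \Rightarrow f\pi \circ 1_{fb}$ be built from the unitors of clause (8). For symmetry, given $H : f \simeq g$ set $K_1(b) := H_1(b)^{-1}$; the required filler $K_2(\pi) : H_1(b')^{-1} \circ g\pi \Rightarrow f\pi \circ H_1(b)^{-1}$ is obtained by vertically inverting $H_2(\pi)$ (clause (13)), whiskering on both sides by $H_1(b')^{-1}$ and $H_1(b)^{-1}$ via horizontal composition (clause (14)) and simplifying using associators and the invertibility 2-cells $\pi^{-1} \circ \pi \Rightarrow 1$ (clauses (9) and (10)). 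For transitivity of $H : f \simeq g$ and $K : g \simeq h$, set $(K \ast H)_1(b) := K_1(b) \circ H_1(b)$ and obtain $(K \ast H)_2(\pi)$ by pasting $H_2(\pi)$ and $K_2(\pi)$ horizontally, correcting by associators. For congruence under composition I treat pre- and post-composition separately: given $H : f \simeq f' : B \to A$ and $k : A \to C$, set $(kH)_1(b) := k(H_1(b))$ and obtain $(kH)_2(\pi)$ by applying $k$ to $H_2(\pi)$ and correcting with the functoriality 2-cells of $k$ (the last two items in the definition of a morphism); given $H : g \simeq g' : A \to C$ and $f : B \to A$, set $(Hf)_1(b) := H_1(fb)$ and $(Hf)_2(\pi) := H_2(f\pi)$ with no further correction.

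The main obstacle is purely bookkeeping: one has to check that each paste of structural 2-cells can be produced by an algorithm taking only the inputs that the definition of a homotopy permits (essentially, realizers for objects and for the 1-cells between them, but not for 2-cells). Since every structural operation in the object structure of $\mathbb{EFF}_1$ comes equipped with its own effectivity clause, this is routine. The two corollaries then follow at once. If $f$ is an isomorphism with strict inverse $g$, then $fg = 1$ and $gf = 1$ give $fg \simeq 1$ and $gf \simeq 1$ by reflexivity, so $g$ is a homotopy inverse and $f$ is an equivalence. For 6-for-2, the congruence property yields a well-defined quotient category $\mathbb{EFF}_1/{\simeq}$ in which the equivalences of $\mathbb{EFF}_1$ become precisely the isomorphisms, and 6-for-2 for isomorphisms in any category immediately transfers back to equivalences in $\mathbb{EFF}_1$.
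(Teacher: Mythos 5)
Your verification is correct and is exactly the routine check the paper leaves implicit here (just as it does for the analogous step in the proof of \reftheo{EFFisapathcat} for $\mathbb{EFF}$): establish that $\simeq$ is an equivalence relation compatible with pre- and post-composition by pasting the effectively given structural 1- and 2-cells, and then deduce the two consequences formally, with 6-for-2 transferred from isomorphisms in the homotopy quotient. Nothing is missing.
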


\begin{lemm}{pathobjects} Every diagonal $A \to A \times A$ factors as an equivalence followed by a fibration.
\end{lemm}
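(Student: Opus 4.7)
The plan is to follow the template of the path object construction in the proof of \reftheo{EFFisapathcat}, lifted one dimension higher. Given $A = (A, \alpha, \mathcal{A})$, I would define $PA = (P, p, \mathcal{P})$ as follows: the underlying set $P$ consists of triples $(a, a', \rho)$ with $\rho \in \mathcal{A}(a, a')$, realized by $p(a, a', \rho) = \langle \alpha(a), \alpha(a'), \rho\rangle$; a 1-cell $(a_0, a_0', \rho_0) \to (a_1, a_1', \rho_1)$ is a triple $\langle \sigma, \sigma', \phi\rangle$ with $\sigma: a_0 \to a_1$, $\sigma': a_0' \to a_1'$ and a 2-cell $\phi : \sigma' \circ \rho_0 \Rightarrow \rho_1 \circ \sigma$ in $A$; and a 2-cell between two such 1-cells is a pair $\langle n, n'\rangle$ of 2-cells in $A$. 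The identity, inverse, composition and higher coherence data on $P$ are inherited componentwise from $A$, using the fillers $\phi$ together with the associators and unitors of $A$ to patch things together.

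I would then take $r : A \to PA$ to send $a \mapsto (a, a, 1_a)$ and $(s, t) : PA \to A \times A$ to be endpoint projection, and verify the two halves of the factorisation separately.

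To see that $(s, t)$ is a fibration, I would check the three lifting conditions of \refdefi{fibrationinEFF1}. For the first, given $(a_0, a_0', \rho_0) \in P$ and a pair $(\sigma : a_0 \to b_0,\, \sigma' : a_0' \to b_0')$ in $A \times A$, I would take $\rho_1 := \sigma' \circ \rho_0 \circ \sigma^{-1}$ and use $\langle \sigma, \sigma', \phi\rangle$ as the lift, where $\phi : \sigma' \circ \rho_0 \Rightarrow \rho_1 \circ \sigma$ is the canonical 2-cell built from the associator of $A$ and the inverse 2-cell $\sigma^{-1} \circ \sigma \Rightarrow 1_{a_0}$. For the second, given a 1-cell $\langle \sigma, \sigma', \phi\rangle$ in $PA$ and a pair of 2-cells $(n_1 : \sigma \Rightarrow \tau,\, n_2 : \sigma' \Rightarrow \tau')$ in $A \times A$, I would take $\langle \tau, \tau', \psi\rangle$ with $\psi$ the pasting $(\rho_1 \circ_0 n_1) \circ_1 \phi \circ_1 (n_2^{-1} \circ_0 \rho_0)$, lifted by $\langle n_1, n_2\rangle$ itself. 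The third condition is immediate because 2-cells of $PA$ are already pairs of 2-cells in $A$.

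For $r$ to be an equivalence I would use $s : PA \to A$ as a homotopy inverse: $s \circ r = 1_A$ strictly, while a homotopy $r \circ s \simeq 1_{PA}$ sends $(a, a', \rho)$ to the 1-cell $\langle 1_a, \rho, 1\rangle : (a, a, 1_a) \to (a, a', \rho)$, where the 2-cell component is just the identity on $\rho \circ 1_a$, and extends coherently to 1-cells using the unitor 2-cells of $A$. The main obstacle is not any individual step but the bookkeeping: verifying that $P$ genuinely satisfies every axiom of an object of $\mathbb{EFF}_1$, including the coherence 2-cells between associators and unitors, and that every datum above is effectively computable from realisers. As is customary in this section of the paper, these checks are routine but tedious, and I would leave them to the reader.
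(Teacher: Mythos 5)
Your construction of $PA$ — triples $(a,a',\rho)$, 1-cells given by a pair of 1-cells in $A$ together with a 2-cell filler $\sigma'\circ\rho_0 \Rightarrow \rho_1\circ\sigma$, and 2-cells given by pairs of 2-cells in $A$ — is exactly the path object the paper uses, with $r$ and $(s,t)$ defined the same way. Your sketches of the lifting conditions for $(s,t)$ and of the homotopy $rs\simeq 1_{PA}$ are correct (indeed you supply more of the verification than the paper, which simply asserts the factorisation is an equivalence followed by a fibration), so the proposal is correct and essentially identical in approach.
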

\begin{proof}
The \emph{path object} on $A = (A, \alpha, \mathcal{A})$ is the object $PA = (P, \pi, \mathcal{P})$, where:
\begin{enumerate}
\item $P$ consists of triples $(a, b, \rho)$ with $a, b \in A$ and $\rho \in \mathcal{A}(a,b)$,
\item $\pi(a, b, \rho)$ is a triple consisting of the realizers of $a$ and $b$, as well as the natural number $\rho$, and
\item $\mathcal{P}((a,b, \rho), (a',b',\rho'))$ consists of pairs of maps $\mu: a \to a'$ and $\nu: b \to b'$, together with a natural number $n: \nu \rho \Rightarrow \rho'\mu$. 
\item 2-cells between parallel $(\mu,\nu,n)$ and $(\mu',\nu',n')$ consist of pairs of 2-cells $\mu \Rightarrow \mu'$ and $\nu \Rightarrow \nu'$.
\end{enumerate}
In addition, there is an obvious factorisation of the diagonal 
\begin{center}
\begin{tikzcd}
A \ar[r, "r"] & PA \ar[r,"{(s,t)}"] & A \times A,
\end{tikzcd}
\end{center}
in which the first map is an equivalence and the second a fibration.
\end{proof}

If $H, K: f \simeq g$ are homotopies between $f, g: B \to A$, let us write $M: H \sim K$ for a function $M$ which computes for each $b \in B$ a 2-cell in $A$ between $H_b: fb \to gb$ and $K_b: fb \to gb$.

The following proposition is a variation on a standard result from homotopy theory and higher category theory (see \cite[Theorem 4.2.3]{univalent13}).

\begin{prop}{adjequiv}
Suppose $f: A \to B$ and $g: B \to A$ are morphisms with homotopies $\eta: 1 \simeq gf$ and $\varepsilon: fg \simeq 1$. Then we can modify $\eta$ to a homotopy $\eta': 1 \simeq gf$ for which there are
\[ M: \varepsilon_f \circ f\eta' \sim 1_f \mbox{ and } N: g\varepsilon \circ \eta'_g \sim 1_g. \]
(A similar statement holds for $\varepsilon$.)
\end{prop}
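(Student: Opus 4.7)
The plan is to adapt the classical adjoint-equivalence construction from bicategory theory (essentially \cite[Theorem 4.2.3]{univalent13}) to the setting of $\mathbb{EFF}_1$. The key simplification afforded by our setting is that objects of $\mathbb{EFF}_1$ are meant to behave like locally codiscrete tricategories, so any two parallel 2-cells are connected by a (unique, tacit) 3-cell. Consequently, I only need to exhibit the 2-cells $M$ and $N$ and check that their endpoints match; no higher coherence equations must be verified.

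Concretely, I would define $\eta'_a := \eta_a \cdot \kappa_a$ for a loop $\kappa_a \in \mathcal{A}(gfa, gfa)$ built from $\eta$, $\varepsilon$, and their inverses in such a way that, after whiskering with $f$, the image of $\kappa_a$ cancels against $\varepsilon_{fa}$. The standard choice is to take $\kappa_a$ to be the conjugate of the identity by $g\varepsilon_{fa}^{-1}$ and $gf\eta_a$; more explicitly, one can paste the naturality 2-cell of $\eta$ at $\eta_a$, the 2-cells $gf\eta_a \cdot (gf\eta_a)^{-1} \Rightarrow 1$, and $g\varepsilon_{fa}$ to obtain a 1-cell $gfa \to gfa$ of the required shape. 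The $H_2$-component of $\eta'$ (its naturality on 1-cells of $B$) is obtained by whiskering and pasting the $H_2$-components of $\eta$ and $\varepsilon$, and all trackings are effective since those of $\eta$ and $\varepsilon$ are.

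Given $\eta'$, I would construct the 2-cell $M_a \colon \varepsilon_{fa} \cdot f\eta'_a \Rightarrow 1_{fa}$ by pasting together (i) the image under $f$ of the defining decomposition of $\eta'_a$, (ii) the naturality 2-cell of $\varepsilon$ applied to the 1-cell $f\eta_a \colon fa \to fgfa$, and (iii) the invertibility 2-cell $\varepsilon_{fa} \cdot \varepsilon_{fa}^{-1} \Rightarrow 1_{fa}$ together with the unit and associativity 2-cells built into the definition of $\mathbb{EFF}_1$. The pasting collapses precisely because the conjugation built into $\kappa_a$ produces, after passing through $f$, a segment of the form $\varepsilon_{fa}^{-1} \cdot \varepsilon_{fa}$. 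The second witness $N_b$ is then obtained by a symmetric pasting on the $g$-side; alternatively, one can derive it from $M_{gb}$ by whiskering with $g$ and pasting with the naturality 2-cell of $\varepsilon$ at $\varepsilon_b$, then using $g\varepsilon_b \cdot gf\varepsilon_b^{-1} \Rightarrow g\varepsilon_b$-style manipulations.

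The main obstacle is the bookkeeping: choosing the exact definition of $\kappa_a$ so that both $M$ and $N$ can be witnessed simultaneously, and lining up all endpoints of the pasted 2-cells. In the classical bicategorical setting this is delicate because one usually also has to verify higher coherence equations (the so-called swallowtail), which forces one to perform an extra modification and typically costs one of the triangle identities. Here that difficulty evaporates, since the tacit 3-cell layer of our objects is codiscrete: only the \emph{existence} of 2-cells with the correct source and target is needed, and this existence is guaranteed directly by the operations and 2-cells postulated in the definition of an object of $\mathbb{EFF}_1$. Effectivity at every step follows because each ingredient (naturality squares, invertibility witnesses, unit laws) comes with an effective tracking from the data of $\eta$, $\varepsilon$, $f$, and $g$.
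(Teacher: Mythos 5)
Your overall strategy is the paper's: replace $\eta$ by the standard corrected unit (in the paper, $\eta'_a := \eta^{-1}_{gfa} \circ g(\varepsilon_{fa}^{-1}) \circ \eta_a$, which is your $\eta_a$ followed by a loop built from $g\varepsilon_{fa}^{-1}$ and $\eta_{gf}/gf\eta$), and then witness the two triangle 2-cells by pasting naturality fillers of the homotopies, with the codiscrete 3-level ensuring that only source/target matching is needed. Two phrasings in your sketch should be repaired, though. First, $\kappa_a$ is \emph{not} a ``conjugate of the identity'': a loop of that form is connected to $1_{gfa}$ by a 2-cell, which would make $\eta'$ 2-isomorphic to $\eta$ and the modification vacuous (the unmodified $\eta$ need not satisfy any triangle identity). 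What you want is simply $\kappa_a = (gf\eta_a)^{-1}\circ g(\varepsilon_{fa}^{-1})$ (or with $\eta_{gfa}^{-1}$ in place of $(gf\eta_a)^{-1}$; the two agree up to a 2-cell). Also, pasting 2-cells does not ``obtain a 1-cell''; the 1-cell is just the composite above, and the pastings only enter when building $M$ and $N$.

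Second, and this is the substantive gap: the ingredients you list for $M$ do not suffice as stated. After applying $f$, the loop contributes the segment $fg(\varepsilon_{fa}^{-1})$, a 1-cell $fgfa \to fgfgfa$, \emph{not} $\varepsilon_{fa}^{-1}$, so it cannot be cancelled directly against $\varepsilon_{fa}$ using only the naturality square of $\varepsilon$ at $f\eta_a$ and the invertibility 2-cell. You need one further 2-cell, $fg(\varepsilon_{fa}) \Rightarrow \varepsilon_{fgfa}$ (and, on the $\eta$-side, $fgf\eta_a \Rightarrow f\eta_{gfa}$ if you use the paper's formula). This is exactly the preliminary observation the paper isolates: for any homotopy $H : 1 \simeq k$ one has $H_k \sim kH$, proved from the naturality square of $H$ at $H_a$ plus invertibility of $H_a$; equivalently you can invoke the naturality square of $\varepsilon$ at the 1-cell $\varepsilon_{fa}^{-1}$. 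With that 2-cell inserted, your pasting for $M$ goes through, and $N$ is obtained by the symmetric pasting (naturality of $\eta$ at $g\varepsilon_b$ plus the same observation whiskered by $g$), as in the paper. Your alternative suggestion of deriving $N$ from $M_{gb}$ by whiskering is not a safe shortcut -- that is precisely the delicate swallowtail-style step in the classical setting -- so stick with the direct symmetric pasting.
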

\begin{proof}
Let us first observe that if $k: A \to A$ is an endomorphism and $H: 1 \simeq k$ is a homotopy, then there are two homotopies $k \simeq kk$, to wit $H_k$ and $kH$. There is however a homotopy $M: H_k \sim kH$ between those. The reason is that
\diag{ a \ar[r]^{H_a} \ar[d]_{H_a} & ka \ar[d]^{H_{ka}} \\
ka \ar[r]_{k(H_a)} & kka }
is a naturality square and $H_a$ is an isomorphism.

So let us define
\[ \eta'_a := \eta^{-1}_{gfa} \circ g(\varepsilon_{fa}^{-1}) \circ \eta_a. \]
Since $\varepsilon_f: fgf \simeq f$ is a natural transformation and $\eta_a: a \to gfa$ is a morphism, we have that
\diag{ fgfa \ar[d]_{\varepsilon_{fa}} \ar[r]^{fgf(\eta_a)} & fgfgfa \ar[d]^{\varepsilon_{fgfa}} \\
fa \ar[r]_{f\eta_a} & fgfa }
commutes up to some 2-cell. Since we also have 2-cells $fgf\eta_a \Rightarrow f(\eta_{gfa})$ and $\varepsilon_{fgfa} \Rightarrow fg(\varepsilon_{fa})$ by the previous observation, we find $M: \varepsilon_f \circ f(\eta') \sim 1_f$.

Dually, we have the natural transformation $\eta_g: g \simeq gfg$ and the 1-cell $\varepsilon_b: fgb \to b$ leading to a 2-cell filling:
\diag{ gfgb \ar[r]^{\eta_{gfgb}} \ar[d]_{g\varepsilon_b} & gfgfgb \ar[d]^{gfg\varepsilon_b} \\
gb \ar[r]_{\eta_{gb}} & gfgb. }
Since there is a 2-cell $gfg\varepsilon_b \Rightarrow g(\varepsilon_{fgb})$, we also obtain $N_b: g\varepsilon_b \circ \eta'_{gb} \sim 1_{gb}$.
\end{proof}

\begin{prop}{ontrivfibr}
A fibration $f: B \to A$ is trivial if and only if $f$ has a section $s: A \to B$ for which there are $H: 1_B \simeq sf$ and $M: fH \sim 1_f$.
\end{prop}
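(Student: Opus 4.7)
\smallskip
\noindent\textbf{Proof plan.} The $(\Leftarrow)$ direction is immediate and does not use $M$: given a section $s$ of $f$ with $H: 1_B \simeq sf$, the map $s$ is a homotopy inverse of $f$ (since $fs = 1_A$ certainly gives $fs \simeq 1_A$), so $f$ is a homotopy equivalence, and, being a fibration, is trivial. The role of $M$ is that it records the extra coherence that needs to be produced on the nose in the converse direction.

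For the $(\Rightarrow)$ direction, my plan is to start from an arbitrary homotopy inverse of $f$ and to rectify it into a strict section by lifting along $f$. First, fix a homotopy inverse $g: A \to B$ together with homotopies $\eta: 1_B \simeq gf$ and $\varepsilon: fg \simeq 1_A$, and apply \refprop{adjequiv} to modify $\eta$ to a homotopy $\eta': 1_B \simeq gf$ admitting a coherence $M_0: \varepsilon_f \circ f\eta' \sim 1_f$. The $M_0$ produced here will become the required $M$ once the other data are in place.

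The core step is the rectification of $g$ into a morphism $s: A \to B$ with $fs = 1_A$ by lifting $\varepsilon$ along the fibration $f$. For each $a \in A$, fibration condition (1) of \refdefi{fibrationinEFF1} applied to $g(a) \in B$ and $\varepsilon_a: fg(a) \to a$ gives $s(a) \in B$ with $f(s(a)) = a$ and a 1-cell $\rho_a: g(a) \to s(a)$ with $f(\rho_a) = \varepsilon_a$. For a 1-cell $\pi: a \to a'$, I would consider the candidate $\tilde\pi := \rho_{a'} \circ g(\pi) \circ \rho_a^{-1}: s(a) \to s(a')$; its $f$-image $\varepsilon_{a'} \circ fg(\pi) \circ \varepsilon_a^{-1}$ is 2-isomorphic to $\pi$ via the naturality 2-cell of the homotopy $\varepsilon$ at $\pi$, so condition (2) promotes $\tilde\pi$ to an $s(\pi)$ with $f(s(\pi)) = \pi$ together with a 2-cell $\tilde\pi \Rightarrow s(\pi)$. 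Condition (3) is used analogously for 2-cells. The resulting $s$ is a morphism with $fs = 1_A$, and the data collected along the way assemble into a homotopy $\rho: g \simeq s$ whose $f$-image is $\varepsilon$.

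Finally, I would set $H$ to be the pointwise composite $\rho_f \circ \eta'$, where $\rho_f$ denotes the whiskering of $\rho$ by $f$, so that $H_b = \rho_{f(b)} \circ \eta'_b$. Then $fH = \varepsilon_f \circ f\eta'$, and $M_0$ directly supplies the required $M: fH \sim 1_f$. The main obstacle will be the rectification step: checking that the pointwise lifts genuinely organise into a morphism of $\mathbb{EFF}_1$ (including the ``functoriality proof terms'' of \refrema{lasttwoconditons}) and that the 2-cells collected there assemble into a homotopy in the sense of \refdefi{homotopyinEFF1} satisfying $f\rho = \varepsilon$. This amounts to a careful but routine unwinding of the coherence 2-cells packaged into objects of $\mathbb{EFF}_1$ and of conditions (2) and (3) of \refdefi{fibrationinEFF1}; no essentially new idea is needed beyond bookkeeping.
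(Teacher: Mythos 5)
Your proposal is correct and takes essentially the same route as the paper's own proof: rectify the homotopy inverse $g$ into a strict section $s$ by lifting $\varepsilon$ along the fibration (with the conjugate $\tau_{a'} \circ g(\pi) \circ \tau_a^{-1}$ promoted via the second and third fibration conditions), obtain a homotopy $g \simeq s$ lying over $\varepsilon$, and set $H$ to be its whiskering by $f$ composed with the $\eta'$ produced by \refprop{adjequiv}, whose coherence cell supplies $M$. The only tiny slip is the claim $fH = \varepsilon_f \circ f\eta'$: since morphisms in $\mathbb{EFF}_1$ preserve composition of 1-cells only up to a 2-cell, this should read $fH \sim \varepsilon_f \circ f\eta'$, which is how the paper states it and suffices for the conclusion.
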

\begin{proof}
If such $s, H, M$ exist, $f$ is an equivalence, so it suffices to prove the other direction.

So assume $f: B \to A$ is a fibration which is also an equivalence. This means there are maps $g: A \to B$ and homotopies $\varepsilon: fg \simeq 1$ and $\eta: 1 \simeq gf$. By \refprop{adjequiv} we may assume that there is some $N: \varepsilon_f \circ f(\eta) \sim 1_f$.

Let  $a \in A$ be arbitrary. Since $\varepsilon_a: fga \simeq a$ and $f$ is a fibration, we can effectively find an element $sa \in B$ and a 1-cell $\tau_a: ga \to sa$ such that $f(\tau_a) = \varepsilon_a$. In addition, if $\pi: a \to a'$, then for $\rho' := \tau_{a'} \circ g(\pi) \circ \tau_a^{-1}: sa \to sa'$, we can compute a 2-cell $n: f(\rho') \simeq \pi$, so we can determine an element $s(\pi): sa \to sa'$ such that $fs(\pi) = \pi$ and a 2-cell:
\[ \tau_{\pi}: s(\pi) \circ \tau_a \Rightarrow \tau_{a'} \circ g(\pi). \]
Finally, if $n: \pi \Rightarrow \pi'$ in $A$, then $g$ yields a 2-cell $m': s(\pi) \Rightarrow s(\pi')$ whose $f$-image is parallel to $n$. So we find an element $s(n): s(\pi) \Rightarrow s(\pi')$ with $f(s(n)) = n$.

From this it follows that $s$ is a morphism and $\tau$ is a homotopy between $g$ and $s$. By construction, $s$ is a section of $f$. Finally, if $H = \tau_f \circ \eta$, then $fH \sim f(\tau_f) \circ f(\eta) = \varepsilon_f \circ f(\eta) \sim 1_f$, as desired.
\end{proof}

\begin{coro}{trivstableunderpbk}
Trivial fibrations have sections and are stable under pullback along arbitrary maps.
\end{coro}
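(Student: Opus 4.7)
The section part is immediate: by \refprop{ontrivfibr} every trivial fibration already comes equipped with a section. So the task is pullback stability.

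My plan is to reduce this directly to the characterization of trivial fibrations given in \refprop{ontrivfibr}. Let $f: B \to A$ be a trivial fibration and $g: C \to A$ an arbitrary map, and form the pullback $g^*f: D \to C$ as in \reflemm{onpbks}; this is a fibration by that lemma. By \refprop{ontrivfibr}, we may fix a section $s: A \to B$ of $f$ together with data $H: 1_B \simeq sf$ and $M: fH \sim 1_f$. Inspecting the construction of $s$ in the proof of \refprop{ontrivfibr}, the equality $fs = 1_A$ is strict on 0-, 1- and 2-cells, so we can pull $s$ back to define $t: C \to D$ componentwise by $t(c) = (c, sg(c))$ and similarly on higher cells; this is a section of $g^*f$, again strictly.

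To apply the criterion of \refprop{ontrivfibr} to $g^*f$, I need to build a homotopy $K: 1_D \simeq t \circ g^*f$ together with $N: (g^*f) K \sim 1_{g^*f}$. At a point $(c,b) \in D$, the homotopy $H$ supplies a 1-cell $H_b: b \to sf(b) = sg(c)$ in $B$, and $M$ supplies a 2-cell $fH_b \Rightarrow 1_{g(c)}$. Applying the third (2-cell lifting) clause of \refdefi{fibrationinEFF1} to $f$, I replace $H_b$ by a 1-cell $H'_b: b \to sg(c)$ satisfying $fH'_b = 1_{g(c)}$ strictly. The pair $(1_c, H'_b)$ is then a legitimate 1-cell $(c,b) \to (c, sg(c)) = t g^*f(c,b)$ in the pullback $D$; this defines $K_1$. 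The 2-cell part $K_2$ is obtained the same way from the 2-cell part $H_2$ of the original homotopy, using 2-cell lifting once more to arrange that its $C$-component is an identity. By construction, the image of $K$ under $g^*f$ is the constant identity on $C$, so $N$ can be taken pointwise to be the identity 2-cell, giving $(g^*f)K \sim 1_{g^*f}$. \refprop{ontrivfibr} then concludes that $g^*f$ is trivial.

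The only genuine point of care is the third bullet of \refdefi{fibrationinEFF1}: we crucially need the 2-cell lifting property of $f$ to turn the given $H_b$ (which only lifts $1$ up to a 2-cell) into a strict lift $H'_b$, for otherwise the $C$-component of the homotopy would not be an identity and we would not directly obtain the required $N$. This is precisely the extra datum $M$ that \refprop{ontrivfibr} provides beyond a bare homotopy, and it is what makes the argument go through routinely.
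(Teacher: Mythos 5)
Your argument is correct and is essentially the route the paper intends: the corollary is stated as an immediate consequence of \refprop{ontrivfibr}, whose data $(s,H,M)$ you pull back along $g$, using the lifting clauses of \refdefi{fibrationinEFF1} to rectify $H_b$ to a lift of the identity so that the resulting homotopy on $D$ lies over identity cells in $C$. One small correction: replacing the 1-cell $H_b$ by $H'_b$ with $fH'_b = 1_{g(c)}$ uses the \emph{second} clause of \refdefi{fibrationinEFF1} (lifting a 1-cell along a 2-cell in the base, fed with the 2-cell supplied by $M$), not the third, which only adjusts an already existing 2-cell between a fixed parallel pair of 1-cells; the third clause is the one you need for the 2-cell component $K_2$, to make the $B$-component of the filler agree strictly over $A$ with the chosen $C$-component.
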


We have shown:

\begin{theo}{EFF1pathcategory}
With the equivalences and fibrations as defined above, the category $\mathbb{EFF}_1$ carries the structure of a path category.
\end{theo}

\begin{rema}{rosoliniagain} The category $\mathbb{EFF}_1$ is similar to one considered by Pino Rosolini in a talk at the Wessex Theory Seminar in 2010; however, his category is actually a 2-category, not a path category, and also his goals were different.
\end{rema}

\section{Function spaces and function extensionality in $\mathbb{EFF}_1$}

We now show that $\mathbb{EFF}_1$ is a path category with homotopy $\Pi$-types. We will just outline the constructions and omit the lengthy verifications.

\begin{prop}{hexpinEFF1}
The path category $\mathbb{EFF}_1$ has homotopy exponentials.
\end{prop}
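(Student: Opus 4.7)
The plan is to imitate the construction in \refprop{EFFhasexp}, but bumped up one dimension to take account of the extra layer of 2-cells present in objects of $\mathbb{EFF}_1$. Concretely, the homotopy exponential $A^B$ will be built as follows. Its underlying set consists of morphisms $f:B\to A$ together with a complete package of trackings: a tracking for the action on 0-cells, a tracking for the action on 1-cells, a tracking for the action on 2-cells, and codes for the ``proof-term'' data ensuring functoriality up to 2-cells (i.e.\ the witnesses for $f(1_b)\Rightarrow 1_{fb}$ and $f(\rho\circ\pi)\Rightarrow f\rho\circ f\pi$). Realizers are codes for all of this data bundled together. A 1-cell $f\to g$ in $A^B$ will be a coded homotopy $H:f\simeq g$ in the sense of \refdefi{homotopyinEFF1}, i.e.\ G\"odel numbers for trackings of both components $H_1$ and $H_2$. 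A 2-cell between parallel coded homotopies $H,K:f\simeq g$ will be a coded witness $M:H\sim K$ in the sense introduced just after \refprop{adjequiv}, that is, a G\"odel number for a function computing for each $b\in B$ a 2-cell $H_b\Rightarrow K_b$ in $A$.

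The next step is to check that this data defines an object of $\mathbb{EFF}_1$. Identity, composition, and inverse 1-cells are obtained by the pointwise operations on homotopies in $A$ (and the naturality squares $H_2$ are built by pasting); identity, composition, and inverse 2-cells come from the corresponding operations on 2-cells in $A$, applied pointwise. All the required coherence 2-cells (unit laws, associators, interchange) are produced at each $b\in B$ from the corresponding coherence 2-cells of $A$; the needed partial computable functions exist because the operations in $A$ are all effective. Next, one defines ${\rm ev}: A^B\times B\to A$ by $(f,b)\mapsto f(b)$ on 0-cells, on a 1-cell $(H,\pi):(f,b)\to(g,b')$ by $g(\pi)\circ H_b:fb\to gb'$ (equivalently $H_{b'}\circ f(\pi)$, the two being linked by $H_2(\pi)$), and on a 2-cell $(M,n)$ by pasting $M_b$ with $g(n)$; again all trackings are immediate. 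Finally, given $h:C\times B\to A$, the transpose $\widehat h:C\to A^B$ sends $c\in C$ to the morphism $b\mapsto h(c,b)$ with trackings obtained by currying the trackings of $h$; the condition ${\rm ev}(\widehat h\times 1_B)\simeq h$ is in fact an equality on the nose, while uniqueness up to homotopy of $\widehat h$ amounts to the observation that any two trackings of the same underlying morphism $C\to A^B$ yield homotopic morphisms (this is where function extensionality enters, cf.\ \refrema{ondefihomexp}).

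The hard part will not be any single step but the sheer book-keeping needed to exhibit all the higher coherence data for $A^B$ as an object of $\mathbb{EFF}_1$: in particular one must check that the pointwise composition and inverse of homotopies is natural up to a computable 2-cell (so as to produce the associators and unitors at the level of 1-cells in $A^B$), and that the pointwise composition and inverse of ``higher homotopies'' $M:H\sim K$ interact correctly with these 2-cells to supply the top-level operations. All of this is entirely mechanical, reducing at each $b$ to facts already available in $A$, but it is tedious; this is exactly the kind of verification the author omits, and I would also only indicate the construction, referring to the analogous \refprop{EFFhasexp} and to \refrema{onhexpinEFF} regarding the various mild choices (such as the definition of ${\rm ev}$ on 1- and 2-cells) that do not make $A^B$ a genuine exponential but suffice for the homotopy-theoretic universal property.
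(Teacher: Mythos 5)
Your construction is essentially the paper's own: the paper likewise takes the underlying set of $A^B$ to be morphisms $f:B\to A$ with trackings for 0-, 1- and 2-cells, realized by those trackings, with 1-cells the coded homotopies $H:f\simeq g$ and 2-cells the coded witnesses $M:H\sim K$, and leaves the (lengthy, routine) verifications unstated. Two small quibbles: the paper deliberately excludes the functoriality proof terms from the data and realizers (they are unnecessary, and including them only adds bookkeeping), and your claim that ${\rm ev}(\widehat{h}\times 1_B)=h$ holds on the nose is too strong---since $h$ preserves identities and composition only up to computable 2-cells, the transpose agrees with $h$ on 1-cells only up to such 2-cells, giving a homotopy ${\rm ev}(\widehat{h}\times 1_B)\simeq h$ rather than an equality, which is all the definition of a homotopy exponential asks for.
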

\begin{proof}
If $B = (B, \beta, \mathcal{B})$ and $A = (A, \alpha, \mathcal{A})$ are two objects, then we define the homotopy exponential $A^B$ to be the following:
\begin{itemize}
\item At the set level it consists of all homomorphism $f: B \to A$ including all the trackings for 0,1 and 2-cells.
\item A realizer for $f$ consists of trackings for its 0,1 and 2-cells (but there is no reason to include the proof terms for functoriality).
\item 1-cells between $f$ and $g$ are coded homotopies $H: f \simeq g$.
\item 2-cells between $H$ and $K$ are coded homotopies $M: H \sim K$.
\end{itemize}
Checking that this works is a lengthy but routine exercise.
\end{proof}

\begin{prop}{functionspacesinEFF1}
The path category $\mathbb{EFF}_1$ has homotopy $\Pi$-types.
\end{prop}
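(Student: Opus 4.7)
The plan is to mimic the construction of \refprop{EFFhasPi} for $\mathbb{EFF}$, adding one further level of data to account for the 2-cell structure of $\mathbb{EFF}_1$. Let $f: (B, \beta, \mathcal{B}) \to (A, \alpha, \mathcal{A})$ and $g: (C, \gamma, \mathcal{C}) \to (B, \beta, \mathcal{B})$ be fibrations. First I would use \refprop{transportisanequivalence} to see that every 1-cell $\pi: a \to a'$ in $A$ induces a transport equivalence $\Gamma^B_\pi: B_a \to B_{a'}$ between fibres of $f$, and similarly for 1-cells in $B$ and fibres of $g$. Combining the 2-cell lifting in the definition of a fibration in $\mathbb{EFF}_1$ with \refprop{basicpropoftransport}(2), a 2-cell $n: \pi \Rightarrow \pi'$ in $A$ induces a coded fibrewise homotopy between $\Gamma^B_\pi$ and $\Gamma^B_{\pi'}$, and similarly one level down for $g$.

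I would then define $\Pi_f(g): (D, \delta, \mathcal{D}) \to (A, \alpha, \mathcal{A})$ so that elements of $D$ are tuples $(a, s)$ consisting of an $a \in A$ together with a morphism $s: B_a \to C$ which is a section of $g$ over $B_a$, equipped with trackings for its 0-, 1- and 2-cells; the realizer codes $\alpha(a)$ together with these trackings. A 1-cell from $(a, s)$ to $(a', s')$ is a pair $(\pi, H)$ where $\pi: a \to a'$ in $A$ and $H$ is a coded homotopy between $s' \circ \Gamma^B_\pi$ and the transport of $s$ along $\pi$, viewed as two sections of $g$ over $B_{a'}$. A 2-cell $(\pi, H) \Rightarrow (\pi', H')$ is a pair $(n, M)$ where $n: \pi \Rightarrow \pi'$ in $A$ and $M$ is a coded higher homotopy between $H$ and $H'$ compatible with the induced homotopies of transports. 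The map $\Pi_f(g)$ projects onto $a$, $\pi$ and $n$, and the evaluation $\varepsilon: f^*\Pi_f(g) \to g$ sends $(b, (a, s))$ to $s(b) \in C$.

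Once this construction is pinned down, the work splits into three tasks: (i) verifying that $(D, \delta, \mathcal{D})$ is actually an object of $\mathbb{EFF}_1$, with identities, compositions, inverses and all the structural 2-cells at both levels inherited from $A$ and $C$; (ii) checking that $\Pi_f(g)$ is a fibration, by using the 1- and 2-cell lifting properties of $g$ to lift 1-cells and 2-cells in $A$ up to $D$; and (iii) establishing the universal property up to fibrewise homotopy over $A$, turning a competitor $m: f^*h \to g$ into a transpose $M: X \to D$ by reading $m$ fibrewise. I expect the main obstacle to lie in (i), and specifically in the coherence of the homotopy $H$ appearing in a 1-cell of $D$: producing the structural 2-cells of $D$ requires carefully combining the various clauses of \refprop{basicpropoftransport} with the 2-cell lifting property of fibrations to ensure that associativity, unit and inverse laws hold up to the appropriate 2-cells. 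Once these coherences are in place, the universal property follows by the same fibrewise argument as in $\mathbb{EFF}$; the remaining verifications are routine but lengthy, as promised.
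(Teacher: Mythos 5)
Your construction is essentially the paper's own: elements of $\Pi_f(g)$ are pairs $(a,s)$ of a point with a tracked section over the fibre $B_a$, 1-cells are 1-cells $\pi$ in $A$ together with coded homotopies comparing $s'\circ\Gamma^B_\pi$ with $\Gamma^C_\pi\circ s$, and 2-cells are 2-cells $n:\pi\Rightarrow\pi'$ together with higher homotopies mediated by the fibrewise homotopies $\Gamma_n$ from \refprop{basicpropoftransport}(2), exactly as in the paper, which likewise leaves the lengthy verifications unspelled. The only slight imprecision is your phrase ``the transport of $s$ along $\pi$, viewed as two sections over $B_{a'}$'': the paper compares $\Gamma^C_\pi\circ s$ and $s'\circ\Gamma^B_\pi$ as maps $B_a\to C$, which is the same data up to composing with the equivalence $\Gamma^B_\pi$.
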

\begin{proof} If $(A, \alpha, \mathcal{A})$ is an object in $\mathbb{EFF}_1$ and $a \in A$, then there is a morphism $1 \to (A, \alpha, \mathcal{A})$, which we also call $a$, sending the unique 0-cell in 1 to $a$ and the higher cells in 1 to the identity arrows on $a$ and $1_a$. If $f: (B, \beta, \mathcal{B}) \to (A, \alpha, \mathcal{A})$ is a fibration, then we will write $B_a$ for the object in $\mathbb{EFF}_1$ obtained by pulling back $f$ along $a: 1 \to A$. Moreover, if $\pi: a \to a'$ is any 1-cell in $A$, then this induces a homotopy between $a,a': 1 \to A$, which we will also call $\pi$. By \refprop{transportisanequivalence} this induces a homotopy equivalence $\Gamma^B_{\pi}: B_a \to B_{a'}$. In addition, if $n: \pi \Rightarrow \pi'$ is any 2-cell in $A$, this induces a fibrewise homotopy $\Gamma_n$ between $\Gamma_{\pi}$ and $\Gamma_{\pi'}$ (see \refprop{basicpropoftransport}.(2)).

If $g: (C, \gamma, \mathcal{C}) \to (B, \beta, \mathcal{B})$ is another fibration, we define $\Pi_f(g)$ to be the obvious projection to $A$ from the following object:
\begin{itemize}
\item The set consists of elements $a \in A$ together with sections $s: B_a \to C$ of $g$ with the required tracking data for 0,1,2-cells.
\item A realizer for such an element consists of a realizer for $a \in A$ together with the tracking data for the 0,1,2-cells.
\item 1-cells between $(a, s)$ and $(a',s')$ consist of a 1-cell $\pi: a \to a'$ as well as a coded fibrewise homotopy $H: \Gamma_{\pi}^C \circ s \simeq s' \circ \Gamma^B_{\pi}: B_a \to C$.
\item 2-cells between those consist of a 2-cell $n: \pi \to \pi'$ in $A$ together with a coded homotopy $M: (s' \circ \Gamma_n^B) \circ H \sim H' \circ (\Gamma^C_n \circ s)$ between the homotopies $\Gamma_\pi^C \circ s \simeq s' \circ \Gamma_{\pi'}^B$.
\end{itemize}
Some very lengthy verifications are now in order.
\end{proof}

\section{Discrete fibrations in $\mathbb{EFF}_1$}

Within the path category $\mathbb{EFF}_1$ we can again isolate an impredicative class of small fibrations given by the discrete fibrations.

\begin{defi}{discretefibrationsinEFF1} A fibration $f: (B, \beta, \mathcal{B}) \to (A, \alpha, \mathcal{A})$ is a \emph{standard discrete fibration} if for elements $b, b' \in B$ we have:
\begin{quote}
if $f(b) = f(b')$ and $\beta(b) = \beta(b')$, then $b = b'$.
\end{quote}
A fibration $f$ is \emph{discrete} if $f$ can be written as $gw$ with $w$ being an equivalence and $g$ a standard discrete fibration.
\end{defi}

By the same argument as before, we obtain:

\begin{prop}{discrfibrimprclassofsmallmapsagain} The discrete fibrations form an impredicative class of small fibrations in that:
\begin{enumerate}
\item every isomorphism is a discrete fibration, 
\item discrete fibrations are stable under homotopy pullback,
\item discrete fibrations are closed under composition, and
\item discrete fibrations are closed under $\Pi_f$ with $f$ a (not necessarily discrete) fibration.
\end{enumerate}
\end{prop}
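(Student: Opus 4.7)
The plan is to mirror the proof of Proposition \ref{discrfibrimprclassofsmallmaps}: first verify the four closure properties for \emph{standard} discrete fibrations (where the underlying set condition can be checked directly), and then deduce the statement for discrete fibrations by using that any discrete fibration is obtained from a standard one by pre-composition with an equivalence. Since discrete fibrations are by definition stable under this up-to-equivalence notion, and homotopy pullback, composition, and $\Pi_f$ all preserve equivalences (the last via Theorem \ref{hlevelsandPi} and the fact that $\Pi_f$ is a functor up to fibrewise homotopy), it will suffice to check everything at the level of standard discrete fibrations, exactly as in the argument for $\mathbb{EFF}$.

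The concrete verifications I would carry out, in order, are as follows. First, every isomorphism is evidently standard discrete: the defining implication ``$f(b)=f(b')$ and $\beta(b)=\beta(b')$ imply $b=b'$'' holds vacuously because $f$ is already injective on the underlying set. Second, for stability under pullback along an arbitrary map $g \colon C \to A$, I appeal to the explicit pullback description in Lemma \ref{onpbks}: an element of $D=C\times_A B$ is a pair $(c,b)$, its realizer is $\langle \gamma(c),\beta(b)\rangle$, and if two pairs $(c,b),(c,b')$ have the same projection $c$ and the same realizer, then $f(b)=g(c)=f(b')$ and $\beta(b)=\beta(b')$, so standard discreteness of $f$ forces $b=b'$. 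Third, for closure under composition, if $f\colon B\to A$ and $g\colon C\to B$ are standard discrete and $fg(c)=fg(c')$ with $\gamma(c)=\gamma(c')$, then because a single partial computable function computes $\beta(g(-))$ from $\gamma(-)$, the realizers $\beta(g(c)),\beta(g(c'))$ agree; standard discreteness of $f$ gives $g(c)=g(c')$, and then standard discreteness of $g$ gives $c=c'$.

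The main work lies in step four: verifying that the explicit construction of $\Pi_f(g)$ from Proposition \ref{functionspacesinEFF1} preserves standard discreteness. Here an element of $\Pi_f(g)$ is a pair $(a,s)$ where $s\colon B_a\to C$ is a section of $g$ equipped with trackings for its $0$-, $1$-, and $2$-cells, and its realizer consists of $\alpha(a)$ together with those three trackings. Given two such pairs $(a,s),(a,s')$ with identical realizers, the shared tracking data and fixed input realizers from $B_a$ produce identical output realizers, and since $gs(b)=b=gs'(b)$ while $\gamma(s(b))=\gamma(s'(b))$, the standard discreteness of $g$ forces $s(b)=s'(b)$ on $0$-cells; the same argument, applied to $g$'s condition at the level of $1$- and $2$-cells, identifies $s$ and $s'$ as morphisms in $\mathbb{EFF}_1$. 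The hard part here is not any single computation but the bookkeeping: one has to keep track of all three layers of cells, and check that ``same realizer'' at the $\Pi_f(g)$-level propagates cleanly through every layer, using the determinism of partial computable functions at each step.

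Finally, to pass from standard discrete fibrations to discrete fibrations, I would invoke Remark \ref{onalternativedeffordiscrfibr}-style reasoning (which extends verbatim to $\mathbb{EFF}_1$ since the proof of Proposition \ref{hlevelsstableunderpbk} used only generic path-category axioms): homotopy pullback, composition, and $\Pi_f$ all respect the relation ``equal to $gw$ with $w$ an equivalence and $g$ standard discrete'', using Proposition \ref{pullingback} and the fact that $\Pi_f$ preserves equivalences. This yields the four clauses of the proposition.
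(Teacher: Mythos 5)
Your proposal is correct and follows essentially the same route as the paper, whose proof of this proposition is simply ``by the same argument as before'', i.e.\ the argument for $\mathbb{EFF}$: check that isomorphisms are standard discrete, that standard discrete fibrations are stable under pullback and composition, and that the explicit construction of $\Pi_f(g)$ preserves standard discreteness, and then deduce the statement for (general) discrete fibrations by the up-to-equivalence reasoning of the kind in \refprop{hlevelsstableunderpbk}. Your verification of the $\Pi_f$ case and the realizer bookkeeping in $\mathbb{EFF}_1$ is exactly the intended adaptation (the only small imprecision being that standard discreteness imposes no condition on $1$- and $2$-cells, where one only needs determinism of the trackings, as you in fact note).
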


Also, it is possible to again characterise discrete fibrations by an orthogonality condition, in the following sense. Let $\mathbb{J}$ be the object in $\mathbb{EFF}_1$ with two 0-cells both having realizer $0$ and only identity 1- and 2-cells, also given by the natural number 0. If $f: B \to A$ is a fibration, then the natural way of computing the homotopy exponential $A^\mathbb{J}$ gives rise to a (strictly) commuting square
\begin{center}
\begin{tikzcd}
B \ar[r] \ar[d, "f"'] & B^\mathbb{J} \ar[d, "f^\mathbb{J}"] \\
A \ar[r] & A^\mathbb{J}, 
\end{tikzcd}
\end{center}
in which the horizontal arrows are the diagonals map and $f^\mathbb{J}$ is also a fibration.

\begin{prop}{discretealafreydinEFF1} {\bf (ZFC)} The following are equivalent for a fibration $f: B \to A$:
\begin{enumerate}
 \item $f$ is discrete.
 \item The square above is a homotopy pullback.
 \item There are computable functions $\varphi$ and $\psi$ such that for any two elements $b_0,b_1 \in B$ with $a = f(b_0) = f(b_1)$ and $n = \beta(b_0) = \beta(b_1)$ the element $\varphi(n)$ belongs to $\mathcal{B}(b_0,b_1)$ and $f(\varphi(n)) = 1_a$, and such that for elements $b_0,b_1,c_0,c_1 \in B$ with $f(b_0) = f(b_1)$, $f(c_0)=f(c_1)$, $n = \beta(b_0) = \beta(b_1), m = \beta(c_0) = \beta(c_1)$ and $\pi \in \mathcal{B}(b_0,c_0) \cap \mathcal{B}(b_1,c_1)$ we have that $\psi(n,m,\pi)$ is a 2-cell filling
\begin{center}
\begin{tikzcd}
b_0 \ar[r, "\pi"] \ar[d, "\varphi(n)"'] & c_0 \ar[d, "\varphi(m)"] \\
b_1 \ar[r, "\pi"'] & c_1. 
\end{tikzcd}
\end{center}
\end{enumerate}
\end{prop}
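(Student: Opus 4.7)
We follow the strategy of \refprop{discretealafreydinEFF}, proving (1)$\Rightarrow$(2)$\Rightarrow$(3)$\Rightarrow$(1), with extra care for the 2-cell data in $\mathbb{EFF}_1$.

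For (1)$\Rightarrow$(2), if $f$ is a standard discrete fibration then the canonical comparison $d: B \to B^{\mathbb{J}} \times_{A^{\mathbb{J}}} A$ is an isomorphism on the nose: an element of the pullback is a pair $(b_0,b_1)$ with $f(b_0)=f(b_1)$ and $\beta(b_0)=\beta(b_1)$, and standard discreteness forces $b_0=b_1$; the analogous remark takes care of 1- and 2-cells. Since the property of being a homotopy pullback is homotopy-invariant, the general case $f=gw$ follows.

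For (2)$\Rightarrow$(3), projection on either component of $B^{\mathbb{J}} \times_{A^{\mathbb{J}}} A$ gives a retraction $p$ with $pd=1_B$, so $d$ is an equivalence iff $dp \simeq 1$. Unwinding such a homotopy in the sense of \refdefi{homotopyinEFF1}: its $H_1$-part provides, for each $(b_0,b_1)$ in the pullback (with $f(b_0)=f(b_1)$ and $n:=\beta(b_0)=\beta(b_1)$), a 1-cell $(b_0,b_0)\to(b_0,b_1)$ whose coded realizer depends only on $n$; reading off its second component yields $\varphi(n)\in\mathcal{B}(b_0,b_1)$ with $f(\varphi(n))=1_{fb_0}$. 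The $H_2$-part supplies naturality 2-cells for every 1-cell in the pullback, and its second-component projection is exactly the required $\psi$.

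For (3)$\Rightarrow$(1), use AC in the metatheory to choose a transversal $B' \subseteq B$ for the equivalence relation $b \sim b'$ iff $f(b)=f(b')$ and $\beta(b)=\beta(b')$, and endow $B'$ with the structure restricted from $B$; by the choice of transversal $fi: B'\to A$ (with $i: B'\hookrightarrow B$ the inclusion) is standard discrete, and one checks it is a fibration using condition (3) together with lifting property~(2) of $f$. Define a morphism $r: B \to B'$ by letting $r(b)$ be the representative of $[b]$; reading $\varphi(n)$ as a 1-cell $r(b)\to b$, set $r(\pi) := \varphi(\beta(b'))^{-1}\circ\pi\circ\varphi(\beta(b))$ for $\pi:b\to b'$ and $r(n) := 1_{\varphi(\beta(b'))^{-1}}\circ_0 n\circ_0 1_{\varphi(\beta(b))}$ on 2-cells. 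Then $\varphi$ and $\psi$, combined with the bigroupoidal coherence 2-cells of $B$, assemble into coded homotopies $ir \simeq 1_B$ and $ri \simeq 1_{B'}$, so $i$ is an equivalence. Hence $f \simeq (fi)\circ r$ with $fi$ standard discrete and $r$ an equivalence, and by \cite[Proposition~2.31]{bergmoerdijk18} we can replace $r$ by a homotopic $r'$ with $f = (fi)r'$ strictly, exhibiting $f$ as discrete.

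The main obstacle will be verifying that $r$ is a well-defined morphism of $\mathbb{EFF}_1$: preservation of identities and of composition by $r$ holds only up to 2-cells, which have to be produced from the unit, associativity, and inverse laws of the bigroupoid structure on $B$, and all such 2-cells must be uniformly computable from the relevant realizers. A secondary check is that $\varphi$, $\psi$, and the analogous coherence 2-cells really do assemble into the $H_1$, $H_2$ pair required by \refdefi{homotopyinEFF1}. Neither step is conceptually difficult, but both are of the same lengthy and tedious flavor as the omitted verifications in \refprop{hexpinEFF1} and \refprop{functionspacesinEFF1}.
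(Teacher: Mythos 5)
First, for calibration: the paper states this proposition without any proof, so the only template is the proof of \refprop{discretealafreydinEFF}; your overall plan --- the cycle (1)$\Rightarrow$(2)$\Rightarrow$(3)$\Rightarrow$(1) with the retraction $p$, the unwinding of $dp \simeq 1$, and the AC-chosen transversal $B'$ --- is exactly the intended adaptation, and your (3)$\Rightarrow$(1), including the check that $fi$ is a fibration via the second clause of \refdefi{fibrationinEFF1} and the conjugation formulas for $r$ on 1- and 2-cells, is sound modulo the routine verifications you flag.

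There is, however, a genuine gap in (2)$\Rightarrow$(3). You claim that reading off the second component of the $H_1$-part of a homotopy $dp \simeq 1$ ``yields $\varphi(n) \in \mathcal{B}(b_0,b_1)$ with $f(\varphi(n)) = 1_{fb_0}$''. It does not: a 1-cell of $A \times_{A^{\mathbb{J}}} B^{\mathbb{J}}$ from $(b_0,b_0)$ to $(b_0,b_1)$ consists of a 1-cell $\pi_A: a \to a$ in $A$ together with a cell of $B^{\mathbb{J}}$ lying over it, so what you read off is an element of $\mathcal{B}(b_0,b_1)$ whose $f$-image is some loop $\pi_A$ on $a$, with no reason to equal $1_a$ on the nose. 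The strict equation $f(\varphi(n)) = 1_a$ is precisely the new wrinkle of the $\mathbb{EFF}_1$ statement (the $\mathbb{EFF}$ version of (3) has no such clause), so ``unwinding'' alone cannot deliver it; the same issue resurfaces in your (1)$\Rightarrow$(2), since morphisms of $\mathbb{EFF}_1$ preserve identities and composition only up to 2-cells, so that even for a standard discrete fibration one cannot simply take $1_{b_0}$ as the lift of $1_a$. The repair: from the read-off $\rho$, which lies in $\mathcal{B}(b_0,b_0) \cap \mathcal{B}(b_0,b_1)$, build a cell of $\mathcal{B}(b_0,b_1)$ (e.g.\ a composite with an inverse of its $(b_0,b_0)$-reading) together with a computable 2-cell from its $f$-image to $1_a$, assembled from the coherence and functoriality 2-cells, and then invoke the second clause of \refdefi{fibrationinEFF1} to replace it by a cell lying strictly over $1_a$; the $H_2$-fillers must then be conjugated by the correcting 2-cells to give $\psi$ for the corrected $\varphi$. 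A smaller caveat of the same flavour: the comparison map in (1)$\Rightarrow$(2) is an isomorphism on the nose only for the stripped-down model of $(-)^{\mathbb{J}}$ (pairs of cells with a common code); for the exponential of \refprop{hexpinEFF1}, whose 1-cells carry $H_2$-data, it is merely an equivalence, which suffices but needs a word of argument.
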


\section{Propositions in $\mathbb{EFF}_1$}

\begin{prop}{existofproptrinEFF1}
(Existence of propositional truncation) Every fibration $f: B \to A$ factors as a map $g: B \to C$ followed by a fibration of propositions $h: C \to A$, in a ``universal'' way: if $g': B \to C'$ and $h': C' \to A$ is another such factorisation, there is a map $d: C \to C'$ over $A$ with $dg \simeq_A g'$, with the map $d$ itself being unique up to fibrewise homotopy over $A$ with this property.
\end{prop}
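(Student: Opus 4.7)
The plan is to adapt the construction from \refprop{existofproptrinEFF} directly to $\mathbb{EFF}_1$, the main difference being extra bookkeeping at the 2-cell level. Given $f: (B, \beta, \mathcal{B}) \to (A, \alpha, \mathcal{A})$, I would define $C := (B, \beta, \mathcal{C})$ with the same underlying set and realiser as $B$, but with 1- and 2-cells pulled back from $A$:
\[ \mathcal{C}(b, b') := \mathcal{A}(fb, fb'), \qquad \mathcal{C}(\pi, \pi') := \mathcal{A}(\pi, \pi'). \]
All identities, inverses, compositions, horizontal compositions and the coherence 2-cells (units, associators, inverse laws) are then inherited verbatim from $A$, so $C$ is automatically an object of $\mathbb{EFF}_1$. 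The factorisation is $B \xrightarrow{g} C \xrightarrow{h} A$, where $g$ is the identity on 0-cells and $f$ on 1- and 2-cells, while $h$ is $f$ on 0-cells and the identity on 1- and 2-cells.

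That $h$ is a fibration is immediate: each of the three lifting conditions becomes a triviality once $\mathcal{C}$ is unfolded. For instance, to lift a 1-cell $\pi : hc \to a$ one invokes the first lifting condition for $f$ to produce a 0-cell $c' \in B$ with $fc' = a$; then $\pi$ itself serves as the required 1-cell $c \to c'$ in $C$, because $\mathcal{C}(c, c') = \mathcal{A}(fc, a)$ by definition. The 2-cell liftings are handled the same way. To show $h$ is propositional, I would check that the fibrewise diagonal $\Delta: C \to C \times_A C$ is an equivalence, with homotopy inverse given by the first projection $\pi_1$. The composite $\pi_1 \Delta$ is the identity on the nose, and a fibrewise homotopy $\Delta \pi_1 \simeq_A 1$ is supplied by sending $(c_1, c_2) \in C \times_A C$ to the identity 1-cell $1_{fc_1} \in \mathcal{A}(fc_1, fc_1) = \mathcal{A}(fc_1, fc_2) = \mathcal{C}(c_1, c_2)$ (using $fc_1 = fc_2$), with the higher-cell data supplied by the unit 2-cells of $A$.

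For the universal property, given another factorisation $B \xrightarrow{g'} C' \xrightarrow{h'} A$ with $h'$ propositional, I would define $d: C \to C'$ on 0-cells by $d(b) := g'(b)$. For a 1-cell $\pi \in \mathcal{C}(b, b') = \mathcal{A}(fb, fb')$, I use the fibration property of $h'$ to lift $\pi$ (viewed as a 1-cell $h'(g'b) \to h'(g'b')$ in $A$) starting at $g'b$, obtaining some $c'' \in C'$ with $h'(c'') = fb'$ and $\lambda: g'b \to c''$ with $h'(\lambda) = \pi$. Since $h'$ is propositional, its fibrewise diagonal is an equivalence, so I can extract a 1-cell $\mu: c'' \to g'b'$ over $1_{fb'}$ and, after adjusting by the appropriate unit 2-cells, set $d(\pi) := \mu \circ \lambda$. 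The action of $d$ on 2-cells is obtained analogously via the third lifting condition for $h'$. Uniqueness of $d$ up to fibrewise homotopy over $A$ is then automatic from the propositional character of $h'$: any two candidate maps agree after composition with $h'$, and the equivalence $C' \to C' \times_A C'$ upgrades that equality to a fibrewise homotopy.

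The hardest part of the argument is the lengthy but routine verification that the action of $d$ on 1- and 2-cells genuinely assembles into a morphism of $\mathbb{EFF}_1$ (with all trackings made effective) and that $d g \simeq_A g'$. As in the proof of \refprop{existofproptrinEFF}, I would leave these bookkeeping verifications to the reader.
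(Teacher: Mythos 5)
Your construction of $C$ is exactly the paper's: the same underlying set and realizers as $B$, with 1-cells $\mathcal{C}(b,b') = \mathcal{A}(fb,fb')$ and 2-cells inherited from $A$, which is all the paper's proof gives before leaving the verifications to the reader. Your additional sketches (the lifting conditions for $h$, propositionality via the fibrewise diagonal, and the construction of $d$ using the fibration and propositionality of $h'$) fill in those omitted routine checks in the expected way, so this is essentially the same proof.
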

\begin{proof}
Let me just give the construction: if $f: (B, \beta, \mathcal{B}) \to (A, \alpha, \mathcal{A})$ is a fibration, then we define $C := (B, \beta, \mathcal{C})$ with $\mathcal{C}(b,b') = \mathcal{A}(fb,fb')$ and $\mathcal{C}(\pi,\pi')=\mathcal{A}(f\pi,f\pi')$. 
\end{proof}

Note that this means that, up to equivalence, any propositional fibration \[ f: (B, \beta, \mathcal{B}) \to (A, \alpha, \mathcal{A}) \] can be assumed to satisfy $\mathcal{B}(b,b') = \mathcal{A}(fb,fb')$ and $\mathcal{B}(\pi,\pi') = \mathcal{A}(f\pi, f\pi')$; if $f$ is also discrete, we may also assume that $B \subseteq A \times \mathbb{N}$ with $f$ being the projection on the first and $\beta$ the projection on the second coordinate. 

\begin{prop}{discrpropsclassified}
In $\mathbb{EFF}_1$ the class of discrete propositional fibrations is an impredicative class of small fibrations with a univalent representation.
\end{prop}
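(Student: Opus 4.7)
The argument closely parallels the proof of the analogous result for $\mathbb{EFF}$. That the discrete propositional fibrations form an impredicative class of small fibrations is immediate from the preceding proposition together with the fact that $\Pi_f$ preserves hlevels. The substance of the proof is thus the construction and verification of a univalent representation $\pi : E \to U$.

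I would take $U = (U,\upsilon,\mathcal{U})$ with 0-cells the subsets $X \subseteq \mathbb{N}$, all realized by $0$, with 1-cells $X \to Y$ given by pairs of G\"odel numbers $(r,s)$ where $r$ tracks a total computable function $X \to Y$ and $s$ one in the other direction, and with every 2-cell set equal to $\{0\}$. The identity 1-cells, inverses and compositions are given by standard trackings, and the required 2-cells witnessing associativity, units and inverses are forced to be trivial. For $E$ the 0-cells are pairs $(X,x)$ with $x \in X$, realized by $x$, and the 1- and 2-cell structure is copied from $U$ on the first coordinate; the map $\pi$ is the obvious first projection. Using the normal form from the remark following the propositional truncation proposition---any discrete propositional fibration $f : B \to A$ can be assumed to satisfy $B \subseteq A \times \mathbb{N}$ with the projections as $f$ and $\beta$, and $\mathcal{B}(b,b') = \mathcal{A}(fb,fb')$ and $\mathcal{B}(\pi,\pi') = \mathcal{A}(f\pi,f\pi')$---the classifying map $k : A \to U$ sends $a$ to $B_a = \{ n \in \mathbb{N} : (a,n) \in B \}$ and a 1-cell $\pi : a \to a'$ in $A$ to the pair of trackings for $\Gamma^B_\pi$ and $\Gamma^B_{\pi^{-1}}$; on 2-cells it is forced to be trivial.

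The remaining verifications are in principle routine: $\pi$ is a standard discrete fibration because the 0- and 1-cell lifts are computed from the trackings and all 2-cell lifts are trivial; $\pi$ is propositional because between any $(X,x)$ and $(X,x')$ the pair of identity trackings on $X$ yields a 1-cell, which makes $E \to E \times_U E$ a trivial fibration in $\mathbb{EFF}_1(U)$; and for $f$ in normal form the pullback $k^*\pi$ is isomorphic to $f$ over $A$ on the nose. The main obstacle is univalence: given $h, g : A \to U$ and an equivalence $w : h^*\pi \to g^*\pi$ in the slice over $A$, one must construct a homotopy $H : h \simeq g$ that induces $w$ up to fibrewise homotopy. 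The triviality of the 2-cell structure on $U$ collapses the higher coherences demanded by the definition of a homotopy in $\mathbb{EFF}_1$, so the work is entirely at the 0- and 1-cell level: for each $a$ the equivalence $w_a$ between the fibres $E_{h(a)} = h(a)$ and $E_{g(a)} = g(a)$ yields exactly the pair of trackings one needs for a 1-cell $h(a) \to g(a)$ in $U$, and naturality of $w$ in $a$ provides the 2-cell data $H_2$ of the homotopy, for which the receiving sets in $\mathcal{U}$ are singletons and hence contain a unique filler. Uniqueness of the classifying map up to homotopy falls out of the same analysis, and this completes the proof that $\pi$ is a univalent representation.
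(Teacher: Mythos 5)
Your construction is exactly the one in the paper: the same universe $U$ of subsets of $\mathbb{N}$ with pairs of trackings as 1-cells and unique 2-cells, the same $E$, and the same appeal to the preceding proposition plus preservation of hlevels by $\Pi_f$ for impredicativity. The paper omits the verification that $\pi\colon E \to U$ is a univalent representation, and your sketch of it (classifying map via transport trackings on the normal form, with the trivial 2-cell structure of $U$ absorbing the higher coherences) is sound and consistent with how the analogous verification is carried out for $\mathbb{EFF}$.
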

\begin{proof}
Discrete propositional fibrations are classified by the following $\pi: E \to U$: 
\begin{itemize}
\item $U$ consists of subsets $A$ of $\mathbb{N}$, with every such realized by 0.
\item $\mathcal{U}(A,B)$ consists of a pair a realizers $(r: A \to B, s: B \to A)$.
\item Between any two such parallel 1-cells there exists a unique 2-cell.
\item $E$ consists of pairs $(A, a)$ with $A \subseteq \mathbb{N}$ and $a \in A$, realized by $a$.
\item $\mathcal{E}((A,a),(B,b))$ is just $\mathcal{U}(A,B)$.
\item And, again, any two parallel 1-cells are connected by a unique 2-cell.
\end{itemize}
The projection $\pi: E \to U$ is clearly a fibration of discrete propositions. We omit the verification that it is a univalent representation.
\end{proof}

\begin{prop}{inZFC}
{\bf (ZFC)} In $\mathbb{EFF}_1$ every propositional fibration is discrete. Hence the class of discrete fibrations satisfies propositional resizing.
\end{prop}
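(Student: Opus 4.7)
The strategy is to adapt the proof of \refprop{resizinginEFF} to $\mathbb{EFF}_1$, with the extra bookkeeping that the 2-cell data demands. First, using \refprop{existofproptrinEFF1} and the remark following it, I may replace any propositional fibration $f: (B, \beta, \mathcal{B}) \to (A, \alpha, \mathcal{A})$ by an equivalent one satisfying $\mathcal{B}(b, b') = \mathcal{A}(fb, fb')$ and $\mathcal{B}(\pi, \pi') = \mathcal{A}(f\pi, f\pi')$; since discreteness is homotopy invariant, it suffices to show this replacement is discrete.

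Next, I would construct a standard discrete propositional fibration $\pi \colon C \to A$ by setting
\[
C = \{\, (a, n) \in A \times \mathbb{N} \;:\; \exists b \in B,\ f(b) = a \text{ and } \beta(b) = n \,\},
\]
with $\gamma(a, n) = n$, $\mathcal{C}((a,n),(a',n')) = \mathcal{A}(a,a')$ and $\mathcal{C}(\rho, \rho') = \mathcal{A}(\rho, \rho')$. The groupoidal 0-,1-, and 2-cell structure on $C$ is pulled back directly from $A$. The projection $\pi$ is then by construction a standard discrete fibration, and it is propositional because $C \to C \times_A C$ is an isomorphism on both 1- and 2-cells.

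There is now an evident morphism $j \colon B \to C$ over $A$, sending $b$ to $(f(b), \beta(b))$ and acting as the identity on 1- and 2-cells (which is well typed since $\mathcal{B}(b, b') = \mathcal{A}(fb, fb') = \mathcal{C}(jb, jb')$, and similarly for 2-cells). Using \textbf{AC}, I would pick for every $(a, n) \in C$ a choice of witness $s(a, n) \in B$ with $f(s(a, n)) = a$ and $\beta(s(a, n)) = n$; on 1- and 2-cells the same identification of $\mathcal{B}$ with the pullback of $\mathcal{A}$ along $f$ allows $s$ to act as the identity, producing a morphism $s \colon C \to B$ over $A$. Because both $f$ and $\pi$ are propositional fibrations and $sj, js$ are endomorphisms over $A$ sending each realized object back to an object with the same realizer, one easily writes down fibrewise homotopies $sj \simeq_A 1_B$ and $js \simeq_A 1_C$, the required 2-cells being furnished by the propositional path objects associated with $f$ and $\pi$. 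Thus $j$ is an equivalence and $f = \pi \circ j$ exhibits $f$ as discrete.

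The main obstacle I anticipate is purely bookkeeping: one must verify that the chosen $s$ is a genuine morphism of $\mathbb{EFF}_1$, meaning the trackings for its 0-, 1- and 2-cell actions are all computable from realizers, and that the fibrewise homotopies $sj \simeq_A 1_B$ and $js \simeq_A 1_C$ can be exhibited with their 2-cell components tracked as well. Propositionality of $f$ forces $\mathcal{B}$ to be completely determined by $\mathcal{A}$, so these trackings are already encoded in the trackings of $f$ itself; therefore no new computational content is needed beyond what appeared in the proof of \refprop{resizinginEFF}, and the resizing conclusion follows because every discrete propositional fibration is a discrete fibration.
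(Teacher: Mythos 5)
Your proposal is correct and follows essentially the same route as the paper: define $C$ as the image of $(f,\beta)$ in $A\times\mathbb{N}$ with 1- and 2-cells inherited from $A$, note the projection is a standard discrete propositional fibration, use \textbf{AC} to produce a morphism $C\to B$ over $A$, and invoke propositionality of both fibrations to see the back-and-forth maps are fibrewise homotopy inverses. Your preliminary normalisation of $f$ via propositional truncation is a harmless (and slightly tidier) extra step that the paper leaves implicit.
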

\begin{proof}
Suppose $f: (B, \beta, \mathcal{B}) \to (A, \alpha, \mathcal{A})$ is propositional, and let \[ C = \{ (a, n) : (\exists b) \, f(b) = a, \beta(b) = n \}, \] with $\gamma(a, n) = n$ and $\mathcal{C}((a,n),(a', n')) = \mathcal{A}(a, a')$, and also the 2-cells in $C$ being those in $A$. Clearly, the projection $\pi: C \to A$ is a discrete fibration of propositions. There is an obvious morphism $B \to C$ over $A$, and, using the axiom of choice, there is also a morphism $C \to B$ over $A$. Since $f$ and $\pi$ are propositional, the maps going back and forth between $B$ and $C$ are equivalences, so $f$ is discrete as well.
\end{proof}

\section{Sets in $\mathbb{EFF}_1$}

So far we have only seen properties of $\mathbb{EFF}_1$ which are also properties of $\mathbb{EFF}$. The main difference, however, is that the fibrations of discrete sets also form an impredicative class of small fibrations in $\mathbb{EFF}_1$ with a univalent representation. Before we show this, let us first show that $\mathbb{EFF}_1$ allows a form of 0-truncation.

\begin{prop}{existof0trinEFF2}
(Existence of 0-truncation) Every fibration $f: B \to A$ factors as a map $g: B \to C$ followed by a fibration of sets $h: C \to A$, in a ``universal'' way: if $g': B \to C'$ and $h': C' \to A$ is another such factorisation, there is a map $d: C \to C'$ over $A$ with $dg \simeq_A g'$, with the map $d$ itself being unique up to fibrewise homotopy over $A$ with this property.
\end{prop}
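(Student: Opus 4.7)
The plan is to mirror the construction of the propositional truncation in \refprop{existofproptrinEFF1}, but one hlevel higher: instead of pulling both the 1-cells and the 2-cells back from $A$, I pull back only the 2-cells and leave the 1-cells of $B$ intact. Concretely, given a fibration $f: (B, \beta, \mathcal{B}) \to (A, \alpha, \mathcal{A})$, set $C := (B, \beta, \mathcal{C})$ with
\[ \mathcal{C}(b,b') := \mathcal{B}(b,b') \quad \mbox{and} \quad \mathcal{C}(\pi,\pi') := \mathcal{A}(f\pi, f\pi'). \]
The groupoid structure on 1-cells of $C$ is inherited from $B$, while the 2-cell operations are those of $A$ pasted via the functoriality 2-cells of $f$ (for instance, to compose on 2-cells between $\rho \circ_B \pi$ and $\rho' \circ_B \pi'$ one conjugates the $A$-composition by the functoriality witnesses $f(\rho \circ_B \pi) \Rightarrow f\rho \circ f\pi$ and $f(\rho' \circ_B \pi') \Rightarrow f\rho' \circ f\pi'$). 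The factorisation is $g: B \to C$, which is the identity on 0- and 1-cells and sends $m \in \mathcal{B}(\pi, \pi')$ to $f(m) \in \mathcal{A}(f\pi, f\pi') = \mathcal{C}(\pi, \pi')$, followed by $h: C \to A$, which agrees with $f$ on 0- and 1-cells and is the identity on 2-cells.

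First I would verify that $C$ is a genuine object of $\mathbb{EFF}_1$ and that $g, h$ are morphisms with $hg = f$; this is routine since all the required data are pasted from the corresponding data for $B$, $A$, and $f$. Next, I would check that $h$ is a fibration: conditions (1) and (2) of \refdefi{fibrationinEFF1} are inherited directly from $f$ (the 1-cells of $C$ being those of $B$), and condition (3) is immediate because $\mathcal{C}(\pi, \pi') = \mathcal{A}(f\pi, f\pi')$ and $h$ acts as the identity on 2-cells.

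To see that $h$ is a fibration of sets, I would show that the fibrewise path object $P_A(C) \to C \times_A C$ is propositional. The key observation is that if $\rho, \rho': c \to c'$ are parallel 1-cells of $C$ with $h(\rho) = h(\rho')$, then $\mathcal{C}(\rho, \rho') = \mathcal{A}(f\rho, f\rho')$ already contains $1_{f\rho}$, so there is always a 2-cell between them in $C$; and by local codiscreteness of $A$ any two such 2-cells are connected by a unique 3-cell, which amounts to saying that the fibres of $P_A(C) \to C \times_A C$ are contractible. For the universal property, given a rival factorisation $B \xrightarrow{g'} C' \xrightarrow{h'} A$ with $h'$ a fibration of sets, I would define $d: C \to C'$ to agree with $g'$ on 0- and 1-cells; on a 2-cell $n \in \mathcal{C}(\pi, \pi') = \mathcal{A}(f\pi, f\pi')$, $d(n)$ is the lift of $n$ along $h'$ provided by condition (3) of $h'$ being a fibration. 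That $h'$ is of set hlevel forces any two such lifts to be connected by a 3-cell, and this in turn implies that $d$ is unique up to fibrewise homotopy over $A$.

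The main obstacle I expect is the bookkeeping needed to show that $P_A(C) \to C \times_A C$ really is a propositional fibration in the precise sense of \refdefi{hlevels} and that uniqueness of $d$ holds up to \emph{fibrewise} homotopy: both rely on threading the functoriality witnesses of $f, g, g'$ through the lifting structure supplied by the ``set'' condition on $h'$. As with the analogous \refprop{existofproptrinEFF1}, I would state the construction explicitly and leave these essentially routine but lengthy verifications to the reader.
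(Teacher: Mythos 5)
Your construction is exactly the one in the paper: $C := (B, \beta, \mathcal{C})$ with $\mathcal{C}(b,b') = \mathcal{B}(b,b')$ and $\mathcal{C}(\pi,\pi') = \mathcal{A}(f\pi,f\pi')$, so the proposal takes essentially the same approach. The paper stops at stating this construction, while you additionally sketch the (routine) verifications that $h$ is a set-level fibration and that the universal property holds, which is consistent with the paper's intent.
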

\begin{proof}
Let me again just give the construction: if $f: (B, \beta, \mathcal{B}) \to (A, \alpha, \mathcal{A})$ is a fibration, then we define $C := (B, \beta, \mathcal{C})$ with $\mathcal{C}(b,b') = \mathcal{B}(b,b')$ and $\mathcal{C}(\pi,\pi')=\mathcal{A}(f\pi,f\pi')$. 
\end{proof}

Note that this means that any fibration of sets $f: (B, \beta, \mathcal{B}) \to (A, \alpha, \mathcal{A})$ can be assumed to satisfy $\mathcal{B}(\pi,\pi') = \mathcal{A}(f\pi,f\pi')$; if $f$ is also discrete, we may even assume that $B \subseteq A \times \mathbb{N}$ with $f$ being the first and $\beta$ being the second projection. This means that the category of discrete sets in $\mathbb{EFF}_1$ looks essentially as follows.

Its objects are:
\begin{enumerate}
\item A subset $A$ of $\mathbb{N}$.
\item For each pair of elements $a,a' \in A$ a subset $\mathcal{A}(a,a')$ of $\mathbb{N}$.
\item A function which computes for any $a \in A$ an element in $\mathcal{A}(a,a)$.
\item A function which given $a,a'$ and $\pi \in \mathcal{A}(a,a')$ computes an element $\pi^{-1} \in \mathcal{A}(a'a)$.
\item A function which given $a,a',a'' \in A$ and $\pi \in \mathcal{A}(a,a'), \pi' \in \mathcal{A}(a',a'')$ computes an element $\pi' \circ \pi \in \mathcal{A}(a,a'')$.
\end{enumerate}
We will often just write $(A, \mathcal{A})$ for these objects. A morphism $f: (B, \mathcal{B}) \to (A, \mathcal{A})$ between such objects consists of:
\begin{enumerate}
\item A computable function $f: B \to A$.
\item For each $b, b' \in B$ a function $f_{(b, b')}: \mathcal{B}(b,b') \to \mathcal{A}(fb, fb')$ such that $f_{(b,b')}(\pi)$ can be computed from $b,b'$ and $\pi$.
\end{enumerate}

We will denote this category by $\mathbb{DISC}$. Note that $\mathbb{DISC}$ is a small category and it is also, essentially, the subcategory of discrete objects in $\mathbb{EFF}$. In fact, because the map $PA \to A \times A$ is always discrete in $\mathbb{EFF}$, the category $\mathbb{DISC}$ inherits a path category structure from $\mathbb{EFF}$. 

\begin{theo}{discrsetsclassifiedinEFF2}
In $\mathbb{EFF}_1$ the fibrations of discrete sets form an impredicative class of small fibrations with a univalent representation.
\end{theo}
\begin{proof}
It follows from \refprop{discrfibrimprclassofsmallmapsagain} and \reftheo{hlevelsandPi} that the fibrations of discrete sets form an impredicative class of small fibrations in $\mathbb{EFF}_1$. That means that it remains to construct a univalent representation $\pi: E \to U$.
\begin{itemize}
\item The elements of $U$ are the objects $(A, \mathcal{A})$ of $\mathbb{DISC}$.
\item A realizer for any such object $(A, \mathcal{A})$ is 0.
\item The 1-cells are coded homotopy equivalences $(f,g,H, K)$ (in the sense of $\mathbb{DISC}$ and $\mathbb{EFF}$). 
\item The 2-cells between $(f,g,H,K)$ and $(f',g',H',K')$ consist of homotopies $U: f \simeq f'$.
\end{itemize}
In addition:
\begin{itemize}
\item The elements of $E$ are $(A, \mathcal{A}, a)$, where $(A, \mathcal{A})$ is an object of $\mathbb{DISC}$ and $a \in A$.
\item A realizer for such a thing is just $a$.
\item The 1-cells between $(A, \mathcal{A}, a)$ and $(B, \mathcal{B}, b)$ consist of a coded homotopy equivalence $(f,g,H,K): (A, \mathcal{A}) \to (B, \mathcal{B})$ and a 1-cell in $\mathcal{B}(fa,b)$.
\item The 2-cells in $E$ are as in $U$.
\end{itemize}
One can easily check that the projection $\pi: E \to U$ is a fibration of discrete sets, so it remains to verify that it is a univalent representation. So let $f: (B, \beta, \mathcal{B}) \to (A, \alpha, \mathcal{A})$ be a fibration of discrete sets with $\mathcal{B}(\pi,\pi') = \mathcal{A}(f\pi,f\pi')$, $B \subseteq A \times \mathbb{N}$, and $f$ the first and $\beta$ the second projection. Then for each $a \in A$ the fibre $B_a$ can be regarded as an object of $\mathbb{DISC}$ as between every two parallel 1-cells in $B$ there is a unique 2-cell. This means that the classifying map $A \to U$ can be defined on 0-cells by sending $a \in A$ to $B_a$. In addition, every $\pi: a \to a'$ induces an equivalence $\Gamma^B_\pi: B_a \to B_a'$ by \refprop{transportisanequivalence}, while every $n: \pi \Rightarrow \pi'$ induces a homotopy $\Gamma^B_\pi \simeq \Gamma^B_{\pi'}$ by \refprop{basicpropoftransport}.(2); this completes the definition of $A \to U$. The properties of transport imply that this map is a morphism in $\mathbb{EFF}_1$.

Finally, if $f, g: A \to U$ are two maps and $w: f^*\pi \to g^*\pi$ is an equivalence, then for each $a \in A$ the objects $f^{-1}(a)$ and $g^{-1}(a)$ are equivalent in $\mathbb{DISC}$ by $w_a$: using the definition of the path objects in $\mathbb{EFF}_1$ in \reflemm{pathobjects}, one can show that this induces a morphism $H: A \to PU$ which in turn induces an equivalence $f^{-1}(a) \to g^{-1}(a)$ which is fibrewise homotopic to $w$. In other words, $\pi$ is univalent.
\end{proof}

It is impossible to show that the class of discrete fibrations in $\mathbb{EFF}_1$ has a univalent representation in view of the following facts.

\begin{prop}{everythinggrpinEFF2}
In $\mathbb{EFF}_1$ every fibration is a fibration of groupoids.
\end{prop}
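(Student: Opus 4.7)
The plan is to mirror the argument of \refprop{inEFFeverythingset} one hlevel higher, exploiting the fact that parallel 2-cells in $\mathbb{EFF}_1$ are connected by a unique ``invisible'' 3-cell, in exactly the way that parallel 1-cells in $\mathbb{EFF}$ are connected by a unique invisible 2-cell.

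First, for a fibration $f: (B, \beta, \mathcal{B}) \to (A, \alpha, \mathcal{A})$ I would construct an explicit fibrewise path object $P_A(B)$ for $f$ in the slice $\mathbb{EFF}_1(A)$, mimicking \reflemm{pathobjects}: the underlying set consists of triples $(b,b',\rho)$ with $f(b)=f(b')$ and $\rho \in \mathcal{B}(b,b')$; a 1-cell $(b_0,b_0',\rho_0) \to (b_1,b_1',\rho_1)$ is a triple $(\mu,\nu,n)$ with $\mu \in \mathcal{B}(b_0,b_1)$, $\nu \in \mathcal{B}(b_0',b_1')$ and $n \in \mathcal{B}(\nu\rho_0,\rho_1\mu)$; a 2-cell is a pair of 2-cells in $B$. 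The diagonal $B \to B \times_A B$ factors as the evident equivalence $r(b) = (b,b,1_b)$ followed by the projection $(s,t): P_A(B) \to B \times_A B$, which is a fibration in view of the three lifting conditions of \refdefi{fibrationinEFF1} applied to $f$.

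To conclude that $f$ is a fibration of 1-types it then suffices, by the definition of fibrations of $n$-types, to show that $(s,t)$ is a fibration of sets. For this I would construct its own fibrewise path object $Q := P_{B \times_A B}(P_A(B))$ explicitly, whose elements are quintuples $(b,b',\rho,\rho',n)$ with $f(b)=f(b')$, $\rho,\rho' \in \mathcal{B}(b,b')$ and $n \in \mathcal{B}(\rho,\rho')$, and whose 1-cells between two such quintuples are determined by a pair of 1-cells in $B$ together with coherence 2-cells, with \emph{no} relations imposed on the 2-cell component $n$. The projection $Q \to P_A(B) \times_{B \times_A B} P_A(B)$ is then propositional by the exact analog of the argument in \refprop{inEFFeverythingset}: its own path object is trivial because two quintuples sharing $(b,b',\rho,\rho')$ are freely connected by 1-cells in $Q$ whose data does not depend on $n$ at all, which is ultimately a reflection of the fact that parallel 2-cells in $B$ live in a codiscrete layer. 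Hence $(s,t)$ is a fibration of sets and $f$ is a fibration of groupoids.

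The main obstacle is bookkeeping rather than conceptual: one has to verify that $P_A(B)$ and $Q$ really carry all the required identity, composition, inverse, unit, associativity and 2-cell data of objects in $\mathbb{EFF}_1$, together with their computable trackings, and that the projections satisfy all three clauses of \refdefi{fibrationinEFF1}. The third clause, on lifting 2-cells, plays the role that the second fibration clause of $\mathbb{EFF}$ played in \reftheo{EFFisapathcat} and \refprop{inEFFeverythingset}, and is invoked repeatedly. Modulo that verification no new ideas are needed.
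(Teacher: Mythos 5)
Your argument is essentially the paper's own: the proof there simply notes that the explicit description of (fibrewise) path objects in \reflemm{pathobjects} exhibits $P_A(B) \to B \times_A B$ as a fibration of (discrete) sets, because the 2-cells of the path object ignore the filler component --- exactly the one-level-up collapse you exploit, and your verification of ``fibration of sets'' via a propositional next path object mirrors \refprop{inEFFeverythingset} just as the paper intends. One small repair: the 1-cells $(\mu,\nu,n)$ of your $P_A(B)$ must in addition satisfy $f(\mu)=f(\nu)$, i.e.\ $(\mu,\nu)$ must actually be a 1-cell of $B\times_A B$, since otherwise $(s,t)$ is not a morphism of $\mathbb{EFF}_1$; with that fix the remaining work is the routine bookkeeping you describe.
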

\begin{proof}
It is clear from the description of path objects in \reflemm{pathobjects} that $PA \to A \times A$ is always a fibration of (discrete) sets. Since the description of fibrewise path objects $P_I(A) \to A \times_I A$ for fibrations $A \to I$ is similar, we see that every fibration is a fibration of groupoids.
\end{proof}

\begin{coro}{nounivreprfordiscrfibrinEFF1}
The class of discrete fibrations in $\mathbb{EFF}_1$ does not have a univalent representation.
\end{coro}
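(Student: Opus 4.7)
The argument is the analogue of \refcoro{nounivreprfordiscrfibr} shifted up by one hlevel. If $\pi: E \to U$ were a univalent representation of the class of discrete fibrations in $\mathbb{EFF}_1$, then $U \to 1$ would be a fibration, hence a fibration of groupoids by \refprop{everythinggrpinEFF2}; so $U$ is itself a 1-type. A standard consequence of univalence (compare \cite{kraussattler15}) is that, for each discrete object $A$ classified by some $c_A: 1 \to U$, there is an equivalence in the homotopy category between the self-equivalence space ${\rm Equiv}(A, A)$ and the loop space $\Omega_{c_A}(U)$, obtained as the fibre of $(s,t): PU \to U \times U$ over $(c_A, c_A)$. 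Since $U$ is a 1-type this fibre is a 0-type, so ${\rm Equiv}(A, A)$ must itself be a 0-type (a ``set'') for every discrete $A$.

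The plan is therefore to exhibit a single discrete object of $\mathbb{EFF}_1$ whose self-equivalence space is visibly not a 0-type. A natural candidate is the classifying groupoid of $\mathbb{Z}/2$: take $A$ with a single 0-cell $*$ (realizer $0$), two 1-cells $\iota, \tau \in \mathcal{A}(*, *)$ (realizers $0$ and $1$) subject to $\tau \circ \tau = \iota$, and only identity 2-cells. All the witness 2-cells demanded in the definition of an object of $\mathbb{EFF}_1$ reduce to identities, so $A$ is a well-defined object; and since $A$ has only one 0-cell, $A \to 1$ is trivially a standard discrete fibration.

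A short inspection shows that $\mathrm{id}_A$ is the only self-equivalence of $A$ up to homotopy: the one other functor, sending $\tau$ to $\iota$, fails to admit any homotopy to the identity because the required naturality square at $\tau$ would force $\iota = \tau$. However $\mathrm{id}_A$ admits two distinct self-homotopies $H_0, H_1$ given by $H_i(*) = \iota$ and $\tau$ respectively, with naturality squares filled by identity 2-cells using the commutativity of $\mathbb{Z}/2$. No 2-homotopy connects $H_0$ and $H_1$, since such a thing would have to provide a 2-cell $\iota \Rightarrow \tau$ in $A$, and $\mathcal{A}(\iota, \tau) = \emptyset$. Consequently ${\rm Equiv}(A, A)$ has parallel non-equal self-paths at $\mathrm{id}_A$ and thus cannot be a 0-type, yielding the required contradiction.

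The main potential obstacle is the precise derivation of the equivalence ${\rm Equiv}(A, A) \simeq \Omega_{c_A}(U)$ from the path-categorical formulation of univalence in \refdefi{univalence}; but as mentioned in \refrema{onunivalence}, this is standard and follows by applying the defining condition to the pair $(c_A, c_A): 1 \to U \times U$ and unpacking. The remaining verifications concerning the object $A$ and its self-maps are entirely elementary inspections of the definitions in Section~8.
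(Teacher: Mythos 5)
Your proposal is correct and is essentially the paper's own argument: both run the Kraus--Sattler obstruction one hlevel up, using \refprop{everythinggrpinEFF2} to conclude that a univalent $U$ would have to be a groupoid, and then exhibiting a finite discrete object built from $\mathbb{Z}/2$ with only identity 2-cells whose self-equivalence data is too rich for a groupoid universe. The differences are only cosmetic: the paper uses a two-point object with a non-identity self-equivalence $w$ and two non-homotopic homotopies $1 \simeq w$, phrasing the contradiction directly in terms of loops at the classifying point of $U$, whereas you use the one-object delooping of $\mathbb{Z}/2$ with two non-homotopic self-homotopies of the identity, routed through ${\rm Equiv}(A,A) \simeq \Omega_{c_A}(U)$ --- an appeal to the ``standard consequence of univalence'' that is at the same level of informality as the paper's own argument.
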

\begin{proof}
If a class of small fibrations has a univalent representation $\pi: E \to U$ with $U$ a groupoid, then for every small object $A$, self-equivalence $w: A \simeq A$ and homotopies $H, K: 1 \simeq w$, we must have that $H \sim K$. For otherwise there would be a loop in the universe which is homotopic to the constant loop in two different ways, which is impossible if the universe is a groupoid. 

So consider the following discrete object $(A, \alpha, \mathcal{A})$ in $\mathbb{EFF}_1$: 
\begin{itemize}
 \item The set $A$ of 0-cells is $\{ 0, 1 \}$.
 \item The map $\alpha$ is the identity.
 \item The set $\mathcal{A}(i,j) = \{ 0, 1 \}$ for all 0-cells $i, j$.
 \item The set $\mathcal{A}(\pi,\pi')$ for parallel 1-cells $\pi,\pi'$ is $\{ 0 \}$ if $\pi = \pi'$ and empty otherwise.
 \item The identity 1- and 2-cells are 0.
 \item Composition of 1-cells is addition modulo 2.
\end{itemize}
This object has an endomorphism $w: A \to A$ which is obtained by sending 0-cells to themselves and 1-cells $\pi \in \mathcal{A}(i,j)$ to themselves if $i = j$ and to $1-\pi$ if $i \not= j$; because $w^2 = 1$, the map $w$ is an isomorphism and hence a self-equivalence.

There are two homotopies $H,K: 1 \simeq w$ obtained by putting $H_i = i: i \to i$ and $K_i = 1-i: i \to i$. These homotopies $H$ and $K$ are not homotopic to each other because $A$ only has identity 2-cells and the homotopies $H$ and $K$ are distinct.
\end{proof}

\begin{rema}{reprinEFF1}
 It is unclear to me if the class of discrete fibrations in $\mathbb{EFF}_1$ has a non-univalent representation.
\end{rema}

\section{Conclusion and directions for future research}

The contents of this paper can be summarised by the following two theorems:

\begin{theo}{summth0}
There exists a path category with homotopy $\Pi$-types $\mathbb{EFF}$ whose homotopy category is equivalent to the effective topos. Within this path category one can identify a class of small fibrations (\emph{viz.}, the ones which are discrete and propositional) which is impredicative and has a univalent representation. In addition, propositional resizing holds.
\end{theo}

\begin{theo}{summth1}
There exists a path category with homotopy $\Pi$-types $\mathbb{EFF}_1$ in which there is a class of small fibrations (\emph{viz.}, the fibrations of discrete sets) which is impredicative and has a univalent representation. In addition, propositional resizing holds.
\end{theo}

It is not entirely clear when two path categories should be considered equivalent, but one would expect that if one had such a notion, the path category of sets in $\mathbb{EFF}_1$ would be equivalent as a path category to $\mathbb{EFF} = \mathbb{EFF}_0$. Indeed, as I already suggested in the introduction, one would expect that there would be path categories $\mathbb{EFF}_n$ such that the $n$-types in $\mathbb{EFF}_{n+1}$ would be equivalent as a path category to $\mathbb{EFF}_n$. Ultimately, there should be a path category $\mathbb{EFF}_\infty$ such that each $\mathbb{EFF}_n$ is equivalent to the path category of $n$-types in $\mathbb{EFF}_\infty$. For this path category $\mathbb{EFF}_\infty$ one should have the following:

\begin{conj}{effinfty}
There exists a path category with homotopy $\Pi$-types $\mathbb{EFF}_\infty$ in which there is a class of small fibrations, the discrete ones, which is impredicative and has a univalent representation. These discrete fibrations can be defined using an internal orthogonality condition and for these discrete fibrations propositional resizing holds.
\end{conj}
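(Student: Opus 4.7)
The plan is to construct $\mathbb{EFF}_\infty$ by pushing the pattern of $\mathbb{EFF}$ and $\mathbb{EFF}_1$ to infinite dimension. An object should be a ``locally codiscrete realized $\infty$-groupoid'': a set $A$ equipped with a realizer $\alpha: A \to \mathbb{N}$ and, for each pair $a, a' \in A$, a further object $\mathcal{A}(a, a')$ of the same kind, together with computable partial functions producing identities, composites, inverses and all coherence witnesses (associators, unitors, and so on), where the witnesses for laws at dimension $n$ live in the hom-objects at dimension $n{+}1$. Formally, this is a coinductive definition, and one would phrase it either directly by corecursion or as algebras for a suitable Grothendieck $\infty$-groupoid monad (in the spirit of \cite{bergmoerdijk18,bourke16,vandenberggarner11}) internalised in the category of assemblies. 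Morphisms, fibrations and homotopies are defined by the same recursive scheme that produced their counterparts in $\mathbb{EFF}_1$, and the path category structure is built exactly as in \reftheo{EFFisapathcat} and \reftheo{EFF1pathcategory}, with path objects $PA$ whose hom-objects are themselves constructed recursively from the hom-objects of $A$. Homotopy exponentials and $\Pi$-types are obtained by taking the entire tower of trackings (at every dimension) as part of the data of a section.

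Once $\mathbb{EFF}_\infty$ is a path category with homotopy $\Pi$-types, I would define standard discrete fibrations by the same 0-dimensional condition used in $\mathbb{EFF}$ and $\mathbb{EFF}_1$; the argument of \refprop{discrfibrimprclassofsmallmapsagain} then yields closure under pullback, composition and $\Pi_f$. The orthogonality characterisation against the codiscrete two-element object $\mathbb{J}$ should carry over verbatim from \refprop{discretealafreydinEFF1}, since discreteness only concerns behaviour at dimensions 0 and 1, and propositional resizing follows from the argument of \refprop{inZFC}. For the univalent representation $\pi: E \to U$, the natural candidate has $U$ consisting of all realized $\infty$-groupoid structures on subsets of $\mathbb{N}$ --- cells at every dimension are genuine subsets of $\mathbb{N}$ with no further identification --- with 1-cells being equivalences, 2-cells being homotopies of equivalences, and so on indefinitely; $E$ adds the choice of an element of the underlying set. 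Univalence is then proven by generalising \reftheo{discrsetsclassifiedinEFF2}: paths in $U$ should correspond, up to homotopy, to equivalences between its elements at every level.

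The main obstacle is foundational. One has to make the coinductive definition of ``$\mathbb{EFF}_\infty$-object'' simultaneously precise, size-controlled and workable. In particular, the universe $U$ must itself be an $\mathbb{EFF}_\infty$-object, which forces one to organise equivalences, homotopies, homotopies of homotopies, and so forth, into one recursively specified structure without circularity or size paradoxes; the univalence proof now requires that $PU$ encode an infinite tower of coherent equivalence data; and the trackings witnessing coherence must be produced uniformly across all dimensions. I suspect the cleanest route is to build $\mathbb{EFF}_\infty$ from a globular or semi-simplicial operad of realized cells, along the lines of the algebraic $\infty$-groupoid constructions in \cite{vandenberggarner11,bourke16} enriched over assemblies: this would package all dimensions simultaneously and make the expected equivalences between $\mathbb{EFF}_n$ and the path category of $n$-types in $\mathbb{EFF}_\infty$ tractable.
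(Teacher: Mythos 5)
This statement is a \emph{conjecture} in the paper: the author offers no proof of it, and explicitly flags the two ingredients that block one --- it is open whether the (locally defined) discrete maps have a representation at all (already in $\Eff$ and in $\mathbb{EFF}$, see the opening of the section on propositions in $\mathbb{EFF}$ and the remark after the corollary on $\mathbb{EFF}_1$), and the passage to $\infty$ is exactly where the finite-level constructions stop being routine. Your proposal is a reasonable research programme, and it is the same programme the paper itself sketches (iterate the $\mathbb{EFF}_n$ construction and pass to a limit), but it is not a proof: every step that makes the statement a conjecture rather than a theorem is deferred. Concretely: (i) you do not give a workable definition of the objects --- ``all coherence witnesses produced computably at every dimension'' is precisely the infinite coherence problem, and the paper's own Remark on the last two conditions in the definition of $\mathbb{EFF}_1$ shows that even at level $1$ one must make delicate choices about which coherence data is part of the identity of a morphism for the result to be a category at all; (ii) the path category axioms are not free --- in $\mathbb{EFF}_1$ the characterisation of trivial fibrations needed the adjoint-equivalence improvement (the analogue of Theorem 4.2.3 of the HoTT book), and at level $\infty$ one needs a fully coherent version of this, which is genuinely new work; (iii) the homotopy $\Pi$-types in $\mathbb{EFF}_1$ already required transport data $\Gamma_\pi$ and $\Gamma_n$ at two levels, and in $\mathbb{EFF}_\infty$ a section must carry a coherent infinite tower of such data, so ``take the entire tower of trackings'' is a statement of the problem, not a solution.

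The most serious gap is the universe. You cannot argue univalence of your proposed $\pi: E \to U$ by saying it ``generalises'' the set-level case: in $\mathbb{EFF}$ and $\mathbb{EFF}_1$ the representation theorems work because, after truncation, discrete propositional fibrations (resp.\ fibrations of discrete sets) can be normalised to a very rigid form ($B \subseteq A \times \mathbb{N}$ with prescribed hom-sets), and the universe only has to record equivalences up to the relevant truncation level. For untruncated discrete fibrations no such normal form is available, $U$ must itself be a legitimate $\mathbb{EFF}_\infty$-object whose path object encodes \emph{all} higher equivalence data coherently, and showing that every discrete fibration is a homotopy pullback of $\pi$ is essentially the open representation problem the paper points to. Likewise, ``the orthogonality characterisation carries over verbatim'' is not obvious: the $\mathbb{EFF}_1$ version (and the $\mathbb{EFF}$ version) is proved only under {\bf ZFC} and its third clause already involves choice of fillers one dimension up; at level $\infty$ the corresponding condition involves fillers at all dimensions and needs a new argument. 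So the proposal correctly identifies the intended construction and its obstacles, but it establishes none of the claims in the conjecture; as it stands the conjecture remains open.
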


In addition, the following problems remain.
\begin{description}
\item[Models of CoC and coherence issues] The categories $\mathbb{EFF}$ and $\mathbb{EFF}_1$ come close to being models of some version of CoC, but fall short in two ways. First of all, the version of CoC we have in mind is one in which all definitional equalities have been replaced by propositional ones. It remains to make this more precise and in particular write down a syntax for this version of CoC. Secondly, we should address coherence issues and formulate these categories as categories with families \cite{dybjer96,hofmann97b} or some other standard notion of model of type theory.
\item[Choice issues] We have used the axiom of choice in the proof of propositional resizing and in the proof of the fact that the discrete fibrations can be characterised by an internal orthogonality condition (see \refprop{discretealafreydinEFF} and \refprop{resizinginEFF} for the case of $\mathbb{EFF}$). It seems likely that both applications of the axiom of choice can be eliminated by changing the definition of $\mathbb{EFF}$ slightly. The idea would be to say that every object $(A, \alpha, \mathcal{A})$ in $\mathbb{EFF}$ should also come equipped with a section of the surjection $\alpha: A \to {\rm Im}(\alpha)$, but that morphisms are as before (so they do not have to preserve this additional structure). For the fibres of fibrations one should do something similar, so it might be that this would work best in the setting of categories with families. We leave the question whether this would give us a choice-free model of CoC to future work.
\end{description}

\bibliographystyle{plain} \bibliography{hSetoids}

\end{document}